\DeclareMathAlphabet{\altmathcal}{OMS}{cmsy}{m}{n}
\newtheorem{theorem}{Theorem}[section]
\newtheorem{headthm}{Theorem}
\newaliascnt{headcor}{headthm}
\newtheorem{headcor}[headcor]{Corollary}
\newaliascnt{headconj}{headthm}
\newaliascnt{corollary}{theorem}
\newtheorem{corollary}[corollary]{Corollary}
\newaliascnt{claim}{theorem}
\newaliascnt{lemma}{theorem}
\newtheorem{lemma}[lemma]{Lemma}
\newaliascnt{conjecture}{theorem}
\newaliascnt{proposition}{theorem}
\newtheorem{proposition}[proposition]{Proposition}
\theoremstyle{definition}
\newaliascnt{definition}{theorem}
\newaliascnt{notation}{theorem}
\newaliascnt{example}{theorem}
\newtheorem{example}[example]{Example}
\newaliascnt{examples}{theorem}
\newaliascnt{remark}{theorem}
\newtheorem{remark}[remark]{Remark}
\newaliascnt{fact}{theorem}
\newaliascnt{question}{theorem}
\newtheorem{question}[question]{Question}
\newaliascnt{questions}{theorem}
\newaliascnt{problem}{theorem}
\newaliascnt{construction}{theorem}
\newaliascnt{setup}{theorem}
\newaliascnt{algorithm}{theorem}
\newaliascnt{observation}{theorem}
\newaliascnt{discussion}{theorem}
\newaliascnt{defprop}{theorem}
\def\equationautorefname~#1\null{(#1)\null}
\def\sectionautorefname~#1\null{Section #1\null}
\def\subsectionautorefname~#1\null{\S #1\null}
\def\trdeg{{\rm trdeg}}
\definecolor{myorange}{RGB}{255, 160, 70}
\newcommand{\stkout}[1]{\ifmmode\text{\sout{\ensuremath{#1}}}\else\sout{#1}\fi}
\def \height{{\operatorname{ht}}}
\def \Spec{{\operatorname{Spec}}}
\def \Proj{{\operatorname{Proj\, }}}
\def\Supp{\operatorname{Supp\, }}
\def \trdeg{{\operatorname{trdeg}}}
\def \divv{{\operatorname{div}}}
\DeclareMathOperator{\Quot}{Quot}
\DeclareMathOperator{\Image}{Image}
\DeclareMathOperator{\Div}{Div}
\DeclareMathOperator{\Bs}{Bs}
\DeclareMathOperator{\sBL}{sBL}
\DeclareMathOperator{\BL}{BL}
\def \f1{\mathbf{1}}
\newcommand{\kk}{\mathbb{k}}
\def\xi{x}
\def\ls{\leqslant}
\def\gs{\geqslant}
\def\fm{\mathfrak{m}}
\def\fm{\mathfrak{m}}
\def\m{\mathfrak{m}}
\def \PP{\mathbb P}
\def \CC{\mathbb C}
\def \QQ{\mathbb Q}
\def \RR{\mathbb R}
\def \NN{ {\mathbb Z}_{\gs 0}}
\def \ZZ{\mathbb Z}
\def \II{\mathbb I}
\def \O{\mathcal O}
\def \M{\mathcal M}
\def \L{\mathcal L}
\def \O{\mathcal O}
\begin{document} 


\title[Degree functions of  graded families of ideals]{Degree functions of  graded families of ideals}

\author[Steven Dale Cutkosky]{Steven Dale Cutkosky$^1$}
\address{Steven Dale Cutkosky\\
	Department of Mathematics, University of Missouri, Columbia,
	MO 65211, USA. \emph{Email:} {\rm cutkoskys@missouri.edu}}
\thanks{$^{1}$ The first author was partially funded by NSF Grant DMS \#2348849.}

\author[Jonathan Monta{\~n}o]{Jonathan Monta{\~n}o$^2$}
\address{Jonathan Monta{\~n}o\\School of Mathematical and Statistical Sciences, Arizona State University, P.O. Box 871804, Tempe, AZ 85287-18041, USA. \emph{Email:} {\rm montano@asu.edu}}
\thanks{$^{2}$ The second author was partially funded by NSF Grant DMS \#2401522.}

\begin{abstract}

We express multiplicities and degree functions of graded families of $\fm_R$-primary ideals in an excellent normal local ring  $(R,\fm_R)$ as  limits of intersection products. Moreover, in dimension 2, we show more refined results  for  divisorial filtrations. Finally, also in dimension 2, we give an example of  a non-Noetherian divisorial filtration $\{I_n\}_{n\gs 0}$  of $\fm_R$-primary ideals  such that the union of all the sets of Rees valuations of all the $I_n$ is a finite set, and another example of a (necessarily non-Noetherian) divisorial filtration of $\fm_R$-primary ideals such that the set of all  Rees valuations  is infinite.

\end{abstract}

\keywords{}
\subjclass[2020]{Primary: 13H15, 14C17.}

\maketitle

\section{Introduction}

Let $(R,\fm_R,\kk)$ be a $d$-dimensional Noetherian local ring. In \cite[\S 2]{CM} the authors of this paper developed an intersection product $(-)_R$ on schemes that are proper and birational over $R$, see \autoref{subs:int_prod}. In \cite{CM}  this intersection product is used to provide geometric formulas for Hilbert-Samuel multiplicities and degree functions of ideals.   The goal of this paper is to extend these formulas to the setting of graded families of $\fm_R$-primary ideals, with a special focus on divisorial filtrations. We refer the reader to \cite[\S 2]{CM} for the notation on cycles and more details about the results on the intersection product that we use in this paper. This notation is consistent with that of \cite{Thor} and \cite{Fl}. 

Let $e(I)$ denote the (Hilbert-Samuel) multiplicity of  an $\fm_R$-primary ideal $I$. In \cite{Re},\cite{Re2}  Rees proved a celebrated theorem on the behavior of multiplicities under hyperplane sections. 
Assume $R$ is an analytically unramified  local  domain. The {\it degree function} $d_I: R\setminus\{0\}\to \NN$ is given by $d_I(x)=e(I(R/xR))$ for $x\ne 0$. In \cite[\S 2]{Re}, \cite[Theorem 9.31]{Re2} it is proved that $d_I$ can be expressed as
\begin{equation}\label{eq:Rees_Thmmm}
	d_I(x)=\sum_{i=1}^rd_i(I)v_i(x),
\end{equation}
where $v_1,\ldots, v_r$ are the Rees valuations of $I$ and $d_i(I)\in \ZZ_{\gs 0}$ for all $i$; Rees attributes to  Samuel   the beginning of the theory of degree functions \cite{Sam}.

In \cite{CM}, the authors prove the following theorem about multiplicities and degree functions. 
\begin{theorem}[{\cite[Theorem A, Corollary C]{CM}}]\label{TheoremCMAC}
	Let $(R,\fm_R,\kk)$ be a $d$-dimensional Noetherian local ring  and $I$ be an $\m_R$-primary ideal.
	\begin{enumerate}
		\item[{\rm (1)}] Let $\pi:Y\rightarrow \Spec(R)$ be a birational proper morphism such that $I\mathcal O_Y$ is invertible. Then 
$$
			e(I)=-((I\mathcal O_Y)^d)_R.
$$
\item[{\rm (2)}] Moreover, if $R$ is an analytically unramified  domain  and $\pi:Y=\Proj(\oplus_{n\gs 0}\,\overline{I^n})\rightarrow\Spec(R)$  is the natural projective morphism with  $\mathcal L_Y :=I\mathcal O_Y$, then for any $0\ne x\in R$ we have
$$
e(I(R/xR))=\sum_{E} v_E(x)(\mathcal L_Y^{d-1}\cdot E)_{R},
$$
where  $E$ ranges over the  integral components of $\pi^{-1}(\fm_R)$.   The local rings $\mathcal O_{Y,E}$ are discrete valuation rings and the canonical valuations $v_E$  are the Rees valuations of $I$. 
	\end{enumerate}
\end{theorem}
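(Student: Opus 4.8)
The plan is to realize both $e(I)$ and $-((I\mathcal O_Y)^d)_R$ as the normalized leading coefficient of a single numerical polynomial in $n$. Write $\mathcal L:=I\mathcal O_Y=\mathcal O_Y(-D)$ for an effective Cartier divisor $D$ supported on $\pi^{-1}(\fm_R)$. First I would study the coherent sheaves $\mathcal O_Y/\mathcal L^n=\mathcal O_Y/I^n\mathcal O_Y$: they are supported on the closed fiber, so each $R^i\pi_*(\mathcal O_Y/\mathcal L^n)$ has finite length over $R$, and the relative Euler characteristic $P(n):=\sum_i(-1)^i\length_R R^i\pi_*(\mathcal O_Y/\mathcal L^n)$ agrees, for $n\gg0$, with a polynomial of degree $d$. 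By the asymptotic Riemann--Roch/Snapper formalism that defines the intersection product $(-)_R$ of \cite[\S 2]{CM}, the leading coefficient of $P(n)$ is $-((I\mathcal O_Y)^d)_R/d!$; pinning down this sign, via $\mathcal L=\mathcal O_Y(-D)$ and $(\mathcal L^d)_R=(-1)^d(D^d)_R$, is the first thing to verify.

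Next I would compare $P(n)$ with $\length_R(R/I^n)$, whose leading term is $e(I)/d!$ by the definition of the Hilbert--Samuel multiplicity. Since $\pi$ is proper and birational, $\pi_*\mathcal O_Y$ differs from $R$ by a module of finite length and the sheaves $R^i\pi_*\mathcal O_Y$ with $i>0$ have finite length; applying $\pi_*$ to $0\to\mathcal L^n\to\mathcal O_Y\to\mathcal O_Y/\mathcal L^n\to0$ then identifies $P(n)$ with $\length_R(R/\pi_*\mathcal L^n)$ up to a term that is $o(n^d)$. Finally $\pi_*\mathcal L^n$ is squeezed between $I^n$ and $\overline{I^n}$, and $\length_R(\overline{I^n}/I^n)=o(n^d)$ because the multiplicity is unchanged under integral closure; hence $P(n)$ and $\length_R(R/I^n)$ have equal leading coefficients, giving $e(I)=-((I\mathcal O_Y)^d)_R$. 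The main obstacle in Part~(1) is precisely this error bookkeeping: showing that the higher direct images and the gap between $\pi_*\mathcal L^n$ and $I^n$ contribute only at order $o(n^d)$.

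\textbf{Part (2).} Here $Y=\Proj(\oplus_n\overline{I^n})$ is the normalized blowup, so $Y$ is normal, each local ring $\mathcal O_{Y,E}$ is a discrete valuation ring with valuation $v_E$, and by the standard description of the normalized blowup these $v_E$ are exactly the Rees valuations of $I$. Fix $0\ne x\in R$ and let $\tilde X\subseteq Y$ be the strict transform of $V(x)$. The plan is to apply Part~(1) to the $(d-1)$-dimensional local ring $R/xR$ through the proper birational model $\tilde X\to\Spec(R/xR)$, on which $I\mathcal O_{\tilde X}=\mathcal L_Y|_{\tilde X}$ is invertible; this gives $e(I(R/xR))=-((\mathcal L_Y|_{\tilde X})^{d-1})_{R/xR}$, which by the projection formula equals $-(\mathcal L_Y^{d-1}\cdot[\tilde X])_R$.

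It remains to compute the cycle $[\tilde X]$. Because $x$ is a nonzerodivisor and $\pi$ is birational, the principal divisor of $x$ on $Y$ decomposes as $\divv_Y(x)=[\tilde X]+\sum_E v_E(x)\,E$, the sum running over the integral components $E$ of $\pi^{-1}(\fm_R)$. Since $\divv_Y(x)$ is principal it is rationally equivalent to zero, so $(\mathcal L_Y^{d-1}\cdot\divv_Y(x))_R=0$; subtracting the exceptional part yields $(\mathcal L_Y^{d-1}\cdot[\tilde X])_R=-\sum_E v_E(x)(\mathcal L_Y^{d-1}\cdot E)_R$, whence $e(I(R/xR))=\sum_E v_E(x)(\mathcal L_Y^{d-1}\cdot E)_R$. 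This is Rees's formula \eqref{eq:Rees_Thmmm} with $d_E(I)=(\mathcal L_Y^{d-1}\cdot E)_R$. The delicate points are verifying that $\tilde X\to\Spec(R/xR)$ is genuinely birational so that Part~(1) applies, that the projection formula holds in this relative intersection theory, and that the total transform splits exactly as claimed.
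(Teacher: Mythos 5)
This theorem is not proved in the present paper; it is imported verbatim from \cite[Theorem A, Corollary C]{CM}, so there is no internal proof to compare against. Judged on its own terms, your Part (2) is essentially the expected argument and is sound in outline: decompose the principal divisor $\divv_Y(x)$ into its exceptional part $\sum_E v_E(x)E$ and the strict transform, use that a principal Cartier divisor acts trivially on Chow classes so that $(\mathcal L_Y^{d-1}\cdot \divv_Y(x))_R=0$, and identify $-(\mathcal L_Y^{d-1}\cdot[\tilde X])_R$ with $e(I(R/xR))$ by applying Part (1) over $R/xR$. This is the same circle of manipulations (projection formula, pushforward of the center cycle) that the present paper itself carries out in the proof of \autoref{Bupval}, and the remaining care you flag (birationality of $\tilde X\to\Spec(R/xR)$ when $R/xR$ is non-reduced or has embedded components, edge cases in low dimension) is genuine but routine, since the associativity formula makes $e(I(R/xR))$ insensitive to embedded components.

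The genuine gap is in Part (1), and it is not mere bookkeeping. The statement is for an \emph{arbitrary} $d$-dimensional Noetherian local ring, and your key estimate $\length_R(\overline{I^n}/I^n)=o(n^d)$ is false at that level of generality: it is essentially equivalent to $R$ being analytically unramified. By Rees's theorem \cite{Re5}, analytic unramifiedness is characterized by the existence of $k$ with $\overline{I^n}\subset I^{n-k}$; when it fails, e.g.\ for a one-dimensional Noetherian local domain with $\dim(N(\hat R))=1$, one has $N(\hat R)\subset \overline{I^n\hat R}$ for all $n$, so $\length_R(R/\overline{I^n})\ls e(I,\hat R/N(\hat R))\,n+O(1)$, which is strictly smaller than $\length_R(R/I^n)=e(I)n+O(1)$ to leading order; hence $\length_R(\overline{I^n}/I^n)$ grows like $n^d$. (The identity $e(I)=e(\overline I)$ that you invoke concerns the single ideal $\overline I$, not the filtration $\{\overline{I^n}\}$.) A second, smaller, unjustified step is that $R^i\pi_*\mathcal O_Y$ has finite length for $i>0$: the theorem allows any proper birational $Y$ with $I\mathcal O_Y$ invertible, so $\pi$ may fail to be an isomorphism over a positive-dimensional locus; one must instead work with the kernels and cokernels of $R^i\pi_*\mathcal L^n\to R^i\pi_*\mathcal O_Y$, which are finite length, and then still bound their growth. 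So as written your argument proves Part (1) only for analytically unramified domains — i.e.\ it recovers Ramanujam's formula \cite{Ra} — whereas the content of \cite[Theorem A]{CM} is precisely the general case, which requires an additional reduction (e.g.\ via the associativity formula to complete local domains, where the integral-closure comparison becomes available) that your sketch does not supply.
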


\autoref{TheoremCMAC}(1) is a generalization of the famous formula by  Ramanujam \cite{Ra}, which has also appeared  with more assumptions on $R$ in \cite{Ra}, \cite{Laz1}, and \cite{BLQ}.
Related formulas are also found in \cite{KT}, \cite{KV}, \cite{GGV}, \cite{CR}, \cite{CRPU}, and \cite{P}. 

\autoref{TheoremCMAC}(2) is a geometric interpretation of  degree functions.
 The coefficients $d_i(I)$ in  \autoref{eq:Rees_Thmmm} 
are uniquely determined by \cite[Theorem 9.42]{Re2}, so we have that these coefficients are equal to 
\begin{equation}\label{I6'}
	d_i(I)=((I\mathcal O_Y)^{d-1}\cdot E_i)_R,
\end{equation}
where $v_i$  are the Rees valuations of $I$.

If $R$ is a local ring, we define $\Div(\fm_R)$ to be the set of equivalence classes of $\fm_R$-valuations of $R$, where two $\fm_R$-valuations are equivalent if they have the same valuation ring; $\fm_R$-valuations are defined in \autoref{SecDiv}. For $v\in \Div(\fm_R)$, let $C(W,v)$ be the center of $v$ on $W$, defined in \autoref{SubSecDeg}. The following extension of \autoref{TheoremCMAC}(2)  is proven in \autoref{SecAnRam}.

\begin{theorem}[{\autoref{Bupval}}]\label{Bupval''} Let $(R,\fm_R,\kk)$ be a  $d$-dimensional analytically unramified local domain and $I\subset R$ be an $\fm_R$-primary ideal. Further assume  that $\phi:W\rightarrow \Spec(R)$ is a proper birational morphism such that 
	$W$ is normal, and $\L_W= I\mathcal O_W$ is invertible. Then  for any $0\ne x\in R$,
	$$
	e(I(R/xR))=\sum_{v\in \Div(\fm_R)}v(x)(\L_W^{d-1}\cdot C(W,v))_{R},
	$$
	where the sum is over  $v\in \Div(\fm_R)$ of $R$.  
	
	Furthermore, 
	for $v\in \Div(\fm_R)$, $(\mathcal L_W^{d-1}\cdot C(W,v))_R>0$ if and only if $v$ is a Rees valuation of $I$.
\end{theorem}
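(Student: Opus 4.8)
The plan is to reduce the statement to \autoref{TheoremCMAC}(2) by comparing the general model $W$ with the normalized blowup $Y=\Proj\left(\bigoplus_{n\gs 0}\overline{I^n}\right)$. Since $R$ is analytically unramified, the Rees algebra of $I$ has finite integral closure, so $Y$ is the normalization of the blowup $B=\Proj\left(\bigoplus_{n\gs 0}I^n\right)$; thus $\pi\colon Y\to\Spec(R)$ is proper and birational, $\mathcal L_Y=I\mathcal O_Y$ is invertible, and the integral components $E$ of $\pi^{-1}(\fm_R)$ cut out exactly the Rees valuations $v_E$ of $I$. Because $I\mathcal O_W$ is invertible, I would first invoke the universal property of $B$ to factor $\phi$ through a morphism $W\to B$, and since $W$ is normal this in turn factors through the normalization $Y$. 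This produces a proper birational morphism $g\colon W\to Y$ with
\[
\mathcal L_W=I\mathcal O_W=g^*(I\mathcal O_Y)=g^*\mathcal L_Y.
\]

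Next I would analyze the centers appearing in the sum. For every $v\in\Div(\fm_R)$ the center $C(W,v)$ is contained in $\phi^{-1}(\fm_R)$, hence has dimension at most $d-1$, and $(\mathcal L_W^{d-1}\cdot C(W,v))_R$ vanishes for dimension reasons unless $C(W,v)$ is a prime divisor $F$ on $W$; so only finitely many terms survive. For such an $F$ the image $g(F)\subseteq\pi^{-1}(\fm_R)$ is either a prime divisor of $Y$ or has dimension at most $d-2$. In the first case, the local homomorphism $g^{\#}\colon\mathcal O_{Y,\eta_{g(F)}}\to\mathcal O_{W,\eta_F}$ is a local injection of discrete valuation rings with the common fraction field of $R$, hence an equality; therefore $g|_F$ is birational, $g(F)$ is a component $E$ of $\pi^{-1}(\fm_R)$, and $v=v_E$ is a Rees valuation. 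In the second case $g_*F=0$. This dichotomy shows that the only potentially nonzero terms come from the Rees valuations, each realized on $W$ by the strict transform of the corresponding $E$, with no double counting.

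The main computation would then be a projection-formula argument. Using $\mathcal L_W=g^*\mathcal L_Y$ and the projection formula for the intersection product $(-)_R$ from \cite[\S 2]{CM},
\[
(\mathcal L_W^{d-1}\cdot F)_R=\left((g^*\mathcal L_Y)^{d-1}\cdot F\right)_R=(\mathcal L_Y^{d-1}\cdot g_*F)_R.
\]
When $v$ is not a Rees valuation the dichotomy above forces $g_*F=0$ (or $\dim C(W,v)<d-1$), so the term is $0$; when $v=v_E$ is a Rees valuation, $g_*F=E$ and the term is $v_E(x)(\mathcal L_Y^{d-1}\cdot E)_R$. Summing and applying \autoref{TheoremCMAC}(2) gives
\[
\sum_{v\in\Div(\fm_R)}v(x)(\mathcal L_W^{d-1}\cdot C(W,v))_R=\sum_{E}v_E(x)(\mathcal L_Y^{d-1}\cdot E)_R=e(I(R/xR)),
\]
which is the asserted formula. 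For the final statement, the same identity gives $(\mathcal L_W^{d-1}\cdot C(W,v))_R=(\mathcal L_Y^{d-1}\cdot E)_R$ when $v=v_E$ and $0$ otherwise; by \autoref{TheoremCMAC}(2) and \autoref{I6'} these numbers equal the coefficients $d_i(I)$ of Rees's formula \autoref{eq:Rees_Thmmm}, which are positive precisely on the Rees valuations.

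I expect the main obstacle to be the center dichotomy: proving rigorously that a prime divisor $F$ on $W$ whose associated valuation is not a Rees valuation must be contracted by $g$ to dimension at most $d-2$, and that the strict transforms of the $E$ account for all surviving terms. This rests on the normality of both $W$ and $Y$ (so that divisorial valuations are realized in codimension one) together with the compatibility of centers under the proper morphism $g$; once it is in place, the projection formula and the positivity of the coefficients are inherited directly from \cite{CM} and Rees's theorem.
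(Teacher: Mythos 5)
Your proposal is correct and follows essentially the same route as the paper's proof: factor $\phi$ through the normalized blowup $Y=\Proj(\oplus_{n\gs 0}\,\overline{I^n})$, apply the projection formula for $(-)_R$, and analyze the pushforward of the centers $C(W,v)$, with the dichotomy $\lambda_*C(W,v)=0$ versus $\lambda_*C(W,v)=\mu\, C(Y,v)$. The only divergences are minor: the paper obtains the displayed formula on $W$ by citing \cite[Corollary 4.16]{CM} directly rather than re-deriving it from \autoref{TheoremCMAC}(2) as you do, and for the ``if'' direction of the last assertion it proves $(\mathcal L_Y^{d-1}\cdot C(Y,v))_R>0$ for Rees valuations $v\in\Div(\fm_R)$ via ampleness of $\mathcal L_Y$ on $Y$ (\autoref{Rval}) rather than via positivity of Rees's coefficients $d_i(I)$ --- since the paper only records $d_i(I)\in\ZZ_{\gs 0}$, you should back that final step by the ampleness argument.
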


We now turn to multiplicities and degree functions of graded families of $\fm_R$-primary ideals, the primary focus of this article; graded families of $\fm_R$-primary ideals are defined in \autoref{SecDiv}. The multiplicity of  a graded family of $\fm_R$-primary ideals $\II=\{I_n\}_{n\gs 0}$ is defined as 
$$
e(\II)=\lim_{n\rightarrow\infty} \frac{d!\ell_R(R/I_n)}{n^d}.
$$
In \cite[Theorem 1.1]{C2} it is shown that this limit exists 
for all  graded families of $\fm_R$-primary ideals in $R$ if and only if $\dim\left( N(\hat R)\right)<d$, where $N(\hat R)$ is 
the  niladical of the $\fm_R$-adic completion of $R$. The existence of this limit with more restrictions on $R$ is shown in \cite{ELS} and \cite{LM}. 
We prove the following extension of \autoref{TheoremCMAC}(1)  in \autoref{DegGr}.
\begin{headthm}[{\autoref{Theorem5}}]\label{Theorem5''} 
	Let $(R,\fm_R,\kk)$ be a $d$-dimensional local ring such that $\dim\left( N(\hat R)\right)<d$. 
	Let $\II:=\{I_n\}_{n\gs 0}$ be a graded family of $\fm_R$-primary ideals and let $Y_n\rightarrow \Spec(R)$ be  proper birational morphisms such that $I_n\mathcal O_{Y_n}$ is invertible. 
	Then
	$$
	e(\II)=
	\lim_{n\rightarrow\infty}-\frac{((I_n\mathcal O_{Y_n})^d)_R}{n^d}.
	$$
\end{headthm}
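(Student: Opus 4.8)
The plan is to reduce the statement to a purely numerical comparison of multiplicities and then close that comparison by a two–sided estimate. First I would apply \autoref{TheoremCMAC}(1) to each individual ideal $I_n$: since $Y_n\to\Spec(R)$ is proper and birational with $I_n\mathcal O_{Y_n}$ invertible, that result gives $-((I_n\mathcal O_{Y_n})^d)_R=e(I_n)$, the ordinary Hilbert--Samuel multiplicity of the $\fm_R$-primary ideal $I_n$; in particular the summand on the right does not depend on the chosen model $Y_n$. Thus the theorem is equivalent to the algebraic identity
$$
e(\II)=\lim_{n\to\infty}\frac{e(I_n)}{n^d},
$$
and the hypothesis $\dim(N(\hat R))<d$ is exactly what guarantees, via \cite{C2}, that $e(\II)=\lim_n d!\,\ell_R(R/I_n)/n^d$ exists in the first place.

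For the lower estimate I would use only the graded--family axiom. From $I_aI_b\subseteq I_{a+b}$ one obtains $I_n^m\subseteq I_{nm}$ for every $m$, hence $\ell_R(R/I_n^m)\ge \ell_R(R/I_{nm})$. Dividing by $m^d$ and letting $m\to\infty$, the left-hand side tends to $e(I_n)/d!$ (the definition of the multiplicity of the single ideal $I_n$) while the right-hand side tends to $n^d e(\II)/d!$ (the limit defining $e(\II)$, read off along the subsequence $\{nm\}_m$). This yields $e(I_n)\ge n^d e(\II)$ for every $n$, and therefore $\liminf_n e(I_n)/n^d\ge e(\II)$.

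The main obstacle is the reverse inequality $\limsup_n e(I_n)/n^d\le e(\II)$, and here no pointwise estimate of Lech type is available: the naive inequality $e(I)\le d!\,\ell_R(R/I)$ already fails (for instance for $\fm_R$ in the cone over a Fermat curve of degree $\ge 3$, or for $R=\kk[[x,y]]/(xy)$), so the gap must be closed asymptotically. I would first carry out the standard reductions, none of which changes either side of the identity: pass to $\hat R$, then use additivity of multiplicity over the $d$-dimensional minimal primes of $\hat R$ (legitimate precisely because $\dim(N(\hat R))<d$) together with normalization, reducing to the case where $R$ is a complete normal local domain. In that case $e(\II)$ carries the volume/intersection-theoretic description developed earlier in the paper, and the point is that the $I_p$-adic subfiltrations $\{I_p^k\}_k\subseteq\{I_{pk}\}_k$ exhaust $\II$ as $p\to\infty$: their normalized multiplicities $e(I_p)/p^d$ decrease to $e(\II)$ because the volume governing $e(\II)$ is the limit of the volumes attached to the subfamilies generated in a single degree $p$. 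Concretely, one must show that the defect
$$
e(I_p)-p^d e(\II)=\lim_{m\to\infty}\frac{d!\,\ell_R(I_{pm}/I_p^m)}{m^d}
$$
is $o(p^d)$, and this is where the hypotheses and the machinery of \cite{C2} (and of the intersection product) do the real work. Combining the two estimates gives $\lim_n e(I_n)/n^d=e(\II)$, which by the first paragraph is exactly the assertion of the theorem.
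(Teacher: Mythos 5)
Your reduction is exactly the paper's: \autoref{TheoremCMAC}(1) converts each intersection number $-((I_n\mathcal O_{Y_n})^d)_R$ into $e(I_n)$, so the theorem becomes the identity $e(\II)=\lim_{n\rightarrow\infty}e(I_n)/n^d$. At that point the paper's entire proof is a citation of the Volume $=$ Multiplicity formula \cite[Theorem 1.1]{Cvol}, which asserts precisely this identity under the hypothesis $\dim(N(\hat R))<d$; nothing further is done. Your lower bound $e(I_n)\gs n^d e(\II)$, obtained from $I_n^m\subseteq I_{nm}$ by comparing Hilbert--Samuel functions, is correct and is indeed the elementary half. But your treatment of the reverse inequality $\limsup_{n}e(I_n)/n^d\ls e(\II)$ is not a proof: the assertion that the defect $e(I_p)-p^d e(\II)$ is $o(p^d)$ \emph{is} the hard content of \cite[Theorem 1.1]{Cvol} (established there, after reductions of the kind you describe, by Okounkov-body/volume techniques), and deferring it to ``the machinery of \cite{C2} and the intersection product'' leaves that step entirely open --- note in particular that the intersection product plays no role in this direction, since by your own first paragraph the statement has already been stripped of all intersection-theoretic content. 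So either cite \cite[Theorem 1.1]{Cvol} outright, in which case your argument collapses to the paper's two-line proof and your first paragraph alone suffices, or supply the missing upper bound; as written, the second half of the proposal is a gap rather than a genuinely different route.
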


As a result of \autoref{TheoremCMAC}(1), \autoref{Bupval''} and \autoref{Theorem5''}, we prove in  \autoref{DegGr} the following result on degree functions of graded families of $\fm_R$-primary ideals. 

\begin{headthm}[{\autoref{Theorem4})\label{Theorem4''} }]
Let $(R,\fm_R,\kk)$ be a $d$-dimensional  excellent normal   local ring.  
 Let $\II:=\{I_n\}_{n\gs 0}$ be a graded family of $\fm_R$-primary ideals.
Let  $0\neq x\in  R$ be such that $R/xR$ is reduced. For $n\gs 1$, let $Y_n$ be a proper birational morphism such that $Y_n$ is normal and $I_n\mathcal O_{Y_n}$ is invertible. 
Denote by $\II(R/xR)=\{I_n(R/xR)\}_{n\gs 0}$ the image of $\II$ in $R/xR$, which is a graded family of $\m_{R/xR}$-primary ideals in $R/xR$.  
 Then
	$$
	e(\II(R/xR))=\lim_{n\rightarrow \infty}\left(\sum_{v\in {\rm Div}(\m_R)}v(x)\frac{((I_n\mathcal O_{Y_n})^{d-1}\cdot C(Y_n,v))_R}{n^{d-1}}\right).
	$$
\end{headthm}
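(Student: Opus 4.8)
The plan is to obtain \autoref{Theorem4''} by feeding \autoref{Bupval''} into \autoref{Theorem5''}, the latter applied not to $R$ but to the quotient $S:=R/xR$. Observe first that, since $R$ is excellent and normal, it is a local domain whose completion is reduced, i.e.\ $R$ is an analytically unramified local domain, so \autoref{Bupval''} is available for every $I_n$; moreover $S$ is a $(d-1)$-dimensional excellent local ring which is reduced by hypothesis. I will take $d\gs 2$, so that $\dim S=d-1\gs 1$; the cases $d\ls 1$ are degenerate and checked directly (for $d=1$ the ring $S$ is Artinian and both sides are read off from the definitions).

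The first step is to verify that $S$ satisfies the hypotheses of \autoref{Theorem5''}. That $\II(R/xR)=\{I_n(R/xR)\}$ is a graded family of $\fm_S$-primary ideals is immediate from $I_aI_b\subseteq I_{a+b}$. The essential point is the nilradical condition: because $S$ is excellent and reduced, its completion $\hat S$ is reduced, so $N(\hat S)=0$ and therefore $\dim N(\hat S)<\dim S$. This is exactly where the hypothesis that $R/xR$ be reduced is used, and it is the one genuinely load-bearing verification in the argument. Granting it, \autoref{Theorem5''} applies to $S$ and $\II(R/xR)$: choosing for each $n$ any proper birational $Z_n\to\Spec(S)$ on which $I_n(R/xR)\mathcal O_{Z_n}$ is invertible (for instance a blow-up), it gives
$$
e(\II(R/xR))=\lim_{n\to\infty}-\frac{((I_n(R/xR)\mathcal O_{Z_n})^{d-1})_S}{n^{d-1}}=\lim_{n\to\infty}\frac{e(I_n(R/xR))}{n^{d-1}},
$$
where the second equality is \autoref{TheoremCMAC}(1) for the $\fm_S$-primary ideal $I_n(R/xR)$ in the $(d-1)$-dimensional ring $S$ (which also shows the limit is independent of the auxiliary choice of $Z_n$).

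The second step is to rewrite each term $e(I_n(R/xR))$ using the $Y_n$ supplied in the hypotheses. For each fixed $n$ I apply \autoref{Bupval''} to the $\fm_R$-primary ideal $I_n$ in $R$ with $W=Y_n$ (normal, with $I_n\mathcal O_{Y_n}$ invertible), obtaining the exact finite identity
$$
e(I_n(R/xR))=\sum_{v\in\Div(\fm_R)}v(x)\,((I_n\mathcal O_{Y_n})^{d-1}\cdot C(Y_n,v))_R,
$$
in which only the Rees valuations of $I_n$ contribute a nonzero summand. Substituting this into the limit from the first step and distributing the factor $1/n^{d-1}$ yields the asserted formula. The feature that makes this clean is that no exchange of limits is required: \autoref{Bupval''} is an \emph{exact} equality, for each fixed $n$, between two finite quantities, so the only limit in play is the single limit over $n$ produced by \autoref{Theorem5''}. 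Consequently the crux of the proof is not an estimate but the hypothesis check of the first step, namely that the reduced excellent ring $R/xR$ meets the nilradical condition needed to invoke \autoref{Theorem5''}.
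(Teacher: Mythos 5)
Your proposal is correct and follows essentially the same route as the paper: establish that $R/xR$ is analytically unramified (equivalently, that its completion has zero nilradical), use a volume-equals-multiplicity statement to write $e(\II(R/xR))=\lim_n e(I_n(R/xR))/n^{d-1}$, and then substitute the exact identity of \autoref{Bupval''} for each fixed $n$, so that no interchange of limit and sum is needed. The only cosmetic difference is that the paper cites the volume-equals-multiplicity theorem for graded families in $R/xR$ directly, whereas you reassemble the same equality from \autoref{Theorem5''} applied to $R/xR$ together with \autoref{TheoremCMAC}(1).
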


We have that $\frac{((I_n\mathcal O_{Y_n})^{d-1}\cdot C(Y_n,v))_R}{n^{d-1}}>0$ if and only if $v$ is a Rees valuation of $I_n$ by \autoref{Bupval''}. 
In \autoref{ExS0}, we construct a filtration  on a 2-dimensional regular local ring such that the sum and limit in the conclusions of \autoref{Theorem4''} do not commute. 
The following corollary shows that the sum and limit in \autoref{Theorem4''} does commute under some conditions.

\begin{headcor}[{\autoref{CorSwitch}}]\label{CorSwitch''} 
Under the notations in \autoref{Theorem4''}, further assume that the union of the sets of Rees valuations  of all the ideals $I_n$ is finite and that the limit 
	$$
	\lim_{n\rightarrow\infty}\frac{((I_n\mathcal O_{Y_n})^{d-1}\cdot C(Y_n,v))_R}{n^{d-1}}
	$$
	exists for all  $v\in \Div(\m_R)$. Then
	$$
	e(\II(R/xR))=\sum_{v\in \Div(\m_R)}v(x)\left(\lim_{n\rightarrow\infty}\frac{((I_n\mathcal O_{Y_n})^{d-1}\cdot C(Y_n,v))_R}{n^{d-1}}\right).
	$$
That is, the limit and sum in  \autoref{Theorem4''} commute.
\end{headcor}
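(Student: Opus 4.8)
The plan is to derive the corollary directly from \autoref{Theorem4''} and the positivity criterion recorded immediately after it. For $v\in\Div(\m_R)$ and $n\gs 1$ write
$$
a_n(v):=\frac{((I_n\mathcal{O}_{Y_n})^{d-1}\cdot C(Y_n,v))_R}{n^{d-1}},
$$
so that \autoref{Theorem4''} reads
$$
e(\II(R/xR))=\lim_{n\to\infty}\sum_{v\in\Div(\m_R)}v(x)\,a_n(v).
$$
The essential point is that, although each inner sum is indexed by the infinite set $\Div(\m_R)$, it is actually supported on one fixed finite set. By \autoref{Bupval''} we have $a_n(v)>0$ exactly when $v$ is a Rees valuation of $I_n$, and $a_n(v)=0$ for every other $v$. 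Writing $\mathcal V_n$ for the finite set of Rees valuations of $I_n$ and $\mathcal V:=\bigcup_{n\gs 1}\mathcal V_n$, it follows that $a_n(v)=0$ whenever $v\notin\mathcal V$, for every $n$. Since $\mathcal V$ is finite by hypothesis, for each $n$ we may replace the sum over $\Div(\m_R)$ by the finite sum over the single index set $\mathcal V$:
$$
\sum_{v\in\Div(\m_R)}v(x)\,a_n(v)=\sum_{v\in\mathcal V}v(x)\,a_n(v).
$$

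With the index set now finite and independent of $n$, interchanging the limit and the sum is the elementary fact that the limit of a finite sum equals the sum of the limits, as soon as each summand converges. Each $v(x)$ is a fixed nonnegative integer, and by hypothesis $\lim_{n\to\infty}a_n(v)$ exists for every $v\in\Div(\m_R)$; hence
$$
\lim_{n\to\infty}\sum_{v\in\mathcal V}v(x)\,a_n(v)=\sum_{v\in\mathcal V}v(x)\Big(\lim_{n\to\infty}a_n(v)\Big).
$$
Finally, for $v\notin\mathcal V$ one has $a_n(v)=0$ for all $n$, so $\lim_{n\to\infty}a_n(v)=0$ and such $v$ contribute nothing; this lets the finite sum over $\mathcal V$ be re-expanded to the full sum over $\Div(\m_R)$ without altering its value. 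Combining the three displays yields the claimed identity for $e(\II(R/xR))$.

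I do not anticipate a genuine obstacle: once \autoref{Theorem4''} and the positivity statement of \autoref{Bupval''} are available, the whole content is that the finiteness hypothesis collapses every a priori infinite sum onto the common finite index set $\mathcal V$, after which the limit passes through a finite sum for free. The only step deserving care is that the summands attached to $v\notin\mathcal V$ vanish identically in $n$ rather than merely in the limit; this is exactly the vanishing half of the criterion ``$a_n(v)>0$ if and only if $v$ is a Rees valuation of $I_n$,'' together with the nonnegativity of these intersection numbers. It is precisely this mechanism that breaks down in \autoref{ExS0}, where $\mathcal V$ is infinite and the interchange genuinely fails.
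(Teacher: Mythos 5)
Your proposal is correct and follows essentially the same route as the paper: the finiteness hypothesis together with the vanishing half of \autoref{Bupval} (plus nonnegativity of the intersection numbers) collapses each sum onto the fixed finite set of all Rees valuations, after which the limit passes through the finite sum term by term. No gaps.
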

If $\II=\{I_n\}_{n\gs 0}$ is Noetherian, i.e. $R[\II]=\oplus_{n\gs 0}\,I_n$ is  a finitely generated $R$-algebra, then the set of all Rees valuations of the ideals $I_n$ is a finite set, see \autoref{prop:Rees_finite}. However, if $\II$ is not Noetherian, then the set of all Rees valuations can be either a finite or an  infinite set. We give examples of divisorial filtrations of $\fm_R$-primary ideals on a 2-dimensional normal local ring which is essentially of finite type over a field in  \autoref{ExS1} and \autoref{ExS2} illustrating this behavior. An example of a symbolic filtration $\{P^{(n)}\}_{n\gs 0}$ of a height one prime ideal $P$ in a 2-dimensional normal local ring such that the set of all the Rees valuations of the symbolic powers $P^{(n)}$ is infinite is given in \cite{C10}.

  \autoref{CorSwitch''} extends the degree functions in \cite{Re}, as defined  right before  \autoref{eq:Rees_Thmmm}, to graded families of $\fm_R$-primary ideals under a suitable assumption. In this extension, we can think of the limits $d(\II,v):=\lim_{n\rightarrow\infty}\frac{((I_n\mathcal O_{Y_n})^{d-1}\cdot C(Y_n,v))_R}{n^{d-1}}$ as corresponding to  coefficients of $v(x)$ in \cite[\S 3]{ReSh} or \autoref{eq:Rees_Thmmm}.  
 \autoref{Theorem4''} and \autoref{CorSwitch''} raise the following general question. 

 \begin{question}\label{quest:limit_exists}
 Under the notations in \autoref{Theorem4''}, when does the limit
 $$
\lim_{n\rightarrow\infty}\frac{((I_n\mathcal O_{Y_n})^{d-1}\cdot C(Y_n,v))}{n^{d-1}}
$$
exists for $v\in \Div(\m_R)$?
 \end{question}
 
 Our next theorem shows that if $(R,\fm_R,\kk)$ 
 is a normal and excellent 
 $2$-dimensional local ring and $\II$ is a $\QQ$-divisorial filtration of $\fm_R$-primary ideals, then the limit in \autoref{quest:limit_exists} not only exists, but is given by a simple formula. For the statements and proof of the theorem we need some prior notation.

Let assumptions be as in the previous paragraph.  There exists a birational projective morphism $\pi:X\rightarrow \Spec(R)$ such that $X$ is nonsingular and $\pi^{-1}(\m_R)$ is a simple normal crossings 
divisor on $X$, so that all integral components of  $\pi^{-1}(\m_R)$ are nonsingular projective curves over $\kk$. Moreover, there exists an effective  $\QQ$-divisor $\Delta$ on $X$ with exceptional support,  such that $\Gamma(X,\mathcal O_X(-\lceil n\Delta\rceil))=I_n$ for $n\gs 0$ and $-\Delta$ is a nef divisor on $X$. The existence of $X$ and $\Delta$ with these properties is shown in \cite{CNag}.
We prove the following theorem in \autoref{Sec2dim}.

\begin{headthm}[{\autoref{TheoremLim}}]\label{TheoremLim''}  Let $R$ be a normal and excellent 2-dimensional local ring and $\II=\{I_n\}_{n\gs 0}$ be a $\QQ$-divisorial filtration of $\fm_R$-primary ideals on $R$.
Following the notation in the previous paragraph, let  $Y_n\rightarrow \Spec(R)$ be  projective birational morphisms such that $Y_n$ is normal and $I_n\mathcal O_{Y_n}$ is invertible. Then  
for every  $v\in \Div(\m_R)$ we have 
	$$
	\lim_{n\rightarrow\infty}\frac{(I_n\mathcal O_{Y_n}\cdot C(Y_n,v))}{n}=(-\Delta\cdot C(X,v)),
	$$
where $(\Delta\cdot C(X,v))$ is defined to be $\frac{1}{r}(r\Delta\cdot C(X,v))$ with $r\in \ZZ_{>0}$ and  $r\Delta$ an integral divisor.     
	In particular, this limit exists.
\end{headthm}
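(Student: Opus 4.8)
The plan is to use that the number $(I_n\mathcal O_{Y_n}\cdot C(Y_n,v))$ is independent of the chosen model $Y_n$ (by the projection formula for the product $(-)_R$ established in \cite{CM}), so that I may replace $Y_n$ by the normalization of the blow-up of $I_n\mathcal O_X$. This comes with a projective birational morphism $f_n\colon Y_n\to X$, with $Y_n$ normal, and $I_n\mathcal O_{Y_n}=\mathcal O_{Y_n}(-D_n)$ invertible and \emph{globally generated} (its sections include the images of the generators of $I_n$), so that $(I_n\mathcal O_{Y_n}\cdot C)\gs 0$ for every curve $C$ in the fiber. First I would record the inclusion $I_n\mathcal O_X\subseteq\mathcal O_X(-\lceil n\Delta\rceil)$ coming from $I_n=\Gamma(X,\mathcal O_X(-\lceil n\Delta\rceil))$; pulling back to $Y_n$ this reads $D_n=f_n^{*}\lceil n\Delta\rceil+F_n$ with $F_n\gs 0$ the fixed (base-locus) part of $|\mathcal O_X(-\lceil n\Delta\rceil)|$ on $Y_n$.

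Next I would split according to the center of $v$ on $X$. If $C(X,v)=E_i$ is a component of $\pi^{-1}(\m_R)$, then $v=v_{E_i}$, its center on $Y_n$ is the strict transform $\widetilde{E_i}$, and $f_{n*}\widetilde{E_i}=E_i$. Using $I_n\mathcal O_{Y_n}=\mathcal O_{Y_n}(-D_n)$ and the projection formula,
\[
(I_n\mathcal O_{Y_n}\cdot \widetilde{E_i})=-(f_n^{*}\lceil n\Delta\rceil\cdot\widetilde{E_i})-(F_n\cdot\widetilde{E_i})=-(\lceil n\Delta\rceil\cdot E_i)-(F_n\cdot\widetilde{E_i}).
\]
Since $\lceil n\Delta\rceil-n\Delta$ has coefficients in $[0,1)$, the first term is $n(-\Delta\cdot E_i)+O(1)$, so it contributes exactly $(-\Delta\cdot E_i)$ after dividing by $n$. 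Because the $\QQ$-divisorial hypothesis forces $v_{E_i}(I_n)=\lceil na_i\rceil$ (no systematic extra vanishing along $E_i$), $\widetilde{E_i}\not\subseteq\Supp(F_n)$, whence $(F_n\cdot\widetilde{E_i})\gs 0$; this already gives the upper bound $\limsup_n (I_n\mathcal O_{Y_n}\cdot \widetilde{E_i})/n\ls(-\Delta\cdot E_i)$. If instead $C(X,v)$ is a closed point, then either $C(Y_n,v)$ is a point and both the intersection number and the target $(-\Delta\cdot C(X,v))$ vanish, or $C(Y_n,v)=\widetilde E$ is $f_n$-exceptional, so $(f_n^{*}\lceil n\Delta\rceil\cdot\widetilde E)=0$ and $(I_n\mathcal O_{Y_n}\cdot\widetilde E)=-(F_n\cdot\widetilde E)$, with target again $0$.

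Thus everything reduces to the sublinear bound $(F_n\cdot C(Y_n,v))=o(n)$, and this is where nefness of $-\Delta$ is essential and where the main difficulty lies. The coefficient $\operatorname{coeff}_{\widetilde E}(F_n)=v_{\widetilde E}(I_n)-\ord_{\widetilde E}(f_n^{*}\lceil n\Delta\rceil)$ is exactly the order of vanishing of the base locus of $|\mathcal O_X(-\lceil n\Delta\rceil)|$ along $\widetilde E$, and its normalized limit is the asymptotic order of vanishing of $-\Delta$ along $\widetilde E$. For a nef divisor these asymptotic base multiplicities vanish: the restricted base locus $\mathbf{B}_{-}(-\Delta)$ is empty (Nakamaye's theorem), and emptiness persists on every blow-up since the pullback of a nef divisor is nef. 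I would make this quantitative in the relative surface setting, using the asymptotic generation results for nef $-\Delta$ on resolutions of $2$-dimensional normal local rings in \cite{CNag} and \cite{C2}, to control not only each coefficient of $F_n$ but also the (finitely many, though $n$-dependent) curves of $Y_n$ meeting $C(Y_n,v)$, and conclude $(F_n\cdot C(Y_n,v))/n\to 0$.

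Combining the main term with the vanishing of the error then yields $\lim_n (I_n\mathcal O_{Y_n}\cdot C(Y_n,v))/n=(-\Delta\cdot C(X,v))$ in all cases, proving both existence of the limit and its stated value; passing to an integral multiple $r\Delta$ only rescales the computation and matches the definition $(-\Delta\cdot C(X,v))=\tfrac1r(-r\Delta\cdot C(X,v))$. The hardest and most technical step is precisely the bound $(F_n\cdot C(Y_n,v))=o(n)$, since the models $Y_n$ and the support of $F_n$ vary with $n$; the nefness of $-\Delta$, equivalently the emptiness of its restricted base locus, is exactly the input that forces these base-locus intersections to be sublinear.
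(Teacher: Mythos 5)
Your overall reduction --- model--independence of the intersection number via the projection formula, writing $I_n\mathcal O_{Y_n}=\mathcal O_{Y_n}(-f_n^{*}\lceil n\Delta\rceil-F_n)$ with $F_n\gs 0$, and splitting according to whether $C(X,v)$ is a curve or a closed point --- has the same shape as the paper's argument (\autoref{PropLim1}, \autoref{CorLim}, \autoref{PropLim2}), and your upper bound is essentially \autoref{PropLim1}. But one of your intermediate claims is false and the decisive step is missing. The claim that the $\QQ$-divisorial hypothesis forces $v_{E_i}(I_n)=\lceil na_i\rceil$, hence $\widetilde{E_i}\not\subseteq\Supp(F_n)$, is contradicted by the paper's own \autoref{ExS1}: there $C^*\subset \sBL(-\Delta)$ and $I_n\mathcal O_Y=\mathcal O_Y(-n\Delta-C^*)$, so $v_{C^*}(I_n)=n+1>n=\lceil n\cdot 1\rceil$ for every $n$. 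What is actually true, and what the paper uses, is that the divisorial part of the fixed locus of $-\lceil n\Delta\rceil$ is bounded by a single effective divisor independent of $n$ (\cite[Proposition 5.3]{CNag}); with that, $(F_n\cdot\widetilde{E_i})$ is bounded below and your upper bound can be repaired.

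The step you yourself identify as the hardest, $(F_n\cdot C(Y_n,v))=o(n)$, is not proved, and the mechanism you propose does not deliver it. Emptiness of the restricted base locus of a nef divisor (which is essentially the definition of $\mathbf{B}_-$ rather than Nakamaye's theorem, and is in any case a statement about projective varieties over a field, not this relative excellent local setting) controls at best the asymptotic multiplicity of the base locus along a \emph{fixed} divisor. Here the dangerous contribution to $(F_n\cdot C(Y_n,v))$ comes from the zero-dimensional base locus of $|{-\lceil n\Delta\rceil}|$ along $C(X',v)$, i.e.\ from the multiplicities $a_1(n),\ldots,a_{\sigma(n)}(n)$ at infinitely near base points on models $X_{\sigma(n)}$ that change with $n$, and whose number grows with $n$; a bound on each $\lim_n a_i(n)/n$ separately does not bound the sum. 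The paper's \autoref{PropLim2} proves the uniform bound $\deg(\mathcal B_n)+\sum_i a_i(n)[R(p_i):\kk]\ls f$ by a Riemann--Roch computation on the curve $C(X',v)$, whose essential input is the uniform bound $\ell_R\left(H^1(X,\mathcal O_X(-\lceil n\Delta\rceil-C))\right)<e$ of \cite[Corollary 5.7]{CNag} (this is where nefness of $-\Delta$ enters, not through any base-locus formalism). Nothing in your sketch substitutes for this, so the lower bound $\liminf_n (I_n\mathcal O_{Y_n}\cdot C(Y_n,v))/n\gs(-\Delta\cdot C(X,v))$ --- and hence the existence of the limit --- remains unproved.
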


Finally, from \autoref{TheoremLim''} and \autoref{CorSwitch''} we conclude the following corollary.

\begin{headcor}[{\autoref{dim2lim}}]\label{dim2lim''} Under the notations in \autoref{TheoremLim''}, further assume that the union of the sets of Rees valuations of all the ideals $I_n$ is finite. For every $0\neq x\in  R$ with $R/xR$ is reduced we have
	$$
	e(\II(R/xR))=\sum_{v\in\Div(\m_R)}v(x)(-\Delta\cdot C(X,v)).
	$$
\end{headcor}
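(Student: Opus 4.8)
The plan is to obtain this corollary as a direct consequence of \autoref{TheoremLim''} and \autoref{CorSwitch''}, with \autoref{TheoremLim''} supplying exactly the existence statement that one of the hypotheses of \autoref{CorSwitch''} demands. Since a $\QQ$-divisorial filtration of $\fm_R$-primary ideals is in particular a graded family of $\fm_R$-primary ideals, and since $R$ is assumed excellent, normal, and $2$-dimensional, the whole setup of \autoref{Theorem4''} and \autoref{CorSwitch''} is in force with $d=2$. In this situation \autoref{Theorem4''} already gives
$$
e(\II(R/xR))=\lim_{n\rightarrow\infty}\left(\sum_{v\in\Div(\m_R)}v(x)\frac{(I_n\mathcal O_{Y_n}\cdot C(Y_n,v))_R}{n}\right),
$$
so the entire content of the corollary is that here the limit may be passed inside the sum and evaluated termwise.

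First I would check the two hypotheses of \autoref{CorSwitch''}. The finiteness of the union over all $n$ of the sets of Rees valuations of the $I_n$ is assumed outright. The remaining hypothesis is that, for each fixed $v\in\Div(\m_R)$, the limit $\lim_{n\rightarrow\infty}\frac{(I_n\mathcal O_{Y_n}\cdot C(Y_n,v))_R}{n}$ exists; but this is precisely the conclusion of \autoref{TheoremLim''}, which moreover identifies its value as $(-\Delta\cdot C(X,v))$, where $X$ and $\Delta$ are the nonsingular model and effective $\QQ$-divisor attached to $\II$ as in the paragraph preceding \autoref{TheoremLim''}. Thus \autoref{TheoremLim''} answers \autoref{quest:limit_exists} affirmatively in this setting and simultaneously furnishes the missing hypothesis of \autoref{CorSwitch''}.

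With both hypotheses in hand, I would invoke \autoref{CorSwitch''} (in the case $d=2$) to interchange the limit and the finite sum, obtaining
$$
e(\II(R/xR))=\sum_{v\in\Div(\m_R)}v(x)\left(\lim_{n\rightarrow\infty}\frac{(I_n\mathcal O_{Y_n}\cdot C(Y_n,v))_R}{n}\right).
$$
Substituting the evaluation of each inner limit from \autoref{TheoremLim''} then yields $e(\II(R/xR))=\sum_{v\in\Div(\m_R)}v(x)(-\Delta\cdot C(X,v))$, as desired.

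At the level of this corollary there is essentially no genuine obstacle: it is a formal consequence of the two previously established results, all of the analytic difficulty having been absorbed into the proof of \autoref{TheoremLim''} (convergence of the normalized intersection numbers and the determination of their limit) and into \autoref{CorSwitch''} (the justification of the interchange under the finiteness hypothesis). The only points requiring care are bookkeeping ones: confirming that the models $Y_n$ and the valuations $v$ range over the same data in all three statements, that the $\QQ$-divisor intersection $(-\Delta\cdot C(X,v))$ is read through its integral multiple $\tfrac1r(r\Delta\cdot C(X,v))$ as stipulated in \autoref{TheoremLim''}, and that only the finitely many $v$ lying in the union of Rees valuations contribute nonzero terms, so that the right-hand sum is genuinely finite.
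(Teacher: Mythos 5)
Your proposal is correct and matches the paper exactly: the paper derives \autoref{dim2lim} as an immediate consequence of \autoref{TheoremLim} (supplying the existence and value of the limits) together with \autoref{CorSwitch} (justifying the interchange of limit and sum under the finiteness hypothesis). No further comment is needed.
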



\section{Preliminaries}
In this section we provide the background information and notation that is used in the rest of the paper. Throughout we assume the ring $R$ is Noetherian.

\subsection{Graded families and divisorial filtrations}\label{SecDiv}
Let $R$ be a ring. A sequence of ideals $\II=\{I_n\}_{n\gs 0}$ is a {\it graded family of ideals} if $I_0=R$ and $I_nI_m\subset I_{n+m}$ for every $n,m\gs 0$. We say that that a graded family $\II$ is a {\it filtration} if in addition one has $I_{n+1}\subset I_n$ for every $n\gs 0$. We say that a graded family is {\it Noetherian} if its {\it Rees algebra} $R[\II]=\oplus_{n\gs 0}\,I_n$ is Noetherian.

We write $(\mathcal O_v,\fm_v,\kk_v)$ for the valuation ring, maximal ideal, and residue field of a valuation $v$.

An $R$-valuation of a local domain $R$ is a valuation $v$ of the quotient field of $R$ which is nonnegative on $R$. More generally, if $R$ is a local ring, then an $R$-valuation is an $R/P$-valuation $v$ of the quotient field of $R/P$ for some prime ideal $P$ of $R$. For $x\in R$, we write $v(x)=v(\overline{x})$, where $\overline{x}$ is the image of $x$ in $R/P$ if $x\not\in P$, and write $v(x)=\infty$ if $x\in P$. 

A {\it divisorial valuation} $v$ of a local domain $R$ is an $R$-valuation such that 
$$
\trdeg_{\kk(P)}(\kk_v)=\height(P)-1
$$ 
where $P=\fm_v\cap R$ ($P$ is the center of $v$ on $R$) and $\kk(P)$ is the quotient field of $R/P$ (\cite[Definition 9.3.1]{HS}).

If $v$ is a divisorial valuation, then there is equality in Abhyankar's inequality, so by \cite[Theorem 1]{Ab}, $\kk_v$ is a finitely generated field extension of  $\kk$.

In \cite[Theorem 9.3.2]{HS} it is shown that  the valuation ring $\mathcal O_v$ of  a divisorial valuation $v$ of a local domain $R$ is Noetherian. In particular, every divisorial valuation is $\ZZ$-valued by \cite[Corollary 6.5]{HS}. 
Further, if $R$ is locally 
analytically unramified, then, by \cite[Theorem 9.3.2]{HS}, every divisorial valuation $v$ of  $R$ is essentially of finite type over $R$.

 We now give the definition of an $\fm_R$-valuation, which can be found in \cite[\S 3]{ReSh}. 
Let $(R,\fm_R,\kk)$ be a $d$-dimensional local domain.  An {\it $\fm_R$-valuation} of $R$ is a valuation $v$ of $R$ centered at $\fm_R$ such that $v$ is $\ZZ$-valued and   $\kk_v$ is a finitely generated extension field of $\kk$ of transcendence degree $d-1$. Equivalently, $v$ is a divisorial valuation  with respect to $R$ that is centered at $\fm_R$ (as follows from the above discussion).

More generally, if  $R$ is a $d$-dimensional local ring, then an $\fm_R$-valuation of $R$ is an $\fm_R$-valuation of $R/P$, where $P$ is a minimal prime of $R$ such that $\dim(R/P)=d$. 
We define $\Div(\fm_R)$ to be the set of equivalence classes of $\fm_R$-valuations of $R$, where two $\fm_R$-valuations are equivalent if they have the same valuation ring.

Let $R$ be a ring and $v$ be an $R$-valuation whose valuation group is  a subgroup of  $\RR$, the ordered field of real numbers. For every $\lambda \in \RR$, we define the valuation ideal $I(v)_{\lambda}$ of $R$ as
$$
I(v)_{\lambda}=\{f\in R\mid v(f)\gs \lambda\}.
$$

For the remainder of this subsection, we assume that $R$ is an excellent normal local domain.
A {\it divisorial filtration} of $R$ is a  filtration $\II=\{I_n\}_{n\gs 0}$ such that there exist divisorial valuations $v_1,\ldots,v_r$ of $R$ and $\lambda_1,\ldots,\lambda_r\in \RR_{\gs 0}$ such that 
$$
I_n=I(v_1)_{n\lambda_1}\cap\cdots\cap I(v_r)_{n\lambda_r}
$$
for all $n\gs 0$. The filtration $\II$ is called a $\QQ$-divisorial filtration if all $\lambda_i\in \QQ$ and $\II$ is called a $\ZZ$-divisorial filtration if all $\lambda_i\in \ZZ$.

\begin{proposition}[\hspace{-.1pt}{\cite[\S 6]{CP}}]\label{PropRep}
Suppose that  $R$ is an excellent normal local ring and $v_1, . . . , v_r$ are divisorial valuations of
$R$. Then  there exists
an ideal $K$ of R such that its blowup $\phi:X\rightarrow \Spec(R)$ is normal and such that there 
exist prime Weil divisors $E_1, \ldots , E_r$ on $X$ with
$\mathcal O_{v_i}=\mathcal O_{X,E_i}$ for $1\ls i \ls r$.
\end{proposition}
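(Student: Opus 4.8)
\emph{Strategy.} The plan is to realize $v_1,\dots,v_r$ simultaneously as prime divisors on a single \emph{normal} projective birational model of $\Spec(R)$, and then to present that model as the blowup of one ideal $K$ in such a way that the blowup is \emph{already} normal. First I reduce to the domain case: a Noetherian normal local ring is a domain, so all the $v_i$ are valuations of the common quotient field $L$ of $R$. Since $R$ is excellent, each divisorial valuation $v_i$ is essentially of finite type over $R$ (as recalled just before the statement, via \cite[Theorem 9.3.2]{HS}); consequently $\mathcal O_{v_i}$ is the localization of a finitely generated normal $R$-subalgebra of $L$ at a height-one prime, and therefore $v_i$ is the order valuation $\mathcal O_{X_i,E_i}$ of a prime divisor $E_i$ on some normal projective birational morphism $X_i\to\Spec(R)$.

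Next I build a common model. Let $X'$ be the normalization of the closure of the image of $\Spec(L)$ in the fiber product $X_1\times_{\Spec(R)}\cdots\times_{\Spec(R)}X_r$. Then $X'$ is a normal projective birational $R$-scheme (projectivity is preserved by fiber products and closed immersions, and the normalization is finite because $R$ is excellent), and the projections give proper birational morphisms $X'\to X_i$. Each $v_i$ still has a codimension-one center on $X'$ realizing it: if $\xi$ is the center of $v_i$ on $X'$ (which exists by the valuative criterion of properness), then $\xi$ maps to the generic point of $E_i$, so the induced local inclusions give
$$
\mathcal O_{X_i,E_i}\subseteq \mathcal O_{X',\xi}\subseteq \mathcal O_{v_i}=\mathcal O_{X_i,E_i}.
$$
Hence $\mathcal O_{X',\xi}=\mathcal O_{v_i}$ is a discrete valuation ring, so $\xi$ is a prime divisor $E_i'$ on $X'$ with $\mathcal O_{X',E_i'}=\mathcal O_{v_i}$.

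It remains to write $X'$ as an honest blowup with normal total space. Since $\phi':X'\to\Spec(R)$ is projective and birational, choose an effective Cartier divisor $D$ on $X'$, supported on the exceptional locus of $\phi'$, with $\mathcal O_{X'}(-D)$ relatively ample; after replacing $D$ by a large multiple I may assume the section algebra $S=\bigoplus_{n\gs 0}\Gamma(X',\mathcal O_{X'}(-nD))$ is generated in degree one over $R$. Because $D$ is effective, $\mathcal O_{X'}(-nD)\subseteq\mathcal O_{X'}$, so with $K:=\Gamma(X',\mathcal O_{X'}(-D))$ every graded piece lies inside $\Gamma(X',\mathcal O_{X'})=R$, the last equality holding since $\phi'$ is proper birational and $R$ is normal. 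Degree-one generation then gives $\Gamma(X',\mathcal O_{X'}(-nD))=K^n$ for all $n$, so $S=\bigoplus_{n\gs 0}K^n$ is the Rees algebra of $K$. Consequently
$$
\mathrm{Bl}_K(\Spec(R))=\Proj\Big(\bigoplus_{n\gs 0}K^n\Big)=\Proj(S)\cong X',
$$
the final isomorphism being the recovery of $X'$ as $\Proj$ of the section ring of the relatively ample sheaf $\mathcal O_{X'}(-D)$. Thus the blowup of $K$ is $X'$, which is normal, and $E_1',\dots,E_r'$ are the desired prime divisors with $\mathcal O_{v_i}=\mathcal O_{X',E_i'}$.

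The main obstacle is precisely this last step: arranging that the ordinary blowup of $K$ is already normal, rather than merely having $X'$ as its normalization. This is exactly what the choice of $K$ as the degree-one part of an ample section algebra secures, and it rests on two standard facts that must be checked in the excellent setting, namely the degree-one generation of $S$ for $D$ sufficiently ample (Serre vanishing / Castelnuovo--Mumford regularity) and the identification $\Gamma(X',\mathcal O_{X'})=R$ (finiteness of $\phi'_*\mathcal O_{X'}$ over $R$ together with normality). The earlier steps are comparatively routine: realizing a single divisorial valuation uses only that $R$ is excellent, and the domination argument of the second paragraph is a formal consequence of the fact that a divisorial valuation ring is a discrete valuation ring.
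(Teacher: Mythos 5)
The paper does not actually prove this proposition; it is quoted from \cite[\S 6]{CP}, so there is no internal proof to compare against. Your argument is nevertheless a correct overall strategy, and it is genuinely different from the one in the cited source. There one realizes each $v_i$ on the normalized blowup of an ideal $K_i$, passes to the product $K_0=K_1\cdots K_r$, and uses excellence (so that $\oplus_{n\gs 0}\overline{K_0^n}$ is a finitely generated $R$-algebra) to replace $K_0$ by $K=\overline{K_0^{\,n}}$ for $n\gg 0$, all of whose powers are integrally closed; the ordinary blowup of that $K$ then coincides with the normalized blowup and is therefore normal. You instead build a normal joint model $X'$ first (your reduction to the domain case, the realization of each $v_i$ on a normal projective model, and the domination argument on the normalized join are all fine) and then manufacture $K$ as the degree-one piece of the section ring of an ample effective divisor; this amounts to reproving, in the normal case, the classical fact that a projective birational morphism onto an affine integral Noetherian scheme is the blowup of an ideal. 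Both routes work: the one in \cite{CP} gets normality of the blowup from integral closedness of the powers of $K$, while yours gets it for free because $\Proj$ of the section ring recovers $X'$ itself.

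The one step you assert without justification --- and it is the step on which the whole conclusion rests --- is the existence of an effective Cartier divisor $D$ on $X'$ with $\mathcal O_{X'}(-D)$ relatively ample. The additional requirement that $D$ be supported on the exceptional locus should simply be dropped: it is used nowhere in your argument and is not always achievable. The effectivity, however, does need an argument. Take a relatively very ample $\mathcal L$ and a nonzero $s\in\Gamma(X',\mathcal L)$. Every prime divisor $F$ occurring in $\Div(s)$ satisfies $\fm_F\cap R\ne 0$ (otherwise $\mathcal O_{X',F}$ would contain all of $L$), so one can choose $0\ne g\in R$ with $\Div_{X'}(g)\gs \Div(s)$; then $gs^{-1}$ is a nonzero global section of $\mathcal L^{-1}$, exhibiting $\mathcal L\cong\mathcal O_{X'}(-D)$ with $D=\Div(gs^{-1})\gs 0$. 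With this inserted (and with the routine compactification needed to pass from ``$\mathcal O_{v_i}$ is a localization of a finitely generated normal $R$-subalgebra of $L$ at a height-one prime'' to ``prime divisor on a normal projective birational $R$-scheme'' made explicit), your proof is complete.
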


Let $\II=\{I_n\}_{n\gs 0}$ be a divisorial valuation defined by $v_1,\ldots, v_r$ and $\lambda_1,\ldots,\lambda_r$. Let $X,E_1,\ldots, E_r$, be as in \autoref{PropRep}. Let $D$ be the Weil $\RR$-divisor $D=\sum \lambda_iE_i$. By the definition of the sheaf  $\mathcal O_X(-\lceil nD\rceil)$,
we have 
$$
\Gamma(X,\mathcal O_X(-\lceil nD\rceil))=I(v_1)_{n\lambda_1}\cap\cdots\cap I(v_r)_{n\lambda_r}=I_n
$$
for every $n\gs 0$.  Therefore, the $R$-scheme $X$ and the Weil divisor $D$ completely determine  $\II$. For this reason, we sometimes write $\II$ as  $\II(D)=\{I(D)_n\}_{n\gs 0}$, and call $\II(D)$ a {\it representation} of $\II$.

 \subsection{Intersection theory over local rings}\label{subs:int_prod}

In this subsection, we give a quick outline of the intersection theory that is used in this paper. This theory is defined and developed in \cite{CM}. The reader is referred to \cite[\S 2]{CM} for details and proofs.

Thorup \cite{Thor} has developed a theory of rational equivalence for finite type schemes over a Noetherian scheme. 
He comments that most of Fulton's intersection theory over a field \cite{Fl} extends to this setting. 
 When $X$ is a finite type scheme over a field, 
Fulton \cite{Fl} defines rational equivalence and makes cycles modulo rational equivalence on $X$ into a graded ring by using  ordinary dimension. When $X$ is over a Noetherian base, Thorup \cite{Thor}  uses  relative dimension  to define   rational equivalence on $X$ and to make cycles modulo rational equivalence  a graded ring. 
 In Thorup's theory of rational equivalence, it  is necessary to throw out components of intersections which have smaller than expected dimension if the base is not equidimensional or not universally catenary.

Let $(R,\fm_R,\kk)$ be a $d$-dimensional Noetherian local ring and $\pi:X\rightarrow S:=\Spec(R)$ be a finite type morphism. For an integral subscheme $V$ of $X$ with image $T=\pi(V)$ in $S$, the {\it relative dimension} of $V$ over $S$ is
$$
\dim_SV=\trdeg(R(V)/R(T))-\dim(\mathcal O_{S,T}),
$$
here $R(V)$ denotes the function field of $V$, i.e., $R(V) = O_{X,V} /\fm_V$ where $O_{X,V}$ is the local ring of the generic point of $V$ and $\fm_V$ is its maximal ideal.
Assume now that  $\pi:X\rightarrow \Spec(R)$ is a proper birational morphism. Let $\alpha$ be a cycle of relative dimension $k$ on $X$ and $F_1,\ldots,F_t$ be Cartier divisors on $X$ with $t\gs k+d$ and at least one of the $F_i$ or $\alpha$ is supported above $\m_R$. Then  define an intersection number
$$
(F_1\cdot\ldots\cdot F_t\cdot \alpha)_R=\int_{\kk}F_1\cdot\ldots\cdot F_t\cdot\alpha,
$$
where the latter is  the degree of the cycle 
$$
F_1\cdot\ldots\cdot F_t\cdot\alpha=\sum n_ip_i\in A_{-d}(X/\Spec(R))=
A_0(X_{\kk}/\kk);
$$
here the $p_i$ are closed points of $X_{\kk}$ and $n_i\in \ZZ$.  Then the intersection number is the degree 
$$
\int_{\kk}F_1\cdot\ldots\cdot F_t\cdot\alpha=\sum n_i[R(p_i):\kk].
$$
We observe that the product 
$
(F_1\cdot,\ldots,\cdot F_t\cdot \alpha)_R$ 
can be  explicitly calculated using the intersection product of Snapper-Mumford-Kleiman \cite{Sn}, \cite{Mum}, \cite{Kl} for proper schemes over a field.

\subsection{Riemann-Roch theorems for curves}\label{SecRR}
In this subsection we summarize the famous Riemann-Roch Theorems for curves. 
A reference where these theorems are proven over an arbitrary field  is \cite[\S 7.3]{Lin}. The results that we need are stated in \cite[Remark 7.3.33]{Lin}.

Let $E$ be a regular integral 
projective curve over a field $\kk$. For $\mathcal F$ a coherent sheaf on $E$ define
$h^i(\mathcal F)=\dim_\kk \left(H^i(E,\mathcal F)\right)$ for every $i\gs 0$. 
Let $D=\sum_i m_ip_i$ be a (Weil) divisor on $E$, where $p_i$ are prime divisors on $E$ (closed points) and $a_i\in \ZZ$, and $\mathcal O_E
(D)$ its associated invertible sheaf. Define
$$
\deg(D)=\deg(\mathcal O_E(D))=\sum_i m_i[R(p_i):\kk].
$$
Here $R(p_i)=\mathcal O_{E_i,p_i}/\m_{p_i}$,
where $\m_{p_i}$ is the maximal ideal of $\mathcal O_{E_i,p_i}$. The {\it Riemann-Roch formula} is given by 
\begin{equation}\label{eq44}
	\chi(\mathcal O_{E}(D)):=h^0(\mathcal O_{E}(D))-h^1(\mathcal O_{E}(D))=\deg(D)+1-p_a(E),
\end{equation}
where $p_a(E)$ is the  arithmetic genus of $E$. 

If $\mathcal L$ is an invertible sheaf on $E$, then $\mathcal L\cong \mathcal O_E(D)$ for some divisor $D$ on $E$, and we may define $\deg(\mathcal L)=\deg(\mathcal O_X(D))=\deg(D)$.  

By Serre duality 
$H^1(E,\mathcal O_{E}(D))\cong H^0(E,\mathcal O_{E}(K_E-D))$,  
where $K_E$ is a canonical divisor on $E$. As a consequence, by \cite[Proposition 3.25(b)]{Lin} we have that the inequality
\begin{equation*}\label{eq43}
	\deg(D)>2p_a(E)-2=\deg(K_E)\,\,\mbox{ implies  }\,\,H^1(E,\mathcal O_{E}(D))=0.
\end{equation*}





We will apply the above formulas in the case that $E$ is a prime exceptional divisor, i.e., $E\subset\pi^{-1}(\fm_R)$, 
for a resolution of singularities $\pi:X\rightarrow\Spec(R)$ with $(R,\fm_R, \kk)$ a 2-dimensional local ring. We have that  $E$ is a projective nonsingular (by assumption) integral curve over $\kk$, 
and if   $D$ is a Cartier   divisor on $X$,  then $\deg(\O_X(D)\otimes \O_E)=(D\cdot E)_R$, where $(-)_R$ 
is the product developed in \cite{CM} and reviewed in \autoref{subs:int_prod}. To see this, let $j:E\rightarrow X$ be the inclusion. Then 
$$
D\cdot[E]=[j^*D]^1\in A_{-1}(X_{\kk}/\Spec(R))=A_0(X_{\kk}/\kk)
$$
by   \cite[formulas (6) and (9)]{CM}. Thus
$$
(D\cdot E)_R=\int_{X_{\kk}}D\cdot [E]=\int_{X_{\kk}}[j^*D]^1=\deg(\mathcal O_X(D)\otimes\mathcal O_E)
$$
by  \cite[formulas (10) and (7) ]{CM}.


\subsection{Divisors of rational functions and of sections of invertible sheaves}\label{SecCart}

Let $X$ be an integral Noetherian scheme.  The fundamental cycle $[\divv(r)]\in Z(X)$ of $r\in R(X)^*$ is defined in \cite[Chapter 1]{Fl} and in \cite[Subsection 2.1]{CM}. In this paper we  denote this cycle by $\Div(r)$.

Let  $\mathcal L$ be an invertible sheaf on $X$. There exists an open cover $\{U_i\}$ of $X$ and for each $i$ a local generator $\tau_i$ of $\mathcal L|U_i$ such that $\mathcal L|U_i=\tau_i\mathcal O_{U_i}$. Let $0\ne \sigma\in\Gamma(X,\mathcal L)$. 
There exist nonzero  $f_i\in \Gamma(U_i,\mathcal O_X)$ such that $\sigma|U_i=\tau_if_i$ for every $i$ and  $f_i|U_i\cap U_j=f_j|U_i\cap U_j$ for every $i,j$. The data  $\{(f_i,U_i)\}$ is  an effective Cartier divisor on $X$ which is called the divisor of $\sigma$ and written as $\Div(\sigma)$. The support $\Supp(\Div(\sigma))$ of $\sigma$ is the 
 closed codimension one subscheme $Z_{\sigma}$ of $X$ such that the ideal sheaf of $Z_{\sigma}$ in $X$ is given by $\mathcal I_{Z_\sigma}|U_i=f_i\mathcal O_{X_i}$.

 The following proposition is used in the proof of \autoref{PropLim2}

 \begin{proposition}\label{LemInj} Let  $X$ be an integral scheme, $\mathcal L$ and $\mathcal M$ be invertible sheaves on $X$, and   $Y\subset X$ be a closed integral subscheme. Suppose that $0\ne \sigma\in \Gamma(X,\mathcal L)$. Then
 \begin{equation*}\label{LemInj1}
 \mathcal L^{-1}\otimes \M
 \xrightarrow{\sigma\otimes 1}
 \mathcal O_X\otimes\mathcal M\cong \mathcal M
 \end{equation*} 
 is injective. Moreover,  
  the induced map
\begin{equation*}\label{LemInj2}
\mathcal L^{-1}\otimes\mathcal M\otimes\mathcal O_Y
 \xrightarrow{\sigma\otimes1\otimes 1}
\mathcal O_X\otimes\mathcal M\otimes\mathcal O_Y\cong\mathcal M\otimes\mathcal O_Y
\end{equation*}
is injective if and only if $Y$ is not in the support of $\Div(\sigma)$. 
In particular, if $\mathcal L\cong \mathcal O_X(D)$, for some divisor $D$ on $X$, and $\sigma\in \Gamma(X,\mathcal L)$ is such that $\Div(\sigma)=D$ and $Y$ is an integral subscheme of $X$, then 
$$
\mathcal O_X(-D)\otimes\mathcal M\otimes\mathcal O_Y
\xrightarrow{\sigma\otimes 1\otimes 1}
\M\otimes\mathcal O_Y
$$
is injective if and only if $Y$ is not in the support of $D$.
\end{proposition}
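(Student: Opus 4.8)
The plan is to reduce everything to the local description of $\sigma$ in terms of the Cartier data $\{(f_i,U_i)\}$ recorded just before the statement, and to exploit the basic fact that on an integral scheme a nonzero section of an invertible sheaf cannot vanish on any nonempty open set.

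First I would establish the injectivity of $\mathcal L^{-1}\otimes\mathcal M\xrightarrow{\sigma\otimes 1}\mathcal M$ by a local computation. After refining the cover so that $\mathcal M|_{U_i}=\mu_i\mathcal O_{U_i}$ as well, the identifications $\tau_i^{-1}\otimes\mu_i\mapsto 1$ and $\mu_i\mapsto 1$ turn $\sigma\otimes 1$ on $U_i$ into multiplication by $f_i\in\Gamma(U_i,\mathcal O_X)$, where $\sigma|_{U_i}=\tau_i f_i$. Since $X$ is integral, $\mathcal L$ is torsion-free and injects into its stalk $\mathcal L_\eta$ at the generic point $\eta$; as $\sigma\ne 0$ this forces $\sigma_\eta\ne 0$, so $\sigma$ is nonzero on every nonempty open, and in particular each $f_i$ is a nonzero element of the domain $\Gamma(U_i,\mathcal O_X)$. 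Multiplication by a nonzero element of a domain is injective, so the map is injective on each $U_i$ and hence injective.

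Next, for the induced map on $Y$, the key observation is that applying $-\otimes_{\mathcal O_X}\mathcal O_Y$ (that is, pulling back along the closed immersion $i\colon Y\hookrightarrow X$) identifies $\mathcal L^{-1}\otimes\mathcal M\otimes\mathcal O_Y\xrightarrow{\sigma\otimes 1\otimes 1}\mathcal M\otimes\mathcal O_Y$ with the multiplication map $(\mathcal L|_Y)^{-1}\otimes(\mathcal M|_Y)\xrightarrow{(\sigma|_Y)\otimes 1}\mathcal M|_Y$ attached to the restricted section $\sigma|_Y=i^*\sigma\in\Gamma(Y,\mathcal L|_Y)$, because $i^*$ is a tensor functor. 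Since $Y$ is a closed integral subscheme and $\mathcal L|_Y,\mathcal M|_Y$ are invertible on $Y$, the first part applies verbatim with $Y$ in place of $X$: if $\sigma|_Y\ne 0$ the map is injective, while if $\sigma|_Y=0$ it is the zero map on the nonzero sheaf $\mathcal L|_Y$ and hence not injective. Thus injectivity is equivalent to $\sigma|_Y\ne 0$. It then remains to translate this vanishing into the containment condition: locally $\sigma|_{Y\cap U_i}=(\tau_i|_Y)\,\bar f_i$ with $\bar f_i$ the image of $f_i$ in $\Gamma(Y\cap U_i,\mathcal O_Y)$ and $\tau_i|_Y$ a generator of $\mathcal L|_{Y\cap U_i}$, so $\sigma|_{Y\cap U_i}=0$ iff $f_i$ vanishes on $Y\cap U_i$, i.e.\ iff $Y\cap U_i\subset V(f_i)=Z_\sigma\cap U_i$. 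Because $Y$ is irreducible and $Z_\sigma=\Supp(\Div(\sigma))$ is closed, this holds for some (equivalently, by the generic-point argument, every) nonempty $Y\cap U_i$ precisely when $Y\subset Z_\sigma$. Hence the induced map is injective iff $Y\not\subset\Supp(\Div(\sigma))$. The final ``in particular'' is then immediate, since $\Div(\sigma)=D$ gives $\Supp(\Div(\sigma))=\Supp(D)$.

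The main obstacle — indeed essentially the only delicate point — is that restriction to the closed subscheme $Y$ is not left exact, so one cannot simply ``restrict the injection'' from the first part. The clean way around this is the reinterpretation of the induced map as multiplication by the restricted section $\sigma|_Y$ on the integral scheme $Y$, after which the first part together with the vanishing-locus description of $\Div(\sigma)$ closes the argument. One must also take care to use the principle ``a nonzero section of an invertible sheaf on an integral scheme is nonzero on every nonempty open'' consistently: once to deduce injectivity from $\sigma|_Y\ne 0$, and again to pass from the local containment $Y\cap U_i\subset Z_\sigma$ to the global containment $Y\subset Z_\sigma$.
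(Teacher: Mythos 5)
Your proposal is correct and follows essentially the same route as the paper: both parts reduce to the local Cartier data $\sigma|_{U_i}=\tau_i f_i$ and to the injectivity of multiplication by a nonzero element of a domain, with the $Y$-statement coming down to whether the image $\bar f_i$ in $\mathcal O_Y(Y\cap U_i)$ vanishes, which by integrality of $Y$ happens exactly when $Y\subset\Supp(\Div(\sigma))$. Your repackaging of the second part as ``apply part one to $Y$ with the restricted section $i^*\sigma$'' is a slightly cleaner way of organizing the same local computation that the paper carries out directly, and it correctly flags the one delicate point (that one cannot just restrict the injection, since $-\otimes\mathcal O_Y$ is not left exact).
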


 \begin{proof} With the notation given at the beginning of this subsection,  $\mathcal L^{-1}|U_i=\frac{1}{\tau_i}\mathcal O_{U_i}$, and so
 $$
 (\mathcal L^{-1}\otimes\mathcal M)|U_i
 \xrightarrow{\sigma\otimes 1}
 (\mathcal O_X\otimes\mathcal M)|U_i
 $$
 is the map 
 $
 \frac{h}{\tau_i}\otimes a\mapsto f_ih\otimes a
 $
 for $h\in\mathcal O_{U_i}$. This map is injective since $f_i\ne 0$. 
 
 The map
 $$
 (\mathcal L^{-1}\otimes\mathcal M\otimes\mathcal O_Y)|U_i
 \xrightarrow{\sigma\otimes1\otimes 1}
(\mathcal O_X\otimes\mathcal M\otimes\mathcal O_Y)|U_i
$$
is then given by
$
\frac{h}{\tau_i}\otimes a\otimes b\mapsto f_ih\otimes a\otimes b=1\otimes a\otimes \overline{f_ih} b
$, 
where $\overline{f_ih}$ is the class of $f_ih$ in $\mathcal O_Y|U_i$. Now $\overline{f_ih}=0$ if and only if $Y$ is not in the support of $\Div(\sigma)$ since $Y$ is integral, and the conclusion follows. 
 \end{proof}

\section{Degree functions of graded families of ideals} \label{SecAnRam}

\subsection{Degree functions on analytically unramified local domains}\label{SubSecDeg}
In this subsection, let $R$ be a $d$-dimensional analytically unramified local domain and let
$I$ be an $\fm_R$-primary ideal. Let $\pi:Y:=\Proj(\oplus_{n\gs 0}\,\overline{I^n})\rightarrow\Spec(R)$ be the natural morphism. By \cite[Corollary 9.21]{HS}, $\oplus_{n\gs 0}\,\overline{I^n}$ is a finitely generated $R$-algebra. By \cite[Proposition 4.9]{CM}, $Y$ is a projective $R$-scheme and the Rees valuations of $I$ are the 
canonical valuations $v_E$ of the valuation rings $\mathcal O_{Y,E}$, where $E$ are the integral components of $\pi^{-1}(\fm_R)$.

Let $\mathcal L_Y=I\mathcal O_Y$. Since $\mathcal L_Y$ is ample on $Y$,
$(\mathcal L_Y^{d-1}\cdot E)_R>0$ if and only if $\dim(E)=d-1$, which holds if and only if $v_E\in \Div(\fm_R)$, since $\dim (E)=\trdeg_{\kk}(R(E))$ as $E$ is a projective variety over $\kk$.

Let $\phi:W\rightarrow \Spec(R)$ be proper and birational. 
For $v\in \Div(\fm_R)$, let $C(W,v)$ be the center of $v$ on $W$, that is,  the integral subscheme  of $W$ that is the closure in $W$ of the unique point $q$ of $W$ such that the local ring $\mathcal O_{W,q}$ is dominated by the valuation ring  of $v$.

\begin{lemma}\label{Rval} For $v\in \Div(\fm_R)$, $(\mathcal L_Y^{d-1}\cdot C(Y,v))_R>0$ if and only if $v$ is a Rees valuation of $I$. 
\end{lemma}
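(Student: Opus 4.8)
The idea is to use the center $C(Y,v)$ as a bridge: I would show that, for $v\in\Div(\fm_R)$, both the positivity $(\mathcal L_Y^{d-1}\cdot C(Y,v))_R>0$ and the property of being a Rees valuation of $I$ are equivalent to the single geometric condition $\dim C(Y,v)=d-1$. First I would record the basic setup: since $v$ is centered at $\fm_R$ and $\pi$ is proper, the center $C(Y,v)$ maps to $\fm_R$, so it is an integral closed subscheme of the fiber $\pi^{-1}(\fm_R)$; in particular it is a projective variety over $\kk$ with $\dim C(Y,v)\ls d-1$ and $\trdeg_\kk R(C(Y,v))=\dim C(Y,v)$.

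For the implication ($\Leftarrow$), suppose $v$ is a Rees valuation of $I$. By the description recalled before the lemma, $\mathcal O_v=\mathcal O_{Y,E}$ for some integral component $E$ of $\pi^{-1}(\fm_R)$, so the point of $Y$ whose local ring is dominated by $\mathcal O_v$ is the generic point of $E$; thus $C(Y,v)=E$. Since $v=v_E\in\Div(\fm_R)$ by hypothesis, the discussion preceding the lemma gives $\dim E=d-1$, and then ampleness of $\mathcal L_Y$ yields $(\mathcal L_Y^{d-1}\cdot E)_R>0$, as desired.

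For ($\Rightarrow$), suppose $(\mathcal L_Y^{d-1}\cdot C(Y,v))_R>0$. The first step is to extract the dimension from this positivity using the bookkeeping of \autoref{subs:int_prod}: the cycle $[C(Y,v)]$ has relative dimension $\dim_S C(Y,v)=\trdeg_\kk R(C(Y,v))-\dim R=\dim C(Y,v)-d$, and intersecting $t=d-1$ Cartier divisors with a cycle of relative dimension $k$ produces a nonzero number only when $t=k+d$; hence $\dim C(Y,v)=d-1$. Consequently $C(Y,v)$ is a $(d-1)$-dimensional integral closed subscheme of the at most $(d-1)$-dimensional fiber $\pi^{-1}(\fm_R)$, so it must be one of the integral components $E$. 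The last step is to identify the valuation: $\mathcal O_v$ dominates $\mathcal O_{Y,E}$, which is a discrete valuation ring by \autoref{TheoremCMAC}(2) (equivalently, because $Y$ is normal and $E$ has codimension one); since a valuation ring is maximal among the local subrings of its fraction field under domination, this forces $\mathcal O_v=\mathcal O_{Y,E}$, i.e. $v=v_E$ is a Rees valuation of $I$.

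I expect the main obstacle to be the ($\Rightarrow$) direction, and within it the passage from positivity of the intersection number to the dimension equality $\dim C(Y,v)=d-1$: this is where one must invoke the relative-dimension formalism of Thorup correctly (taking care that $\Spec R$ is local, so $\dim\mathcal O_{S,\{\fm_R\}}=d$), followed by the domination argument, which relies essentially on the normality of $Y$ to guarantee that $\mathcal O_{Y,E}$ is already a valuation ring. Once these two points are in place, the ($\Leftarrow$) direction and the reduction of both conditions to $\dim C(Y,v)=d-1$ are routine.
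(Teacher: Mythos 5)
Your proposal is correct and follows essentially the same route as the paper: both directions reduce to the equivalence of positivity with $\dim C(Y,v)=d-1$ via ampleness of $\mathcal L_Y$, and the forward direction concludes by the domination argument that forces $\mathcal O_v=\mathcal O_{Y,E}$ because $\mathcal O_{Y,E}$ is a one-dimensional valuation ring. The only caveat is your parenthetical that $\mathcal O_{Y,E}$ is a DVR ``because $Y$ is normal'': in this subsection $R$ is only assumed analytically unramified, so $Y$ need not be normal, and the correct justification is the one you also cite, namely \autoref{TheoremCMAC}(2).
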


\begin{proof} If $v\in \Div(\fm_R)$ is a Rees valuation of $I$, then  $(\mathcal L_Y^{d-1}\cdot C(Y,v))_R>0$ as shown above.
Suppose that $v\in \Div(\fm_R)$ is such that $(\mathcal L_Y^{d-1}\cdot C(Y,v))_R>0$. Then $E=C(Y,v)$ is an integral subscheme of $\pi^{-1}(\fm_R)$ of dimension $d-1$ such that $\mathcal O_{Y,E}$ is dominated by $\mathcal O_v$. Since $\mathcal O_{Y,E}$ is a one dimensional valuation ring we have that $\mathcal O_v=\mathcal O_{Y,E}$ and so $v$ is equivalent to $v_E$.
\end{proof}

The following theorem is a version of a theorem of Rees on degree functions \cite{Re}, as  explained in the introduction.

\begin{theorem}\label{Bupval} Let $(R,\fm_R,\kk)$ be a  $d$-dimensional 
 analytically unramified local domain and $I\subset R$ be an $\fm_R$-primary ideal.  Then $\oplus_{n\gs 0}\,\overline{I^n}$ is a finitely generated $R$-algebra, $Y=\Proj(\oplus_{n\gs 0}\,\overline{I^n})$ is a projective $R$-scheme and $I\mathcal O_Y$ is invertible. 
Assume  that $\phi:W\rightarrow \Spec(R)$ is a proper birational morphism such that 
 $W$ is normal, and $\L_W= I\mathcal O_W$ is invertible. Then  for any $0\ne x\in R$,
\begin{equation}\label{eqF1}
e(I(R/xR))=\sum_{v\in \Div(\fm_R)}v(x)(\L_W^{d-1}\cdot C(W,v))_{R}.
 \end{equation}
Furthermore ,
for $v\in \Div(\fm_R)$, $(\mathcal L_W^{d-1}\cdot C(W,v))_R>0$ if and only if $v$ is a Rees valuation of $I$.
\end{theorem}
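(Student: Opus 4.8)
The plan is to deduce the formula for an arbitrary normal model $W$ from the special case $W=Y:=\Proj(\oplus_{n\gs0}\overline{I^n})$, which is essentially \autoref{TheoremCMAC}(2). The opening assertions are immediate and already recorded above: $\oplus_{n\gs0}\overline{I^n}$ is a finitely generated $R$-algebra by \cite[Corollary 9.21]{HS} since $R$ is analytically unramified, and by \cite[Proposition 4.9]{CM} the scheme $Y$ is a projective $R$-scheme with $\mathcal L_Y:=I\mathcal O_Y$ an invertible, ample sheaf. Let $\pi:Y\to\Spec(R)$ be the structure morphism.

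First I would rewrite the sum in \autoref{TheoremCMAC}(2), which runs over the integral components $E$ of $\pi^{-1}(\fm_R)$, as a sum over all $v\in\Div(\fm_R)$. For $v\in\Div(\fm_R)$ the center $C(Y,v)$ is an integral subscheme of $\pi^{-1}(\fm_R)$, and by the Snapper--Kleiman description reviewed in \autoref{subs:int_prod} and \autoref{SecRR}, $(\mathcal L_Y^{d-1}\cdot C(Y,v))_R$ is the coefficient of $m^{d-1}$ in a polynomial of degree $\dim C(Y,v)$ in $m$, hence vanishes whenever $\dim C(Y,v)<d-1$. If instead $\dim C(Y,v)=d-1$, then, $Y$ being normal, $\mathcal O_{Y,C(Y,v)}$ is a DVR dominated by $\mathcal O_v$, so it equals $\mathcal O_v$ and $v=v_{C(Y,v)}$ is a Rees valuation with $C(Y,v)$ one of the components $E$. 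Thus the only nonzero contributions come from Rees valuations, and \autoref{TheoremCMAC}(2) reads $e(I(R/xR))=\sum_{v\in\Div(\fm_R)}v(x)(\mathcal L_Y^{d-1}\cdot C(Y,v))_R$.

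Next I would reduce a general $W$ to $Y$. Since $W$ is normal and $I\mathcal O_W$ is invertible, the universal property of the blowup of $\Spec(R)$ along $I$ together with the universal property of normalization produces a proper birational $R$-morphism $\rho:W\to Y$ with $\rho^*\mathcal L_Y=I\mathcal O_W=\mathcal L_W$. I would then establish the termwise identity $(\mathcal L_W^{d-1}\cdot C(W,v))_R=(\mathcal L_Y^{d-1}\cdot C(Y,v))_R$ for every $v\in\Div(\fm_R)$ by means of the projection formula for $(-)_R$ from \cite[\S2]{CM}: writing $q_W,q_Y$ for the generic points of the centers, one has $\rho(q_W)=q_Y$, so $\rho_*[C(W,v)]$ is either $0$ or a positive multiple of $[C(Y,v)]$, and $(\mathcal L_W^{d-1}\cdot C(W,v))_R=(\mathcal L_Y^{d-1}\cdot\rho_*[C(W,v)])_R$. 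If $v$ is a Rees valuation, then $\mathcal O_{Y,q_Y}=\mathcal O_v$ is a DVR and the inclusions $\mathcal O_{Y,q_Y}\subseteq\mathcal O_{W,q_W}\subseteq\mathcal O_v$ force $\mathcal O_{W,q_W}=\mathcal O_v$; hence $C(W,v)$ is divisorial, $\rho$ restricts to a birational map $C(W,v)\to C(Y,v)$, and $\rho_*[C(W,v)]=[C(Y,v)]$, yielding the identity. If $v$ is not a Rees valuation, then $\dim C(Y,v)<d-1$ by the previous paragraph, and both sides vanish by the Snapper--Kleiman vanishing (the right side directly, the left side because $\rho_*[C(W,v)]$ is supported on $C(Y,v)$). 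Summing over $v$ with weights $v(x)$ transports the formula from $Y$ to $W$, giving \autoref{eqF1}; the closing ``furthermore'' assertion is then immediate from this termwise identity and \autoref{Rval}.

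I expect the main obstacle to lie in the bookkeeping around centers and dimensions: checking that the projection formula and the vanishing $(\mathcal L^{d-1}\cdot C)_R=0$ for $\dim C<d-1$ are legitimate in Thorup's relative intersection theory, and, above all, the domination argument showing that a Rees valuation of $I$ has a \emph{divisorial} center on every normal $W$ with $I\mathcal O_W$ invertible, so that no positive term is lost in passing from $Y$ to $W$. The existence and properness of $\rho:W\to Y$, although standard, also rely essentially on the normality of $W$.
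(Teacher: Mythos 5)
Your proposal is correct in substance and its core mechanism --- factor $\lambda\colon W\to Y$ through the normalized blowup using normality of $W$, apply the projection formula to get $(\mathcal L_W^{d-1}\cdot C(W,v))_R=(\mathcal L_Y^{d-1}\cdot \lambda_*C(W,v))_R$, and analyze the pushforward by cases on $\dim C(Y,v)$ --- is exactly how the paper proves the ``furthermore'' clause. The genuine difference is in how \autoref{eqF1} itself is obtained: the paper simply cites \cite[Corollary 4.16]{CM} for the formula on an arbitrary normal $W$, whereas you derive it from \autoref{TheoremCMAC}(2) by transporting the formula from $Y$ to $W$ termwise. That route forces you to prove the multiplicity $\mu=[R(C(W,v)):R(C(Y,v))]$ equals $1$ for Rees valuations (via $\mathcal O_{Y,q_Y}\subseteq\mathcal O_{W,q_W}\subseteq\mathcal O_v=\mathcal O_{Y,q_Y}$), an observation the paper never needs since it keeps $\mu$ general and only uses positivity; your argument for it is correct and is what makes the self-contained derivation work. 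One small inaccuracy: you justify that $\mathcal O_{Y,C(Y,v)}$ is a DVR by saying ``$Y$ being normal,'' but $Y=\Proj(\oplus_{n\gs 0}\overline{I^n})$ need not be normal when $R$ is not normal (the $\overline{I^n}$ are integral closures in $R$, not in the quotient field of the Rees algebra beyond degree considerations). The fact you need --- that the local rings at the integral components of $\pi^{-1}(\fm_R)$ are DVRs whose valuations are the Rees valuations --- is nevertheless available from \cite[Proposition 4.9]{CM}, which is exactly what \autoref{Rval} invokes, so this is a misattribution rather than a gap.
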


\begin{proof} 
 The statement about $Y$ follows from the first paragraph of this subsection and \eqref{eqF1} is immediate from \cite[Corollary 4.16]{CM}.

Since  $I\mathcal O_W$ is invertible, there is a factorization $W\rightarrow \Proj(\oplus_{n\gs 0}\,I^n)$, and since $W$ is normal, there is then a factorization $\lambda:W\rightarrow Y$. Thus by the projection formula \mbox{\cite[Proposition 2.6]{CM}}, for $v\in \Div(\fm_R)$, 
$$
(\mathcal L_W^{d-1}\cdot C(W,v))_R=((\lambda^*\mathcal L_Y)^{d-1}\cdot C(W,v))_R
=(\mathcal L_Y^{d-1}\cdot \lambda_* C(W,v))_R.
$$
The image of $C(W,v)$ in $Y$ is $\lambda(C(W,v))=C(Y,v)$. If $\dim (C(Y,v))=\dim (C(W,v))$, then the index
$\mu=[R(C(W,v)):R(C(Y,v)]$ is finite and positive. 
As defined in \cite[\S 1.4]{Fl} or \cite[\S 2.1]{CM}, 
$$
\lambda_*C(W,v)=\left\{\begin{array}{ll} 0 & \mbox{ if }\dim (C(Y,v))<\dim (C(W,v))\\
\mu C(Y,v) & \mbox{ if } \dim (C(Y,v))=\dim (C(W,v)).
\end{array}\right.
$$
Thus
\begin{equation}\label{eqG}
(\mathcal L_W^{d-1}\cdot C(W,v))_R=\left\{ \begin{array}{ll}
0& \mbox{if } \dim (C(Y,v))<\dim (C(W,v))\\
\mu(\mathcal L_Y^{d-1}\cdot C(Y,v))_R&\mbox{if }\dim (C(Y,v))=\dim (C(W,v)).
\end{array}\right.
\end{equation}
Suppose that $v$ is a Rees valuation. Then $\dim (C(V,v))=d-1$ by the proof of \autoref{Rval}. Thus $\dim (C(W,v))=d-1$ since $\lambda(C(W,v)) =C(Y,v)$ and $\lambda$ is birational.
 Thus $$(\mathcal L_W^{d-1}\cdot C(W,v))_R=\mu(\mathcal L_Y^{d-1}\cdot C(Y,v))_R>0$$ by \autoref{eqG} and \autoref{Rval}.

Suppose that $(\mathcal L_W^{d-1}\cdot C(W,v))_R>0$. Then $\dim (C(W,v))=\dim (C(Y,v))$ by \autoref{eqG}, and so 
$$\mu(\mathcal L_Y^{d-1}\cdot C(Y,v))_R=(\mathcal L_W^{d-1}\cdot C(W,v))_R>0,
$$
which implies that $v$ is a Rees valuation by \autoref{Rval}.
\end{proof}


\subsection{Degree functions of graded families}\label{DegGr}

The next theorem expresses  the multiplicity of a graded family in terms of the intersection products of its ideal members.  Here $N(S)$ denotes the nilradical of the ring $S$.

\begin{theorem}\label{Theorem5} 
Let $(R,\fm_R,\kk)$ be a $d$-dimensional local ring such that $\dim (N(\hat R))<d$. 
 Let $\II:=\{I_n\}_{n\gs 0}$ be a graded family of $\fm_R$-primary ideals and let $Y_n\rightarrow \Spec(R)$ be  proper birational morphisms such that $I_n\mathcal O_{Y_n}$ is invertible. 
    Then
	$$
	e(\II)=
	\lim_{n\rightarrow\infty}-\frac{((I_n\mathcal O_{Y_n})^d)_R}{n^d}.
	$$
\end{theorem}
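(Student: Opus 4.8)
The plan is to reduce the statement to an asymptotic comparison of ordinary multiplicities that no longer mentions the models $Y_n$. First I would apply \autoref{TheoremCMAC}(1) to each $\fm_R$-primary ideal $I_n$ and the given proper birational morphism $Y_n\rightarrow\Spec(R)$, on which $I_n\mathcal O_{Y_n}$ is invertible, obtaining $-((I_n\mathcal O_{Y_n})^d)_R=e(I_n)$ for every $n$. Thus the right-hand side equals $\lim_{n\rightarrow\infty}e(I_n)/n^d$ (in particular it is independent of the choices of $Y_n$), and the theorem is reduced to the equality
$$
e(\II)=\lim_{n\rightarrow\infty}\frac{e(I_n)}{n^d}.
$$
The hypothesis $\dim(N(\hat R))<d$ ensures, by \cite[Theorem 1.1]{C2}, that the limit defining $e(\II)$ exists, which I will use freely below.

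For the inequality $\liminf_{n\rightarrow\infty}e(I_n)/n^d\gs e(\II)$ I would only use that $\II$ is a graded family. Then $I_n^m\subset I_{nm}$ for all $m\gs 1$, so $\ell_R(R/I_n^m)\gs\ell_R(R/I_{nm})$, and passing to the Hilbert--Samuel limit in $m$ gives
$$
e(I_n)=\lim_{m\rightarrow\infty}\frac{d!\,\ell_R(R/I_n^m)}{m^d}\gs\lim_{m\rightarrow\infty}\frac{d!\,\ell_R(R/I_{nm})}{m^d}=n^d\,e(\II),
$$
the last equality holding because $\{nm\}_m$ is a subsequence of indices along which the (existing) limit $e(\II)$ is attained. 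Dividing by $n^d$ yields $e(I_n)/n^d\gs e(\II)$ for every $n$.

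The main obstacle is the reverse inequality $\limsup_{n\rightarrow\infty}e(I_n)/n^d\ls e(\II)$. From the exact sequence $0\rightarrow I_{nm}/I_n^m\rightarrow R/I_n^m\rightarrow R/I_{nm}\rightarrow 0$ one sees that this is equivalent to the vanishing, as $n\rightarrow\infty$, of the normalized defect
$$
\delta_n:=\lim_{m\rightarrow\infty}\frac{d!\,\ell_R(I_{nm}/I_n^m)}{(nm)^d}\gs 0,
$$
which measures how far the powers $I_n^m$ are from the members $I_{nm}$. The inclusion $I_n^m\subset I_{nm}$ is of no help here, and the purely multiplicative relation $e(I_{nm})/(nm)^d\ls e(I_n)/n^d$ that it produces is by itself too weak to force the full sequence to converge to its infimum. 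Instead I would appeal to the asymptotic machinery underlying the existence of $e(\II)$: under $\dim(N(\hat R))<d$ the normalized colengths are governed by a convex (Okounkov-type) body attached to $\II$ whose Euclidean volume is $e(\II)/d!$, and the approximation results of \cite{ELS}, \cite{LM} and \cite{C2} identify $\lim_{n\rightarrow\infty} e(I_n)/n^d$ with this same volume; equivalently they force $\delta_n\rightarrow 0$.

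Combining the two estimates gives $\lim_{n\rightarrow\infty}e(I_n)/n^d=e(\II)$, and undoing the first reduction yields the stated formula. I expect all the difficulty to sit in the previous paragraph: the reduction via \autoref{TheoremCMAC}(1) and the lower bound are formal, while the vanishing of the defect $\delta_n$ is exactly where the hypothesis on $N(\hat R)$ and the Okounkov-body volume theory are genuinely needed.
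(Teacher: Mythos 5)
Your proposal is correct and follows essentially the same route as the paper: the paper's proof consists precisely of the reduction via \autoref{TheoremCMAC}(1) together with a direct citation of the Volume = Multiplicity formula $e(\II)=\lim_{n\rightarrow\infty}e(I_n)/n^d$ from \cite[Theorem 1.1]{Cvol}, which is valid exactly under the hypothesis $\dim(N(\hat R))<d$. Your elementary derivation of the lower bound and your deferral of the harder upper bound to the Okounkov-body machinery of \cite{ELS}, \cite{LM}, \cite{C2} simply unpack the content of that cited theorem, so nothing is missing.
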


\begin{proof} By the Volume = Multiplicity formula  of  \mbox{\cite[Theorem 1.1]{Cvol}},
	$
	e(\II)=\lim_{n\rightarrow\infty}\frac{e(I_n)}{n^d}.
	$
	The conclusion now follows from \autoref{TheoremCMAC}(1).
\end{proof}

The following theorem follows from \autoref{Theorem5} and   \cite[Theorem 8.3]{C3}.

\begin{proposition}\label{rem:anti_pos} Let$(R,\fm_R,\kk)$ be a normal local ring which is essentially of finite type over a field. 
Let $\II=\{I(nD)\}_{n\gs 0}$ be a divisorial filtration of $\m_R$-primary ideals. Then
 $$
e(\II)
=\lim_{n\rightarrow\infty}-\frac{((I(nD)\mathcal O_{Y_n})^d)_R}{n^d}
=-\langle(-D)^d\rangle,
$$
where the latter is the anti-positive intersection product of $D$ defined in  \cite[\S 7]{C3}.
\end{proposition}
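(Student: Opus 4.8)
The plan is to assemble the two displayed identities from the cited results: the first equality comes from \autoref{Theorem5}, and the second from \cite[Theorem 8.3]{C3}. The only real work is to check that the hypotheses of \autoref{Theorem5} are met in the present setting. I would begin by observing that a divisorial filtration $\II=\{I(nD)\}_{n\gs 0}$ is in particular a filtration, hence a graded family, of $\fm_R$-primary ideals, so the one hypothesis of \autoref{Theorem5} that requires verification is $\dim(N(\hat R))<d$.

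To verify this I would use the two standing assumptions on $R$. Being a normal local ring, $R$ is a domain (consistent with the standing hypothesis under which divisorial filtrations are defined), and being essentially of finite type over a field, $R$ is excellent. Since $R$ is excellent and normal, its $\fm_R$-adic completion $\hat R$ is again normal, hence reduced, so that $N(\hat R)=0$ and $\dim(N(\hat R))=-\infty<d$. Thus \autoref{Theorem5}, applied to the graded family $\{I(nD)\}$, yields the first equality
$$
e(\II)=\lim_{n\rightarrow\infty}-\frac{((I(nD)\O_{Y_n})^d)_R}{n^d}.
$$
Here the quantity $((I(nD)\O_{Y_n})^d)_R=-e(I(nD))$ is independent of the chosen birational model $Y_n$ with $I(nD)\O_{Y_n}$ invertible, by \autoref{TheoremCMAC}(1), so the limit is unambiguous and matches whatever models are used downstream.

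It then remains to identify this common value with $-\langle(-D)^d\rangle$, and this is exactly the computation carried out in \cite[Theorem 8.3]{C3}; it is precisely the divisorial structure $\II=\II(D)$ (the existence of the Weil $\RR$-divisor $D$ and its representation) together with the assumption that $R$ is normal and essentially of finite type over a field that makes the anti-positive intersection product $\langle(-D)^d\rangle$ meaningful and allows us to invoke that result. Chaining the two identities completes the proof. I do not expect a genuine obstacle, since both equalities are quoted from established theorems; the single point deserving care is the verification that $\dim(N(\hat R))<d$, which is exactly where the normality and excellence of $R$ enter and which is what guarantees that the limit in \autoref{Theorem5} exists and computes $e(\II)$.
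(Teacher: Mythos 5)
Your proposal is correct and follows exactly the route the paper takes: the paper derives this proposition directly from \autoref{Theorem5} together with \cite[Theorem 8.3]{C3}, and your verification that $\dim(N(\hat R))<d$ (via excellence and normality forcing $\hat R$ to be reduced) is the right way to justify applying \autoref{Theorem5}. No issues.
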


In the next theorem we analyze the behavior of the multiplicity of a graded family modulo a radical principal ideal. We observe that in our setting of Noetherian normal domain such a principal ideal always exist, see \autoref{rem:rad_princ_ideal}. 

\begin{theorem}\label{Theorem4} 
Let $(R,\fm_R,\kk)$ be a $d$-dimensional  excellent normal   local ring.  
 Let $\II:=\{I_n\}_{n\gs 0}$ be a graded family of $\fm_R$-primary ideals.
Let  $0\neq x\in  R$ be such that $R/xR$ is reduced. For $n\gs 1$, let $Y_n$ be a proper birational morphism such that $Y_n$ is normal and $I_n\mathcal O_{Y_n}$ is invertible. 
Denote by $\II(R/xR)=\{I_n(R/xR)\}_{n\gs 0}$ the image of $\II$ in $R/xR$, which is a graded family of $\m_{R/xR}$-primary ideals in $R/xR$.  
 Then
	$$
	e(\II(R/xR))=\lim_{n\rightarrow \infty}\left(\sum_{v\in {\rm Div}(\m_R)}v(x)\frac{((I_n\mathcal O_{Y_n})^{d-1}\cdot C(Y_n,v))_R}{n^{d-1}}\right).
	$$
\end{theorem}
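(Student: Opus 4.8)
The plan is to run the Volume~$=$~Multiplicity input underlying \autoref{Theorem5}, but now on the quotient $R/xR$, and then to expand each resulting multiplicity by the degree function formula of \autoref{Bupval}. Concretely, I would first reduce $e(\II(R/xR))$ to a limit of the degree functions $e(I_n(R/xR))$, and then feed each of these into \autoref{Bupval} to produce the asserted inner sum over $\Div(\fm_R)$.

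For the reduction, observe first that a normal Noetherian local ring is a domain and an excellent ring is universally catenary; hence $x$ is a nonzerodivisor and $\dim(R/xR)=d-1$. Moreover $R/xR$ is excellent, being a quotient of an excellent ring, and it is reduced by hypothesis, so it is analytically unramified, i.e. $\widehat{R/xR}$ is reduced; in particular $\dim(N(\widehat{R/xR}))<d-1$. This is exactly the condition under which the volume $e(\II(R/xR))$ is defined (\cite[Theorem 1.1]{C2}) and the Volume~$=$~Multiplicity formula \cite[Theorem 1.1]{Cvol} applies to the graded family $\II(R/xR)$ of $\fm_{R/xR}$-primary ideals on the $(d-1)$-dimensional ring $R/xR$, giving
$$
e(\II(R/xR))=\lim_{n\to\infty}\frac{e(I_n(R/xR))}{n^{d-1}}.
$$

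For the expansion, I would fix $n$ and compute the degree function $e(I_n(R/xR))$. Since $R$ is an excellent normal local domain it is analytically unramified, $I_n$ is $\fm_R$-primary, and $Y_n\to\Spec(R)$ is proper birational with $Y_n$ normal and $I_n\mathcal{O}_{Y_n}$ invertible, so all hypotheses of \autoref{Bupval} are met. Applying it at the element $x$ gives
$$
e(I_n(R/xR))=\sum_{v\in\Div(\fm_R)}v(x)\,((I_n\mathcal{O}_{Y_n})^{d-1}\cdot C(Y_n,v))_R,
$$
and by the last clause of \autoref{Bupval} only the finitely many $v$ that are Rees valuations of $I_n$ contribute, so this is a genuine finite sum for each fixed $n$. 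Dividing by $n^{d-1}$ and substituting into the limit of the previous step yields the claimed identity verbatim. No interchange of limit and summation is required here, precisely because the summation is finite for each $n$; this is also why the statement keeps the sum inside the limit and leaves the interchange to \autoref{CorSwitch}.

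The only substantive point is the applicability of the Volume~$=$~Multiplicity formula to $R/xR$: one needs $\dim(N(\widehat{R/xR}))<d-1$, and this is exactly where the hypotheses ``$R$ excellent'' and ``$R/xR$ reduced'' are genuinely used, since excellence is what forces reducedness to pass to the completion. Everything else is a direct matching of hypotheses to \autoref{Bupval} and to the cited Volume~$=$~Multiplicity results.
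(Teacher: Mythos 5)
Your proposal is correct and follows essentially the same route as the paper: reduce $e(\II(R/xR))$ to $\lim_n e(I_n(R/xR))/n^{d-1}$ via a Volume~$=$~Multiplicity statement applied to the analytically unramified ring $R/xR$ (the paper cites \cite[Theorem 6.5]{C2} where you cite \cite[Theorem 1.1]{Cvol}, but the content is the same), and then expand each $e(I_n(R/xR))$ by \autoref{Bupval}. Your added remarks --- that excellence plus reducedness of $R/xR$ is exactly what makes the completion reduced, and that each inner sum is finite because only the Rees valuations of $I_n$ contribute --- are correct fillings-in of details the paper leaves implicit.
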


\begin{proof} $R/xR$ is excellent and reduced, so $R/xR$ is analytically unramified by \cite[Scholie IV.7.8.3]{EGAIV}. Thus
$$
e(\II(R/xR))=\lim_{n\rightarrow\infty}\frac{e(I_n(R/xR))}{n^{d-1}}
=
\lim_{n\rightarrow \infty}\left(\sum_{v\in {\rm Div}(\m_R)}v(x)\frac{((I_n\mathcal O_{Y_n})^{d-1}\cdot C(Y_n,v))_R}{n^{d-1}}\right)
$$
by \cite[Theorem 6.5]{C2} and   \autoref{Bupval}.
	\end{proof}

\begin{remark}\label{rem:rad_princ_ideal} 
If $R$ is as in \autoref{Theorem4}, by \cite[Lemma 3.14]{Artin}  there always exists $0\neq x\in R$ such that $R/xR$ is reduced
  (see also
\cite[tag 0BWZ, Proposition 15.125.10]{Stack}).
\end{remark}

 \begin{remark} An excellent domain $R$ is formally equidimensional by \cite[Scholie IV.7.8.3(x)]{EGAIV}. Thus all Rees valuations of an $\fm_R$-primary ideal $I$ of $R$ are $\fm_R$-valuations by \cite[Remark 4.6]{CM}.
\end{remark}

\begin{corollary}\label{CorSwitch} 
Under the notations in \autoref{Theorem4}, further assume that the union of the sets of Rees valuations  of all the ideals $I_n$ is finite and that the limit 
	$$
	\lim_{n\rightarrow\infty}\frac{((I_n\mathcal O_{Y_n})^{d-1}\cdot C(Y_n,v))_R}{n^{d-1}}
	$$
	exists for all  $v\in \Div(\m_R)$. Then
	$$
	e(\II(R/xR))=\sum_{v\in \Div(\m_R)}v(x)\left(\lim_{n\rightarrow\infty}\frac{((I_n\mathcal O_{Y_n})^{d-1}\cdot C(Y_n,v))_R}{n^{d-1}}\right).
	$$
That is, the limit and sum in  \autoref{Theorem4} commute.
\end{corollary}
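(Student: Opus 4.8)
The plan is to start from the identity already furnished by \autoref{Theorem4},
\[
e(\II(R/xR))=\lim_{n\rightarrow \infty}\left(\sum_{v\in \Div(\m_R)}v(x)\,\frac{((I_n\mathcal O_{Y_n})^{d-1}\cdot C(Y_n,v))_R}{n^{d-1}}\right),
\]
and to show that under the two extra hypotheses the limit may be pulled inside the sum. Interchanging a limit with an \emph{a priori} infinite sum is not automatic, so the entire argument reduces to producing a single finite index set that supports every inner sum uniformly in $n$.

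First I would pin down the support of each inner sum. Fix $n$. By \autoref{Bupval} applied with $W=Y_n$, the summand $((I_n\mathcal O_{Y_n})^{d-1}\cdot C(Y_n,v))_R$ is strictly positive exactly when $v$ is a Rees valuation of $I_n$. Moreover it is \emph{zero} otherwise: the reduction to $Y=\Proj(\bigoplus_{m\gs 0}\overline{I_n^{\,m}})$ in the proof of \autoref{Bupval} (via the factorization $\lambda\colon Y_n\to Y$, the projection formula, and \eqref{eqG}) expresses the summand as $0$ or as $\mu$ times $(\mathcal L_Y^{d-1}\cdot C(Y,v))_R$, and by \autoref{Rval} together with the ampleness of $\mathcal L_Y$ recalled in \autoref{SubSecDeg}, the latter vanishes when $\dim C(Y,v)<d-1$, i.e.\ when $v$ is not a Rees valuation. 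Hence, writing $S:=\bigcup_{n\gs 1}\{\text{Rees valuations of }I_n\}$, the inner sum is supported on the (Rees valuations of $I_n$ inside the) set $S$, so that
\[
\sum_{v\in \Div(\m_R)}v(x)\,\frac{((I_n\mathcal O_{Y_n})^{d-1}\cdot C(Y_n,v))_R}{n^{d-1}}
=\sum_{v\in S}v(x)\,\frac{((I_n\mathcal O_{Y_n})^{d-1}\cdot C(Y_n,v))_R}{n^{d-1}}
\]
for every $n$, the extra summands over $v\in S$ that are not Rees valuations of $I_n$ being identically $0$.

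With the index set now the fixed \emph{finite} set $S$ (by the first hypothesis) and with $x\ne 0$ in the domain $R$ so that each weight $v(x)$ is a fixed nonnegative integer, the interchange becomes the elementary fact that a limit commutes with a finite sum: since $\lim_{n\rightarrow\infty}\frac{((I_n\mathcal O_{Y_n})^{d-1}\cdot C(Y_n,v))_R}{n^{d-1}}$ exists for each of the finitely many $v\in S$ by the second hypothesis, linearity of limits yields
\[
e(\II(R/xR))=\sum_{v\in S}v(x)\left(\lim_{n\rightarrow\infty}\frac{((I_n\mathcal O_{Y_n})^{d-1}\cdot C(Y_n,v))_R}{n^{d-1}}\right).
\]
Finally I would re-enlarge the index set back to all of $\Div(\m_R)$: for $v\notin S$ the valuation $v$ is a Rees valuation of no $I_n$, so each term vanishes identically in $n$ and its limit is $0$, giving the stated formula.

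The only genuine content lies in the support reduction of the middle step; once the index set is finite and independent of $n$, the interchange is purely formal. The delicate point there is that one needs the summands attached to non-Rees valuations to be \emph{exactly} $0$ rather than merely non-positive, which is why I would invoke the nonnegativity of the ample intersection products (equivalently, the dimension-count vanishing) and not merely the positivity criterion of \autoref{Bupval}. That the finiteness hypothesis on $S$ cannot be dropped is confirmed by the filtration constructed in \autoref{ExS0}, where the sum and limit genuinely fail to commute.
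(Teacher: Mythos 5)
Your proposal is correct and follows essentially the same route as the paper: reduce the sum to the finite set of all Rees valuations of the $I_n$ by using \autoref{Bupval} to see that the summands vanish identically for every other $v\in\Div(\m_R)$, and then commute the limit with the resulting finite sum. Your extra care in checking that the non-Rees terms are exactly zero (via the nonnegativity coming from \eqref{eqG} and ampleness) is a worthwhile explicit justification of what the paper leaves implicit in its citation of \autoref{Bupval}, but it is the same argument.
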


\begin{proof} 	Let $\{v_1,\ldots,v_r\}$ be the set of all the Rees valuations of the ideals  $I_n$. Suppose that $v\in\Div(\fm_R)$ is not equivalent to any of the $v_i$. Then $((I_n\mathcal O_{Y_n})^{d-1}\cdot C(Y_n,v))_R=0$ for all $n$ by \autoref{Bupval}, and so, by \autoref{Theorem4}, 
\begin{align*}
		e(\II(R/xR))
        =\sum_{i=1}^rv(x)\left(\lim_{n\rightarrow\infty}\frac{((I_n\mathcal O_{Y_n})^{d-1}\cdot C(Y_n,v))_R}{n^{d-1}}\right).
\end{align*}
\end{proof}

\begin{remark}\label{rem:degree_func_filtration}
 The previous result \autoref{CorSwitch} extends the degree functions in \cite{Re, ReSh, Re2} to graded families of $\fm_R$-primary ideals under a suitable assumption. In this extension, we can think of the limits $d(\II,v):=\lim_{n\rightarrow\infty}\frac{((I_n\mathcal O_{Y_n})^{d-1}\cdot C(Y_n,v))_R}{n^{d-1}}$ as corresponding to  coefficients of $v(x)$ in \cite[\S 3]{ReSh}. In \autoref{TheoremLim} we show these limits exist for divisorial filtrations in dimension 2.
\end{remark}

\section{Degree functions in 2-dimensional normal local rings}\label{Sec2dim}

In this section, we are concerned with resolutions of singularities of two dimensional excellent  local domains. The  existence of  resolution of singularities for two-dimensional reduced excellent schemes is proven in  \cite{L2} or \cite{CJS}.
For simplicity of notation, we   write $(-)$ for the intersection product $(-)_R$. In the setting of this section, the intersection product of divisors is classical; it is in fact the intersection product developed in \cite{L}, as is shown before the statement of \cite[Theorem 5.5]{CM}.

In the following example we show that in general the limit and sum in \autoref{Theorem4} do not commute. Recall that \autoref{CorSwitch} gives a sufficient condition  for the limit and sum to commute. 

\begin{example}\label{ExS0}  There exists a 
2-dimensional regular local ring $(R,\fm_R,\kk)$, which is the localization at a maximal ideal of a polynomial ring in two variables over an algebraically closed field,  and a  
filtration $\II=\{I_n\}_{n\gs 0}$ of $\m_R$-primary   such that:
	\begin{enumerate}
		\item[(1)] There exists an infinite sequence of inequivalent $\fm_R$-valuations $\{v_i\}_{i\gs 0}$ such that  $v_0,v_1,\ldots,v_n$ are the Rees valuations of $I_n$ for all $n$.
		\item[(2)] Let $R[\II]=\oplus_{n\gs 0}\,I_n$ be the  Rees algebra of $\II$. Then 
		$$
		\sqrt{\m_RR[\II]}=\m_R\oplus R[\II]
		_{>0},
		$$
		so that the {\it analytic spread} of $\II$ is zero, i.e.,  $\ell(\II)=\dim \left(R[\II]/\m_RR[\II]\right)=0$.
		\item[(3)] For any $0\neq x\in R$ with  $R/xR$  reduced,  the limit and sum in \autoref{Theorem4} do not commute.
	\end{enumerate}
\end{example}

\begin{proof}  Let $\kk$ be an algebraically closed field and  $R=\kk[a,b]_{(a,b)}$ be the localization of the polynomial ring $\kk[a,b]$ at the maximal ideal $(a,b)$. 
Let $\pi:X\rightarrow \Spec(R)$ be the blowup of $\m_R=(a,b)$ with exceptional divisor $E$. Let $\{q_i\}_{i\in \ZZ_{>0}}$ be an infinite set of distinct points on $E$. 

For $n\gs 1$, let $\pi_n:Y_n\rightarrow X$ be the blowup of $q_1,\ldots,q_n$, and for $m\gs n$ let $\pi_{m,n}:Y_m\rightarrow Y_n$ be the natural morphism such that $\pi_m= \pi_n\circ \pi_{m,n}$. 
	Let $E_0(n)$ be the strict transform of $E$ on $Y_n$,  and for $1\ls i\ls n$ let $E_i(n)$ be the exceptional curves of $\pi_n$ that contract to $q_i$. 
    To simplify notation, we  write $E_i$ for $E_i(n)$ when the context makes it clear which scheme $Y_n$ the curves are on.  Let $D_n$ be the divisor on $Y_n$ defined by 
	\begin{equation}\label{Ex4}
		D_n=\pi_n^*((2n+1)E)+E_1+\cdots+E_n=(2n+1)E_0+(2n+2)E_1+\cdots+(2n+2)E_n,
	\end{equation}
    where the second equality uses $\pi_n^*(E)=E_0+E_1+\cdots+E_n$.


	We have the following intersection numbers
    \begin{align}\label{EXIN}
	&(E_0^2)=-(n+1),\qquad
    (E_i^2)=-1\mbox{ for }i\gs 1,\\
    \nonumber &(E_i\cdot E_j)=0\mbox{ for $j>i\gs 1$,}\qquad \text{and}\qquad (E_i\cdot E_0)=1
	\mbox{ for }i\gs 1. 
	\end{align}
  
 We prove the formulas in \autoref{EXIN} by induction on $n$. By \autoref{TheoremCMAC}(1), 
$(E^2)=-e(\m_R)=-1$, since $R$ is a regular local ring. Thus \autoref{EXIN} is true for $n=0$ (taking $Y_0=X$). Suppose \autoref{EXIN} is true for some $n\gs 0$, we  prove that it is true for $n+1$. First, we have that $(E_{n+1}(n+1)^2)=e(\m_{q_{n+1}})=-1$ by \autoref{TheoremCMAC}(1), since $\mathcal O_{Y_{n+1},q_{n+1}}$ is a regular local ring with maximal ideal $\m_{q_{n+1}}$. 
Moreover, for $1\ls i\ls n$, $(E_i(n+1)\cdot E_{n+1}(n+1))=0$ since 
$$
E_i(n+1)\cdot E_{n+1}(n+1)\cdot [Y_{n+1}]=E_i(n+1)\cdot [E_{n+1}(n+1)]=0
$$
in $A_{-2}(Y_{n+1}/\Spec(R))$; the latter equality holds since $\mathcal O_{Y_{n+1}}(E_i(n+1))\otimes\mathcal O_{E_{n+1}(n+1)}\cong \mathcal O_{E_{n+1}(n+1)}$, as the supports of $E_i(n+1)$ and $E_{n+1}(n+1)$ are disjoint.

By a local calculation above the point $q_{n+1}$ blown up by the morphism $\pi_{n+1,n}:Y_{n+1}\rightarrow Y_n$, we see that $E_0(n+1)$ and $E_{n+1}(n+1)$ intersect in a single point $w$ 
 and that  local equations for $E_0(n+1)$ and $E_{n+1}(n+1)$ generate the maximal ideal $\fm_{w}$ in $\mathcal O_{Y_{n+1},w}$. Now 
$(E_0(n+1)\cdot E_{n+1}(n+1))=1$ since
$$
E_0(n+1)\cdot E_{n+1}(n+1)\cdot [Y_{n+1}]=E_0(n+1)\cdot [E_{n+1}(n+1)]=[w]
$$
in $A_{-2}(Y_{n+1}/\Spec(R))$ so 
$$
(E_0(n+1)\cdot E_0(n+1))=\int_{\kk}[w]=1.
$$

We note that 
 $\pi_{n+1,n}^*(E_0(n))=E_0(n+1)+E_{n+1}(n+1)$ and that
for $1\ls i\ls n$ the map $\pi_{n+1,n}$ is an isomorphism in a neighborhood of $E_i(n)$.  Therefore, by 
\cite[Proposition 2.8]{CM}
 and induction, we have 
$$
(E_0(n+1)\cdot E_i(n+1))
= (\pi_{n+1,n}^*(E_0(n))\cdot \pi_{n+1,n}^*(E_i(n)))
=(E_0(n)\cdot E_i(n))=1 \text{ for } 1\ls i\ls n,
$$
$$
(E_i(n+1)\cdot E_j(n+1))
= (\pi_{n+1,n}^*(E_i(n))\cdot \pi_{n+1,n}^*(E_j(n)))
=(E_i(n)\cdot E_j(n))=0 \text{ for } 1\ls i<j\ls n,
$$
$$
\text{and }(E_i(n+1)^2)
= (\pi_{n+1,n}^*(E_i(n))^2)
=(E_i(n)^2)=-1 \text{ for } 1\ls i\ls n.
$$
Likewise, 
$$
\big(\pi_{n+1,n}^*(E_0(n))\cdot \pi_{n+1,n}^*(E_0(n))\big)=(E_0(n)\cdot E_0(n))=-(n+1).
$$
On the other hand,
\begin{align*}
(\pi_{n+1,n}^*(E_0(n))\cdot \pi_{n+1,n}^*(E_0(n)))&=((E_0(n+1)+E_{n+1}(n+1))\cdot (E_0(n+1)+E_{n+1}(n+1)))\\
&=(E_0(n+1)^2)+2-1.
\end{align*}
Thus $(E_0(n+1)^2)=-(n+2)$.

	From  \autoref{Ex4} and \autoref{EXIN} we obtain $(-D_n\cdot E_0)=n+1$ and $(-D_n\cdot E_i)=1$ for $i\gs 1$. Thus 
    \cite[Theorem 12.1]{L} implies that $\mathcal O_{Y_n}(-D_n)$ is generated by global sections over $\Spec(R)$ and it  is very ample over $\Spec(R)$.  
	Let 
	$$
	I_n=\Gamma(Y_n,\mathcal O_{Y_n}(-D_n)).
	$$
	It follows that $I_n\mathcal O_{Y_n}=\mathcal O_{Y_n}(-D_n)$ and $Y_n$ is the blowup of $I_n$. 
	Let $v_0$ be the canonical valuation of $\mathcal O_{X,E}$, and $v_i$ be the canonical valuation of $\mathcal O_{Y_i,E_i}$ for $i\gs 1$. Then, by \autoref{Ex4}, 
	$$
	I_n=I(v_0)_{2n+1}\cap I(v_1)_{2n+2}\cap\cdots\cap I(v_n)_{2n+2}
    \quad
    \text{for}
    \quad
    n\gs 1.
	$$
    We conclude that $\II=\{I_n\}_{n\gs 0}$   is a  filtration of $R$. The ideal 
    $I_n$ is integrally closed, since it is an intersection of valuation ideals \cite[Lemma 4.6]{BDHM}. Moreover, since $R$ is a 2-dimensional regular local ring,  $I_n^m$ is an integrally closed ideal for all $m\gs 1$ by  \cite[Theorem 2',  p.385]{ZS2} or  \cite[Lemma 7.3]{L}. Thus $R[I_nt]$ is integrally closed in $R[t]$ \cite[Proposition 5.2.1]{HS}, and so the blowup of $I_n$, $Z_n=\Proj(R[I_nt])$, is a normal scheme. 
    By   \cite[Proposition II.7.14]{H}, we have a natural birational morphism $\phi:Y_n\rightarrow Z_n$ which is proper by  \cite[Corollary II.4.8]{H}. Moreover, $\phi$ is  quasi-finite since $I_n\mathcal O_{Y_n}=\phi^*\left(\mathcal O_{Z_n}(1)\right)$ is very ample, so that no curves of $Y_n$ are contracted to points on $Z_n$. 
    Thus $\phi$ is an isomorphism by Zariski's Main Theorem. In fact, $\phi$ is finite by \cite[Corollary 1.10]{Mil} and since $\phi$ is birational and $Z_n$ is normal, $\phi$ is an isomorphism.
Thus $Y_n$ is the blowup of $I_n$. Since $Y_n$ is normal and $I_n\mathcal O_{Y_n}
=\mathcal O_{Y_n}(-D_n)$, it follows from \autoref{Ex4} 
    that  $v_0,v_1,\ldots,v_n$ are the Rees valuations of $I_n$ for all $n\gs 1$.  This finishes the proof of part (1).
	
	We continue with the proof of part (2). By  \cite[Lemma 7.3]{L}, for  every $n,m\gs 0$ with $m\gs 3$, the image of the natural map from
	$$
	\m_R^{m-2}\otimes_RI_{mn}=\Gamma\left(Y_{mn},\mathcal O_{Y_{mn}}\big(-\pi_{mn}^*((m-2)E\big)\right)\otimes_R
	\Gamma(Y_{mn},\mathcal O_{Y_{mn}}(-D_{mn}))
	$$
	into the ideal $J=\Gamma(Y_{mn},\mathcal O_{Y_{mn}}(-\pi_{mn}^*((n-2)E)-D_{mn}))$ of $R$ is surjective.
	Thus 
		$J=\m_R^{m-2}I_{mn}$. 
	Moreover, for $m\gs 3$
	\begin{align*}
		I_n^m= \Gamma(Y_n,\mathcal O_{Y_n}(-D_n))^m
		&=\Gamma\left(Y_{mn},\mathcal O_{Y_{mn}}\big(-\pi_{mn,m}^*(D_n)\big)\right)^m\\
		&= \Gamma\left(Y_m,\mathcal O_{Y_{mn}}(-\pi_{mn}^*((2n+1)E)-E_1-\cdots-E_n)\right)^m\\
		&\subset\Gamma\left(Y_{mn},\mathcal O_{Y_{mn}}\big(-\pi_{mn}^*(m(2n+1)E)-mE_1\cdots-mE_n)\big)\right)\\
		&\subset\Gamma\left(Y_{mn},\mathcal O_{Y_{mn}}(-\pi_{mn}^*((m(2n+1)-1)E)-E_1-E_2-\cdots-E_{mn})\right)\\
		&=\Gamma\left(Y_{mn},\mathcal O_{Y_{mn}}(-\pi_{mn}^*((m-2)E)-D_{mn})\right)\\
        &=J=\m_R^{m-2}I_{mn}.
	\end{align*}
    Therefore
	$
	\sqrt{\m_RR[\II]}=\m_R\oplus R[\II]_{>0}
	$, and so
	 $\ell(\II)=0$, finishing the proof of part (2). 

	Finally, we prove part (3). 
  The $Y_n$ satisfy the assumptions of  \autoref{Theorem4} since $I_n\mathcal O_{Y_n}$ is invertible and $Y_n$ is normal for all $n$. 
     For every $v\in \Div(\m_R)\setminus \{v_0,v_1,\ldots,v_n\}$  the center $C(Y_n,v)$ is a point on $Y_n$ (of dimension 0). Furthermore 
	$$
	((I_n\mathcal O_{Y_n})\cdot C(Y_n,v_i))=\left(-D_n\cdot E_i(n)\right)=
    \begin{cases}
n+1&\text{ if }i=0,\\
1& \mbox{ if }1\ls i\ls n.
\end{cases}
   $$
In conclusion, for $v\in \Div(\m_R)$ we have
	\begin{equation}\label{eqNonlim2}
	((I_n\mathcal O_{Y_n}\cdot C(Y_n,v))=
    \begin{cases}
    n+1&\mbox{ if }v=v_0\\
		1&\mbox{ if }v=v_i\mbox{ and } 1\ls i\ls n\\
		0&\mbox{ otherwise.}
    \end{cases}
	\end{equation}
	Thus
	\begin{equation}\label{Ex5}
		\lim_{n\rightarrow\infty}\frac{((I_n\mathcal O_{Y_n})\cdot C(Y_n,v))}{n}=
        \begin{cases}
        1&\mbox{ if }v=v_0\\
			0&\mbox{ otherwise.}
        \end{cases}
	\end{equation}
	In particular, this limit always exists. From  \autoref{eqNonlim2} it follows that 
$$
\sum_{v\in {\rm Div}(\m_R)}v(x)\frac{((I_n\mathcal O_{Y_n})^{d-1}\cdot C(Y_n,v))}{n}=
v_0(x)+\frac{v_0(x)+v_1(x)+\cdots+v_n(x)}{n}.
$$
  Therefore, by \autoref{Theorem4}
	\begin{equation}\label{Ex3}
		e(\II(R/xR))
        =\lim_{n\rightarrow\infty}\left(v_0(x)+\frac{v_0(x)+v_1(x)+\cdots+v_n(x)}{n}\right).
	\end{equation}
 Let $C=\Div(x)$ be the divisor of $x$ on $\Spec(R)$, which  is a reduced curve on $\Spec(R)$.
Let  $\overline C$ be the strict transform of $C$ on $X$. Then $\overline{C}$ intersects $E$ in at most finitely many points, so there are at most finitely many of the points  $q_{i_1},\ldots,q_{i_r}$, indexed as  $i_1<\cdots<i_r$ in $\overline{C}$.  
	 Let $C'=C'(n)$ be the strict transform of $C$ on $Y_n$. 
 We have that $\pi^*(C)=\overline C+v_0(x)E$ and 
$$
(\pi\circ\pi_n)^*(C)=C'(n)+v_0(x)E_0(n)+\cdots+v_n(x)E_n(n)
$$
for all $n\gs 1$. Let $g=0$ be a local equation of $\overline C$ at $q_n$. Let $z,y$ be regular parameters in the regular local ring $\mathcal O_{X,q_n}$ such that $z=0$ is a local equation of $E$ at $q_n$. Then $x=gz^{v_0(x)}u$ where $u$ is a unit in $\mathcal O_{X,q_n}$. The valuation $v_n$ is the $\m$-adic valuation of $\mathcal O_{X,q_n}$,  then
$v_n(x)=v_n(g)+v_0(x)$. 
Since $q_n\in \overline C$ if and only if $n\in \{i_1,\ldots,i_r\}$, it follows that
$v_n(x)=v_0(x)$ if $n\in \{1,\ldots,n\}\setminus \{i_1,\ldots i_r\}$. 
Thus, by \autoref{Ex3}
$$e(\II(R/xR))=\lim_{n\rightarrow\infty}\left(v_0(x)+\frac{\sum_{j=1}^rv_{i_j}(x)+(n-r)v_0(x)}{n}\right)=2v_0(x).
	$$
	However, using \autoref{Ex5} we obtain 
	$$
	\sum_{v\in \Div(n_R)}v(x)\lim_{n\rightarrow\infty}\left(\frac{((I_n\mathcal O_{Y_n})\cdot C(Y_n,v))}{n}\right)=v_0(x).
    $$
\end{proof}

 \autoref{Theorem4} and \autoref{CorSwitch} raise  \autoref{quest:limit_exists} stated in the introduction. That is, 
under the notations in \autoref{Theorem4}, when does the limit
 $$
\lim_{n\rightarrow\infty}\frac{((I_n\mathcal O_{Y_n})^{d-1}\cdot C(Y_n,v))}{n^{d-1}}
$$
exists for $v\in \Div(\m_R)$? 
 Our next theorem shows that if $R$ 
 is a normal and excellent 
 $2$-dimensional local ring and $\II$ is a $\QQ$-divisorial filtration, then the limit in \autoref{quest:limit_exists} not only exists, but is given by a simple formula. For the statements and proof of the theorem we need some prior notation.

Let assumptions be as in the previous paragraph. 
There exists a birational projective morphism $\pi:X\rightarrow \Spec(R)$ such that $X$ is nonsingular and $\fm_R\mathcal O_X=\mathcal O_X(-D)$ where $D$ is a simple normal crossings 
divisor on $X$, so that all integral components of  $\pi^{-1}(\m_R)$ are nonsingular projective curves over $R/\m_R$. Moreover, there exists an effective  $\QQ$-divisor $\Delta$ on $X$ with exceptional support,  such that $\Gamma(X,\mathcal O_X(-\lceil n\Delta\rceil))=I_n$ for $n\gs 0$ and $-\Delta$ is a nef divisor on $X$. The existence of $X$ and $\Delta$ with these properties is shown in \cite{CNag}.

\begin{theorem}\label{TheoremLim} Let $(R,\fm_R,\kk)$ 
 be a normal and excellent 
 $2$-dimensional local ring and $\II=\{I_n\}_{n\gs 0}$ be a $\QQ$-divisorial filtration on $R$. 
Following the notation in the previous paragraph, let  $Y_n\rightarrow \Spec(R)$ be  projective birational morphisms such that $Y_n$ is normal and $I_n\mathcal O_{Y_n}$ is invertible. Then  
for every  $v\in \Div(\m_R)$ we have 
	$$
	\lim_{n\rightarrow\infty}\frac{(I_n\mathcal O_{Y_n}\cdot C(Y_n,v))}{n}=(-\Delta\cdot C(X,v)),
	$$
where $(\Delta\cdot C(X,v))$ is defined to be $\frac{1}{r}(r\Delta\cdot C(X,v))$ with $r\in \ZZ_{>0}$ and  $r\Delta$ an integral divisor.     
	In particular, this limit exists.
\end{theorem}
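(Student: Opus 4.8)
The plan is to fix $v\in\Div(\m_R)$ and transport the computation off the varying models $Y_n$ onto a single fixed model extracting $v$. Choose a normal model $\psi\colon X_v\to X$ on which $v$ is represented by a prime divisor $E_v$ (so $C(X_v,v)=E_v$); for each $n$ pick a normal model $W_n$ dominating both $X_v$ and $Y_n$, with morphisms $g_n\colon W_n\to X$ and $h_n\colon W_n\to Y_n$, and let $E_v^{(n)}$ be the center of $v$ on $W_n$. Writing $I_n\mathcal O_{W_n}=\mathcal O_{W_n}(-F_n)$, the projection formula \cite[Proposition 2.6]{CM} applied to $h_n$ gives $(I_n\mathcal O_{Y_n}\cdot C(Y_n,v))=(-F_n\cdot E_v^{(n)})$ (both sides being $0$ when $C(Y_n,v)$ is a point), while applied to $\psi$ it gives $(-\Delta\cdot C(X,v))=(-\psi^*\Delta\cdot E_v)$, regardless of whether $C(X,v)$ is a curve or a point. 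Thus the theorem reduces to proving $\tfrac1n(-F_n\cdot E_v^{(n)})\to(-\psi^*\Delta\cdot E_v)$.

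Next I would identify this degree with the size of the restricted linear system. Since $\mathcal O_{W_n}(-F_n)=I_n\mathcal O_{W_n}$ is globally generated by $I_n$, its restriction to the curve $E_v^{(n)}$ is base-point free, so its degree $(-F_n\cdot E_v^{(n)})$ is at least $w_n-1$, where $w_n$ is the $\kk$-dimension of the image of $I_n$ in $H^0\big(E_v,\psi^*\mathcal O_X(-\lceil n\Delta\rceil)|_{E_v}\big)$ (using that $E_v^{(n)}\to E_v$ is birational). Conversely, by the Riemann--Roch theorem on the nonsingular projective curve $E_v$ (\autoref{SecRR}), once $\deg\big(\psi^*\mathcal O_X(-\lceil n\Delta\rceil)|_{E_v}\big)=(-\psi^*\lceil n\Delta\rceil\cdot E_v)=n(-\psi^*\Delta\cdot E_v)+O(1)$ exceeds $2p_a(E_v)-2$, the term $H^1$ on $E_v$ vanishes and $w_n\le h^0\big(E_v,\psi^*\mathcal O_X(-\lceil n\Delta\rceil)|_{E_v}\big)=(-\psi^*\lceil n\Delta\rceil\cdot E_v)+O(1)$. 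Together these give $(-F_n\cdot E_v^{(n)})=w_n+O(1)$ and $w_n\le n(-\psi^*\Delta\cdot E_v)+O(1)$, so the upper bound $\limsup_n\tfrac1n(-F_n\cdot E_v^{(n)})\le(-\psi^*\Delta\cdot E_v)$ is immediate, and the whole statement reduces to $\liminf_n w_n/n\ge(-\psi^*\Delta\cdot E_v)$. When $C(X,v)$ is a point, $E_v$ is contracted by $\psi$, this degree is $O(1)$, forcing $w_n=O(1)$ and hence limit $0=(-\Delta\cdot C(X,v))$; so I may assume $E:=C(X,v)$ is a curve with $X_v=X$, $E_v=E$ and $(-\Delta\cdot E)>0$.

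For the remaining lower bound I would pass to the cohomology of $X$. The short exact sequence $0\to\mathcal O_X(-\lceil n\Delta\rceil-E)\to\mathcal O_X(-\lceil n\Delta\rceil)\to\mathcal O_X(-\lceil n\Delta\rceil)|_E\to0$ shows that the image of $I_n=H^0\big(X,\mathcal O_X(-\lceil n\Delta\rceil)\big)$ in $H^0\big(E,\mathcal O_X(-\lceil n\Delta\rceil)|_E\big)$ is the kernel of the connecting map into $H^1\big(X,\mathcal O_X(-\lceil n\Delta\rceil-E)\big)$, whence $w_n\ge h^0\big(E,\mathcal O_X(-\lceil n\Delta\rceil)|_E\big)-h^1\big(X,\mathcal O_X(-\lceil n\Delta\rceil-E)\big)=n(-\Delta\cdot E)+O(1)-h^1\big(X,\mathcal O_X(-\lceil n\Delta\rceil-E)\big)$, using Riemann--Roch on $E$ for the first term. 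Thus the lower bound, and with it the entire theorem, follows once one shows the asymptotic vanishing $h^1\big(X,\mathcal O_X(-\lceil n\Delta\rceil-E)\big)=o(n)$.

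This cohomological estimate is the step I expect to be the main obstacle, because $-\Delta$ is nef but \emph{not} big: its self-intersection, being supported on the negative-definite exceptional locus, is negative, so Kawamata--Viehweg-type vanishing does not apply directly. I would establish it from the nef-ness of $-\Delta$ by peeling off the reduced exceptional curves one at a time: on each component $E_i$ the sheaf $\mathcal O_X(-\lceil n\Delta\rceil-E)|_{E_i}$ has degree $n(-\Delta\cdot E_i)+O(1)$, which tends to infinity and kills $H^1$ on every component with $(-\Delta\cdot E_i)>0$, leaving only the bounded contribution of the finitely many curves in the null locus $\{\,(-\Delta\cdot E_i)=0\,\}$; controlling the resulting system of restriction maps shows the relevant $H^1$ grows sublinearly (and I expect it to stay bounded). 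This is exactly the asymptotic cohomology of nef divisors on resolutions of excellent surfaces developed in \cite{C3}, which I would invoke. Granting $h^1=o(n)$, the squeeze of the previous paragraph yields $\lim_n w_n/n=(-\Delta\cdot E)$, hence $\lim_n\tfrac1n(-F_n\cdot E_v^{(n)})=(-\Delta\cdot E)=(-\Delta\cdot C(X,v))$; finally $(\Delta\cdot C(X,v))=\tfrac1r(r\Delta\cdot C(X,v))$ is independent of the chosen $r$ by linearity of the intersection product, which identifies the limit with the asserted value and shows it exists.
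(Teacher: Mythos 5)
Your overall architecture --- transporting the computation to a fixed model by the projection formula, comparing the intersection number with the restricted linear system on the center curve, and using Riemann--Roch together with a uniform $H^1$ bound --- is the same as the paper's. Your lower-bound half is essentially sound and is in fact a cleaner route than the paper's \autoref{PropLim2}: the inequality $\deg(I_n\mathcal O_{W_n}\otimes\mathcal O_{E_v^{(n)}})\gs w_n-h^0(\mathcal O_{E_v^{(n)}})$ together with $w_n\gs h^0(E,\mathcal O_X(-\lceil n\Delta\rceil)\otimes\mathcal O_E)-h^1(X,\mathcal O_X(-\lceil n\Delta\rceil-E))$ reduces everything to the uniform bound on that $h^1$, which is exactly \cite[Corollary 5.7]{CNag} (the relevant reference is \cite{CNag}, not \cite{C3}), and this avoids the paper's explicit bookkeeping of the base points $a_i(n)p_i$.

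The genuine gap is in the $\limsup$ half. You write ``together these give $(-F_n\cdot E_v^{(n)})=w_n+O(1)$,'' but you have only proved $(-F_n\cdot E_v^{(n)})\gs w_n-O(1)$; the reverse inequality $(-F_n\cdot E_v^{(n)})\ls w_n+O(1)$ is the one your upper bound (and your point-center case) actually uses, and it does not follow from base-point-freeness: the degree of a globally generated line bundle on a curve is not bounded by the dimension of a generating space of sections (e.g.\ $\mathcal O_{\PP^1}(d)$ is generated by the two sections $x^d,y^d$). Concretely, writing $F_n=g_n^*\lceil n\Delta\rceil+B_n$ with $B_n\gs 0$, one gets $(-F_n\cdot E_v^{(n)})\ls(-\lceil n\Delta\rceil\cdot C(X,v))$ only when $E_v^{(n)}$ is not a component of $B_n$, i.e.\ when $v$ does not appear in the fixed divisor $G_n$ of $-\lceil n\Delta\rceil$. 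When it does appear (as happens for $v=v_{C^*}$ in \autoref{ExS1}, where the image of $I_n$ in $H^0(C^*,\mathcal O_Y(-n\Delta)\otimes\mathcal O_{C^*})$ is zero for all $n$), the correction term $-(G_n\cdot C(X,v))$ can be positive and must be controlled; this is exactly where the paper's \autoref{PropLim1} invokes the boundedness of the fixed components $G_n$ via \cite[Proposition 5.3]{CNag} --- a nontrivial input resting on the nefness of $-\Delta$ and the $\QQ$-divisorial hypothesis --- and then separates the residual base locus into a part supported over finitely many points, which meets the center nonnegatively. Since your proposal never addresses the fixed components, the $\limsup$ inequality, and with it the existence of the limit, is not established.
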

\begin{proof}
The result follows from  \autoref{PropLim1},  \autoref{CorLim},   and  \autoref{PropLim2} below.
\end{proof}

The following corollary is immediate from \autoref{TheoremLim} and  \autoref{CorSwitch}.

\begin{corollary}\label{dim2lim} Under the notations in \autoref{TheoremLim}, further assume that the union of the sets of Rees valuations of all the ideals $I_n$ is finite. For every $0\neq x\in  R$ with $R/xR$ is reduced we have
	$$
	e(\II(R/xR))=\sum_{v\in\Div(\m_R)}v(x)(-\Delta\cdot C(X,v)).
	$$
\end{corollary}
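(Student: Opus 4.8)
The plan is to assemble the statement directly from the two preceding results, \autoref{TheoremLim} and \autoref{CorSwitch}, specialized to dimension $d=2$. Since $d-1=1$ here, the limit appearing in the hypothesis of \autoref{CorSwitch}, namely
$$
\lim_{n\to\infty}\frac{((I_n\mathcal O_{Y_n})^{d-1}\cdot C(Y_n,v))_R}{n^{d-1}}=\lim_{n\to\infty}\frac{(I_n\mathcal O_{Y_n}\cdot C(Y_n,v))_R}{n},
$$
is precisely the limit whose existence and value are furnished by \autoref{TheoremLim}.

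First I would check that both standing hypotheses of \autoref{CorSwitch} are met. The finiteness of the union of the sets of Rees valuations of all the $I_n$ is assumed here by hypothesis. The remaining requirement, that the displayed limit exist for every $v\in\Div(\m_R)$, is supplied by \autoref{TheoremLim}, which asserts not only that this limit exists but that it equals $(-\Delta\cdot C(X,v))$. Thus both assumptions of \autoref{CorSwitch} hold in our situation.

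Applying \autoref{CorSwitch} then yields
$$
e(\II(R/xR))=\sum_{v\in\Div(\m_R)}v(x)\left(\lim_{n\to\infty}\frac{(I_n\mathcal O_{Y_n}\cdot C(Y_n,v))_R}{n}\right),
$$
and substituting the evaluation of each limit from \autoref{TheoremLim} gives the desired identity
$$
e(\II(R/xR))=\sum_{v\in\Div(\m_R)}v(x)(-\Delta\cdot C(X,v)).
$$
I do not expect a genuine obstacle at this stage: the substantive analytic content, the existence of the limits and their identification with intersection numbers against the $\QQ$-divisor $-\Delta$ on the fixed resolution $X$, has already been carried out in \autoref{TheoremLim}, while the interchange of the now finite sum with the limit is exactly the conclusion of \autoref{CorSwitch}. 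The only point meriting a word of care is that the sum over $\Div(\m_R)$ is effectively finite: by \autoref{Bupval} the term indexed by $v$ vanishes identically in $n$ whenever $v$ is not a Rees valuation of any $I_n$, and by hypothesis there are only finitely many such $v$.
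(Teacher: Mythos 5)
Your proposal is correct and is exactly the paper's argument: the paper states that the corollary is immediate from \autoref{TheoremLim} and \autoref{CorSwitch}, which is precisely the combination you carry out (using \autoref{TheoremLim} to supply the existence and value of the limits required by \autoref{CorSwitch}, and the hypothesis for the finiteness of the set of Rees valuations).
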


\begin{remark}\label{rem:dim2_assump}
The coefficients $(-\Delta\cdot C(X,v))$ are the analogues to the coefficients of $v(x)$  in \cite[\S 3]{ReSh}. We also note that  
there exist non-Noetherian divisorial filtrations of $\m_R$-primary ideals in  $2$-dimensional normal local rings satisfying the assumptions of  \autoref{dim2lim} as shown in  \autoref{ExS1}.  
There also exist (necessarily non-Noetherian) divisorial filtrations of $\m_R$-primary ideals $\II=\{I_n\}_{n\gs 0}$ in  $2$-dimensional normal local rings such that 
the union of the sets of Rees valuations of all the ideals $I_n$  is infinite, as shown in  \autoref{ExS2}. 
Finally, there exist filtrations  of $\m_R$-primary ideals in  $2$-dimensional regular local rings such that the limit and sum of  \autoref{Theorem4} do not compute as show in  \autoref{ExS0}.
\end{remark}

We continue with the notation introduced in the paragraph right before \autoref{TheoremLim}. 
Let $D\gs 0$ be an effective $\ZZ$-divisor on $X$. The {\it base locus} 
$$
\Bs(\Gamma(X,\mathcal O_X(-D))=\{p\in X\mid \Gamma(X,\mathcal O_X(-D))\mathcal O_{X,p}\ne \mathcal O_X(-D)_p\}
$$
is a closed subset of $X$ contained in $\pi^{-1}(\m_R)$.
We can write $-D=-M+F$ where $M$, $F$ are effective $\ZZ$-divisors such that
$\Gamma(X,\mathcal O_X(-D))=\Gamma(X,\mathcal O_X(-M))$ and the base locus $\Bs(\Gamma(X,\mathcal O_X(-M))$ is a finite set of closed points.   The divisor  $F$ is  the {\it fixed component} of $-D$. 
The {\it stable base locus} of $-D$ is defined as
\begin{equation}\label{eqN21}
\sBL(-D)=\bigcap_{n>0}\Bs(\Gamma(X,\mathcal O_X(-nD))).
\end{equation}
The stable base locus of a divisor on a projective variety is defined and its basic properties developed in \cite[Ch. 2]{Laz1}.    

The next proposition is the first step of the proof of \autoref{TheoremLim}.

\begin{proposition}\label{PropLim1}
Under the notations in \autoref{TheoremLim}, 
for every  $v\in \Div(\m_R)$ we have 
	\begin{equation*}\label{eqLim1}
		\limsup_{n\rightarrow\infty}\frac{(I_n\mathcal O_{Y_n}\cdot C(Y_n,v))}{n}\ls (-\Delta\cdot C(X,v)).
	\end{equation*}
\end{proposition}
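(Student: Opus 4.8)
\emph{Reduction.} The plan is to fix $v\in\Div(\m_R)$ and prove the bound for that single $v$. First observe that $(I_n\mathcal O_{Y_n}\cdot C(Y_n,v))$ does not depend on the chosen normal model $Y_n$ on which $I_n\mathcal O_{Y_n}$ is invertible: if $\rho\colon Y_n'\to Y_n$ dominates it, then $I_n\mathcal O_{Y_n'}=\rho^*(I_n\mathcal O_{Y_n})$, and the projection formula \cite[Proposition 2.6]{CM} together with $\rho_*C(Y_n',v)=C(Y_n,v)$ (or $0$) gives the same value, exactly as in the proof of \autoref{Bupval}. I am thus free to enlarge the $Y_n$. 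Choose, once and independently of $n$, a nonsingular model $\mu\colon X'\to X$, projective and birational, on which $v=v_{E'}$ for a single prime exceptional curve $E'$ on $X'$ (such a model exists by resolution of singularities, as $v$ is divisorial), and put $\Delta'=\mu^*\Delta$. Then $-\Delta'$ is again nef, $\Gamma(X',\mathcal O_{X'}(-\lceil n\Delta'\rceil))=I_n$ (pull-back of principal divisors and $\mu_*\mathcal O_{X'}=\mathcal O_X$ identify both global-section modules with $\{f:\divv_X(f)\ge n\Delta\}$), and by the projection formula $(-\Delta'\cdot E')=(-\Delta\cdot\mu_*E')=(-\Delta\cdot C(X,v))$ whether $\dim C(X,v)=1$ or $\dim C(X,v)=0$. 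Replacing $X,\Delta$ by $X',\Delta'$, I may therefore assume $C(X,v)=E$ is a prime exceptional curve with $(E\cdot E)$ a fixed negative constant, and I take each $Y_n$ to dominate $X$ via $f_n\colon Y_n\to X$, so that $C(Y_n,v)$ is the strict transform $\widetilde E_n$ of $E$, in particular always a curve. This one reduction simultaneously disposes of the case of a zero-dimensional center on the original $X$.

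\emph{The comparison on $Y_n$.} Write $I_n\mathcal O_{Y_n}=\mathcal O_{Y_n}(-D_n)$, and let $c_n\ge 0$ be the multiplicity of $E$ in the fixed part of the linear system $|\mathcal O_X(-\lceil n\Delta\rceil)|$, that is $c_n=v(I_n)-v(\lceil n\Delta\rceil)$, where $v(I_n)=\min\{v(f):0\ne f\in I_n\}$ and $v(\lceil n\Delta\rceil)$ is the coefficient of $E$ in $\lceil n\Delta\rceil$. By the very definition of $c_n$, every $f\in I_n$ satisfies $\divv_X(f)\ge\lceil n\Delta\rceil+c_nE$, so $I_n\mathcal O_{Y_n}\subseteq\mathcal O_{Y_n}\big(-f_n^*(\lceil n\Delta\rceil+c_nE)\big)$, and the effective difference $D_n-f_n^*(\lceil n\Delta\rceil+c_nE)$ has coefficient $0$ along $\widetilde E_n$. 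Intersecting with the integral curve $\widetilde E_n$, using that an effective divisor not containing $\widetilde E_n$ meets it non-negatively, together with $f_{n*}\widetilde E_n=E$ and the projection formula, I obtain
\[
(I_n\mathcal O_{Y_n}\cdot C(Y_n,v))=(-D_n\cdot\widetilde E_n)\ \le\ \big(-(\lceil n\Delta\rceil+c_nE)\cdot E\big)=(-\lceil n\Delta\rceil\cdot E)-c_n(E\cdot E).
\]

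\emph{Passing to the limit.} Dividing by $n$ and letting $n\to\infty$, the first term satisfies $\tfrac1n(-\lceil n\Delta\rceil\cdot E)\to(-\Delta\cdot E)=(-\Delta\cdot C(X,v))$ because $\lceil n\Delta\rceil/n\to\Delta$, while the second term is $-\tfrac{c_n}{n}(E\cdot E)$ with $(E\cdot E)$ a fixed constant. Hence the proposition follows at once, giving $\limsup_{n}\tfrac1n(I_n\mathcal O_{Y_n}\cdot C(Y_n,v))\le(-\Delta\cdot C(X,v))$, provided one knows that $c_n=o(n)$.

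\emph{Main obstacle.} The whole argument reduces to the estimate $c_n=o(n)$, equivalently $\tfrac1n v(I_n)\to v(\Delta)$: the asymptotic order of vanishing of the filtration along $v$ must equal its $\Delta$-value. The inequality $v(I_n)\ge v(\lceil n\Delta\rceil)\ge n\,v(\Delta)$ is immediate, so all the content lies in the matching upper bound, and this is precisely where the hypothesis that $-\Delta$ is nef is indispensable: $c_n$ is the multiplicity of $E$ in the fixed part of $|-\lceil n\Delta\rceil|$, and nef-ness forces the negative part of the (relative) Zariski decomposition of $-\Delta$ to vanish, so these fixed-part multiplicities grow sublinearly. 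I would deduce $c_n=o(n)$ from the Zariski-decomposition and positive-intersection machinery for the projective $R$-scheme $X$ developed in \cite{C3} (the same circle of ideas underlying \autoref{rem:anti_pos}). This is the one genuinely non-formal input; the rest of the proof is bookkeeping with effectivity, the projection formula, and the elementary fact that the fractional correction $\lceil n\Delta\rceil-n\Delta$ contributes only an $O(1)$ to the intersection with $E$.
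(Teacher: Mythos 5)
Your overall strategy coincides with the paper's: pass to a nonsingular model $X'\to X$ on which the center of $v$ is a curve, pull back $\Delta$, compare $I_n\mathcal O_{Y_n}$ with $\mathcal O_{X}(-\lceil n\Delta\rceil)$ corrected by the fixed part of its linear system, and use effectivity together with the projection formula to reduce everything to a sublinear bound on the fixed-part multiplicity $c_n$ along $E$. The reduction to a one-dimensional center, the independence of the answer from the choice of $Y_n$, and the inequality $(-D_n\cdot\widetilde E_n)\ls(-\lceil n\Delta\rceil\cdot E)-c_n(E\cdot E)$ are all correct and are essentially what the paper does (the paper phrases the comparison via a model $Z_n$ dominating both $X'$ and $Y_n$, writing $I_n\mathcal O_{Z_n}=\mathcal O_{Z_n}(-f_n^*M_n-F_n)$ with $F_n$ effective and supported over points, but it is the same computation).

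The gap is the step you flag yourself: you never prove $c_n=o(n)$, and since $(E\cdot E)<0$ the correction term $-c_n(E\cdot E)$ is positive, so without this estimate the final display does not close. This is the entire non-formal content of the proposition; everything else is bookkeeping. Asserting that it "would follow" from Zariski-decomposition and positive-intersection machinery is not a proof, and \cite{C3} is not where the needed statement lives. The paper's proof rests on \cite[Proposition 5.3]{CNag}: after writing $n=m(n)r+s(n)$ with $r\Delta$ integral (so that only finitely many fractional parts $\lceil s(n)\Delta'\rceil$ occur), the fixed component of $-m(n)r\Delta'$ is bounded by a single effective exceptional divisor independent of $n$, whence the fixed components $G_n$ of $-\lceil n\Delta'\rceil$ are uniformly bounded and in fact $c_n=O(1)$, not merely $o(n)$. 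The heuristic "nef implies trivial negative part of the Zariski decomposition implies sublinear fixed part" is the right moral, but in this relative, local, $\QQ$-divisor setting it is precisely the content of the cited result and must be established or correctly cited rather than invoked; as written, the crucial estimate is missing.
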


\begin{proof} 
Fix $v\in \Div(\m_R)$. Let $r\in \ZZ_{>0}$ be such that  $r\Delta$ is an integral divisor. 
	We claim that there exists a birational projective morphism $a:X'\rightarrow X$ such that $X'$ is nonsingular and $C(X',v)$ is a projective curve 
    over $\kk=R/\m_R$. To prove this claim, we 
	 suppose that $C(X,v)$ is a closed point on $X$. 
     Then the center of $v$ on $R$ is $\m_R$. 
     Let $\mathcal O_v$ be the valuation ring of $v$, with maximal ideal $\m_v$. Thus $\m_v\cap R=\m_R$, giving an inclusion $R/\m_R\rightarrow \mathcal O_v/\m_v$. 
     Since $\dim(R)=2$ and $v$ is divisorial, $\trdeg_{R/\m_R}\left(\mathcal O_v/\m_v\right)=1$, so there exists $t\in \mathcal O_v$ such that the class of $t$ in $\mathcal O_v/\m_v$ is transcendental over $R/\m_R$. 
     Since $t$ is in the quotient field of $R$, there exist $f,g\in R$ such that $t=\frac{f}{g}$. 
     Let $X'$ be a resolution of singularities of the blowup of the ideal sheaf $(f,g)\mathcal O_X$. There exists an affine open subset $T=\Spec(S)$ of $X'$ such that 
	$
	t=\frac{f}{g}\in S
	$ 
     and $S\subset \mathcal O_v$.
	Let $Q=\m_v\cap S$. From the inclusions
    $$
   \kk \rightarrow S/Q\rightarrow \mathcal O_v/\m_v
    $$
    we see that $\trdeg_{\kk}\left(\Quot(S/Q)\right)=1$, 
where $\Quot(S/Q)$ is the quotient field of $S/Q$.
    By applying the dimension formula in \cite[Theorem 15.6]{Mat} to the inclusion $R\hookrightarrow S$ and $\m_R=Q\cap R$, we obtain  $$\height(Q)+\trdeg_{\kk}\left(\Quot(S/Q\right)=\dim(R)=2,$$
    and then $\dim(S/Q)=\dim(S)-\height(Q)=2-1=1$.  
    Since 	$S/Q$ is the coordinate ring of $C(X',v)\cap T$, it follows that $C(X',v)$ is a curve, completing the proof of the claim.
	
	Let $\Delta'=a^*(\Delta)$. Then $\Delta'$ is an effective $\QQ$-divisor on $X'$ such that 
	$$
	\Gamma(X',\mathcal O_{X'}(-\lceil n\Delta'\rceil))=\Gamma(X,\mathcal O_X(-\lceil n\Delta\rceil))=I_n
	$$
	for all $n\gs 0$ and $-\Delta'$ is nef on $X'$. 
    
    For $n\in \ZZ_{>0}$, write $n=m(n)r+s(n)$ with $m(n), s(n)$ nonnegative integers, and $0\ls s(n)< r$. 
    We thus have that $-\lceil n\Delta'\rceil =-m(n)r\Delta'-\lceil s(n)\Delta'\rceil$.
	Write $-\lceil n\Delta'\rceil =-M_n+G_n$ where $G_n$ is the fixed component of $-\lceil n\Delta'\rceil$.
	The fixed component of $-m(n)r\Delta'$ is bounded  by an effective exceptional divisor $A$ for $n> 0$ by  \cite[Proposition 5.3]{CNag}, and the fixed component of $-\lceil s(n)\Delta'\rceil$ 
    is bounded by an effective exceptional divisor $B$ for every $n$.  Thus the fixed component $G_n$ of $-\lceil n\Delta'\rceil$ is bounded by $A+B$ for all $n\gs 1$. 
	
	For $n\in \ZZ_{>0}$, let $f_n:Z_n\rightarrow X'$ be 
    a  birational projective $R$-morphism such that $Z_n$ is nonsingular, there is an $R$-morphism $h_n:Z_n\rightarrow Y_n$, and
    $I_n\mathcal O_{Z_n}$ is an invertible sheaf.
    We have that
    $$
    \Gamma(X',\mathcal O_{X'}(-M_n))=\Gamma(X',\mathcal O_{X'}(-M_n+G_n))=I_n,
    $$
    and since $\Gamma(X',\mathcal O_{X'}(-M_n))$ has  no fixed component, $I_n\mathcal O_{X'}=\mathcal J_n\mathcal O_{X'}(-M_n)$, where $\mathcal J_n$ is an ideal sheaf of $X'$ such that $\dim \Supp(\mathcal O_{X'}/\mathcal J_n)\ls 0$. Then since $I_n\mathcal O_{Z_n}$ is invertible, 
    $$
    I_n\mathcal O_{Z_n}=\mathcal J_nf_n^*\mathcal O_{X'}(-M_n)
    =f_n^*\mathcal O_{X'}(-M_n)\mathcal O_{Z_n}(-F_n)=\mathcal O_{Z_n}(-f_n^*M_n-F_n)
    $$
    where $F_n$ is the effective divisor on $Z_n$ such that $\mathcal J_n\mathcal O_{Z_n}=\mathcal O_{Z_n}(-F_n)$. Thus $F_n$ is effective and exceptional for $f_n$ since $\dim \Supp(\mathcal O_{X'}/\mathcal J_n)\ls 0$, so that $f_n(F_n)$ is a finite union of closed points on $X'$. 
      In particular, $C_n:=C(Z_n,v)$ is not a component of $F_n$  since  $f_n(C_n)=C(X',v)$ is a curve.  Thus $(F_n\cdot C_n)=\deg(\mathcal O_{Z_n}(F_n)\otimes\mathcal O_{C_n})\gs 0$.	Therefore
	$$
	(I_n\mathcal O_{Z_n}\cdot C_n)=(-f_n^*(M_n)\cdot C_n)-(F_n\cdot C_n)
	\ls (-M_n\cdot (f_n)_*(C_n))=((-\lceil n\Delta'\rceil-G_n)\cdot C(X',v)).
	$$
	The effective divisors $G_n$ are bounded for $n>0$, so that $\{G_n\}_{n>0}$ is a finite set.
	Thus there exist $b,b'\in \ZZ_{>0}$ such that 
			$$(I_n\mathcal O_{Z_n}\cdot C_n)\ls (-m(n)r\Delta'\cdot C(X',v))+b'
			=(-m(n)r\Delta\cdot C(X,v))+b'\ls n(-\Delta \cdot C(X,v))+b,$$
    where the equality follows from the projection formula of \cite[Proposition 2.6(c)]{CM}.
	
   We have that $h_n^*(I_n\mathcal O_{Y_n})\cong I_n\mathcal O_{Z_n}$.
	Thus
	$$
		(I_n\mathcal O_{Y_n}\cdot C(Y_n,v))=
		(I_n\mathcal O_{Y_n}\cdot (h_n)_*(C_n))
		=((h_n)^*(I_n\mathcal O_{Y_n})\cdot C_n)=
		(I_n\mathcal O_{Z_n}\cdot C_n)
	$$
	by the projection formula of \cite[Proposition 2.6(c)]{CM}.
	Thus for all $n\in \ZZ_{n>0}$,
	$$
	(I_n\mathcal O_{Y_n}\cdot C(Y_n,v))\ls n(-\Delta\cdot C(X,v))+b,
	$$
	whence the result follows.
\end{proof}

We obtain the following immediate corollary.

\begin{corollary}\label{CorLim}
Under the notations in \autoref{PropLim1}, if $(-\Delta\cdot C(X,v))=0$, then 
	$$
	\lim_{n\rightarrow \infty}\frac{(I_n\mathcal O_{Y_n}\cdot C(Y_n,v))}{n}=0.
	$$
\end{corollary}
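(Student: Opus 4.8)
The plan is to combine the upper bound already furnished by \autoref{PropLim1} with a matching lower bound coming from the positivity of $I_n\mathcal O_{Y_n}$. Writing $a_n=\frac{(I_n\mathcal O_{Y_n}\cdot C(Y_n,v))}{n}$, \autoref{PropLim1} together with the hypothesis $(-\Delta\cdot C(X,v))=0$ gives
$$
\limsup_{n\to\infty} a_n \ls 0.
$$
Thus it suffices to show that $\liminf_{n\to\infty} a_n \gs 0$, since then the inequalities $0\ls \liminf_{n\to\infty} a_n\ls \limsup_{n\to\infty} a_n\ls 0$ force $\lim_{n\to\infty}a_n=0$.

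For the lower bound I would argue that each numerator $(I_n\mathcal O_{Y_n}\cdot C(Y_n,v))$ is nonnegative. The key observation is that $I_n\mathcal O_{Y_n}$ is generated by its global sections, namely by the images of the elements of $I_n\subset R=\Gamma(Y_n,\mathcal O_{Y_n})$, so that $I_n\mathcal O_{Y_n}$ is a globally generated invertible ideal sheaf. I then split into two cases according to the dimension of the center $C(Y_n,v)$, exactly as in the discussion in \autoref{SubSecDeg} and in \autoref{ExS0}. If $C(Y_n,v)$ is a closed point, then $(I_n\mathcal O_{Y_n}\cdot C(Y_n,v))=0$ for dimension reasons: the relative dimension of a point is $-d$, and intersecting with one Cartier divisor produces a cycle in $A_{-d-1}$, which contributes nothing to $A_{-d}(Y_n/\Spec(R))$.

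If instead $C(Y_n,v)$ is a curve, then since $v$ is centered at $\m_R$ this curve $E:=C(Y_n,v)$ lies in the fiber $\pi^{-1}(\m_R)$ and is therefore a projective curve over $\kk$. By the identification recalled in \autoref{SecRR}, $(I_n\mathcal O_{Y_n}\cdot C(Y_n,v))=\deg\big(I_n\mathcal O_{Y_n}\otimes\mathcal O_E\big)$. Because $I_n\mathcal O_{Y_n}$ is globally generated, so is its restriction $I_n\mathcal O_{Y_n}\otimes\mathcal O_E$, and a globally generated invertible sheaf on a projective curve over a field has nonnegative degree (a nonzero global section has only nonnegative order of vanishing). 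Hence $(I_n\mathcal O_{Y_n}\cdot C(Y_n,v))\gs 0$ in this case as well. This gives $a_n\gs 0$ for all $n$, so $\liminf_{n\to\infty}a_n\gs 0$, completing the argument.

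I do not anticipate a serious obstacle: the real content is in \autoref{PropLim1}, and the only thing requiring care is matching the intersection-theoretic conventions of \cite{CM}, in particular confirming that the numerator vanishes when the center is a point and is computed as the degree of a globally generated line bundle when the center is a curve. The squeeze then closes the argument immediately.
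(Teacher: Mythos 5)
Your argument is correct and is exactly the intended one: the paper labels \autoref{CorLim} an ``immediate'' consequence of \autoref{PropLim1}, the implicit point being precisely the squeeze you carry out, with the lower bound $\liminf_n a_n\gs 0$ coming from the nonnegativity of each $(I_n\mathcal O_{Y_n}\cdot C(Y_n,v))$. Your justification of that nonnegativity (zero when the center is a point, and otherwise the degree of the globally generated invertible sheaf $I_n\mathcal O_{Y_n}\otimes\mathcal O_{C(Y_n,v)}$ on a projective curve over $\kk$) is sound and matches how the paper treats these intersection numbers, e.g.\ in \autoref{ExS0}.
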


We continue with the second step of the proof of the theorem.

\begin{proposition}\label{PropLim2}
Under the notations in \autoref{TheoremLim}, 
if $(-\Delta\cdot C(X,v))>0$, then
	$$
	\liminf_{n\rightarrow\infty} \frac{(I_n\mathcal O_{Y_n}\cdot C(Y_n,v))}{n}\gs (-\Delta \cdot C(X,v)).
	$$
\end{proposition}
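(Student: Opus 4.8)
The plan is to establish the reverse of the inequality in \autoref{PropLim1}, so that the two together give the limit. I keep the notation of the proof of \autoref{PropLim1}: the model $a:X'\to X$ chosen so that $E':=C(X',v)$ is a curve, with $\Delta'=a^*\Delta$; the further model $f_n:Z_n\to X'$ with an $R$-morphism $h_n:Z_n\to Y_n$ and $I_n\mathcal O_{Z_n}$ invertible; $C_n:=C(Z_n,v)$; and $\mathcal L_n:=\mathcal O_{X'}(-M_n)$, where $-M_n=-\lceil n\Delta'\rceil+G_n$ has no fixed component and $G_n$ is bounded. As shown there, $(I_n\mathcal O_{Y_n}\cdot C(Y_n,v))=(I_n\mathcal O_{Z_n}\cdot C_n)=:e_n$, so the task is to bound $e_n$ from below. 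Two reductions come first. Since $(-\Delta\cdot C(X,v))>0$ while the product of a single divisor with a zero-dimensional center vanishes, $C(X,v)$ is a curve, i.e. $v=v_E$ for an integral component $E$ of $\pi^{-1}(\fm_R)$; moreover $E\subseteq\operatorname{Supp}\Delta$ with positive coefficient, since for $E\not\subseteq\operatorname{Supp}\Delta$ one has $(-\Delta\cdot E)=-\sum_i\lambda_i(E_i\cdot E)\ls 0$. Also $f_n$ restricts to a birational morphism $C_n\to E'$ of nonsingular curves, hence an isomorphism, and $I_n\mathcal O_{Z_n}$ is generated by the global sections coming from $I_n\subset R$; thus $I_n\mathcal O_{Z_n}\otimes\mathcal O_{C_n}$ is a globally generated invertible sheaf of degree $e_n$.

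The core step is to compare $e_n$ with the restricted linear system on $E'$. Set $V_n:=\operatorname{Im}\big(I_n\to H^0(E',\mathcal L_n\otimes\mathcal O_{E'})\big)$. Here \autoref{LemInj} is used to guarantee that the relevant restriction and multiplication maps are injective, precisely because $E'$ is not contained in the support of the fixed part of $-M_n$; consequently $I_n\mathcal O_{Z_n}\otimes\mathcal O_{C_n}$ is identified, via $C_n\isom E'$, with the moving part of $V_n$, and the morphism $\phi_n:C_n\to\mathbb P(V_n^*)$ attached to it has $\deg(\phi_n^*\mathcal O(1))=e_n$. Since a nondegenerate curve in $\mathbb P^{N}$ has degree at least $N$, I obtain
$$
e_n\;\gs\;\dim_\kk V_n-1 .
$$

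Next I estimate $\dim_\kk V_n$. The restriction sequence $0\to\mathcal L_n(-E')\to\mathcal L_n\to\mathcal L_n\otimes\mathcal O_{E'}\to 0$ gives
$$
\dim_\kk V_n\;=\;h^0(E',\mathcal L_n\otimes\mathcal O_{E'})-c_n,\qquad c_n\ls h^1\big(X',\mathcal O_{X'}(-M_n-E')\big).
$$
Because $(-M_n\cdot E')=n(-\Delta\cdot E)+O(1)$, the Riemann--Roch theorem on the curve $E'$ recalled in \autoref{SecRR} (together with the vanishing $H^1(E',\mathcal L_n\otimes\mathcal O_{E'})=0$ for $n\gg 0$) yields $h^0(E',\mathcal L_n\otimes\mathcal O_{E'})=n(-\Delta\cdot E)+O(1)$. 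Combining the three displays,
$$
e_n\;\gs\;n(-\Delta\cdot E)+O(1)-h^1\big(X',\mathcal O_{X'}(-M_n-E')\big).
$$

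It therefore remains to prove that $h^1(X',\mathcal O_{X'}(-M_n-E'))=o(n)$, and I expect this to be the main obstacle. Writing $-M_n-E'=-\lceil n\Delta'\rceil+P_n$ with $P_n=G_n-E'$ ranging over a finite set of divisors, what is needed is that $h^1(X',\mathcal O_{X'}(-\lceil n\Delta'\rceil+P))$ grows sublinearly for each fixed $P$; equivalently, that the base divisors of the restricted systems $V_n$ have degree $o(n)$, so that the $V_n$ fill up $E'$ to first order. The geometric reason is that $-\Delta'$ is nef and $(-\Delta'\cdot E')>0$, so $E'$ is not contained in the augmented base locus of $-\Delta'$; the estimate is the local analogue of the fact that, for a nef divisor, the restricted volume along such a curve equals the intersection number $(-\Delta\cdot E)$. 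I would establish this sublinear cohomology bound from the asymptotic Riemann--Roch and vanishing theory for nef divisors on resolutions of excellent normal surfaces developed in \cite{CNag} and \cite{C3}. Granting it, the last display gives $\liminf_{n\to\infty}e_n/n\gs(-\Delta\cdot E)$, which is the assertion of the proposition.
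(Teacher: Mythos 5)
Your overall strategy is the same as the paper's: restrict the linear system $I_n$ to the center curve, control the failure of surjectivity of $H^0(X')\to H^0(C)$ by an $H^1$, and convert this via Riemann--Roch on the curve into a lower bound for $\deg(I_n\mathcal O_{Z_n}\otimes\mathcal O_{C_n})$. Your bookkeeping is organized a bit differently (you bound $e_n$ below by $\dim_\kk V_n-1$ using the degree bound for nondegenerate curves, whereas the paper tracks the base divisor $\mathcal E_n=\mathcal B_n\otimes\mathcal O_C(\sum a_i(n)p_i)$ of the restricted system through an explicit tower of point blowups and rules out $\deg\mathcal E_n>f$ by a Riemann--Roch contradiction), and that reorganization is fine in principle. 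But the proof is incomplete at exactly the point where the content lies: you assume, and explicitly defer, the estimate $h^1\bigl(X',\mathcal O_{X'}(-M_n-E')\bigr)=o(n)$. This is not a formal consequence of $-\Delta'$ being nef with $(-\Delta'\cdot E')>0$: the divisor $-\lceil n\Delta'\rceil$ differs from the nef $\QQ$-divisor $-n\Delta'$ by an $n$-dependent (though bounded) correction, and no Kawamata--Viehweg or Fujita-type vanishing is available here, since $R$ is an arbitrary excellent normal $2$-dimensional local ring (arbitrary residue field and mixed/positive characteristic). The paper's proof rests on the uniform bound $\ell_R\bigl(H^1(X,\mathcal O_X(-\lceil n\Delta\rceil-C))\bigr)<e$ of \cite[Corollary 5.7]{CNag}, which is a genuinely nontrivial input; your sketch needs that result (or a proof of it), plus a further short argument to absorb the twist by $G_n$ (use that $(-\lceil n\Delta'\rceil\cdot F)\gs n(-\Delta'\cdot F)-O(1)\gs -O(1)$ for each component $F$ of $G_n$ by nefness, so the $h^1$ of the restriction to $G_n$ stays bounded). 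Without this the argument proves nothing.

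Two smaller points. First, $C_n\to E'$ is a birational morphism of integral projective curves, but neither curve need be nonsingular, so it need not be an isomorphism; what you actually need (and what the paper uses via \autoref{LemInj}) is only that pullback of sections of line bundles along it is injective and that $(f_n)_*C_n=C(X',v)$ with multiplicity one, so this is repairable. Second, the bound ``a nondegenerate curve in $\PP^N$ has degree at least $N$'' should be handled with care over the residue field $\kk$, which need not be algebraically closed; it is cleaner to use directly that $h^0(\mathcal L)\ls\deg\mathcal L+h^0(\mathcal O_{C_n})$ for an invertible sheaf $\mathcal L$ with a nonzero section on an integral projective curve, which gives $e_n\gs\dim_\kk V_n-h^0(\mathcal O_{C_n})$ with $h^0(\mathcal O_{C_n})$ constant, and this suffices for the asymptotic statement.
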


\begin{proof} 
Fix $v\in \Div(\m_R)$. We borrow the notation used in the proof of \autoref{PropLim1}. 
 By our construction of $X'$, $C'=C(X',v)$ is a  projective curve over the field
$\kk=R/\m_R$. Moreover, $\fm_R\mathcal O_{X'}=\mathcal O_{X'}(-D')$, where $D'$ is a SNC divisor on $X'$ since $\fm_R\mathcal O_X=\mathcal O_X(-D)$ and $D$ is a SNC divisor on $X$, $X'\rightarrow X$ is birational, and $X', X$ are nonsingular  of dimension 2.

Relabel $X'$ as $X$,  $\Delta'$ as $\Delta$, and $C'$ as  $C$, for simplicity of notation. 
We also use the notation and results of \autoref{SecRR} on the Riemann-Roch Theorem for projective curves  
regarding $C$ as a projective curve over $\kk$. 

From the
short exact sequences
	$$
	0\rightarrow \mathcal O_X(-\lceil n\Delta \rceil -C)\rightarrow \mathcal O_X(-\lceil n\Delta\rceil)\rightarrow \mathcal O_X(-\lceil n\Delta\rceil)\otimes\mathcal O_C\rightarrow 0
	$$
    we obtain the exact sequences
	\begin{align}\label{eqN20}
	\Gamma(X,\mathcal O_X(-\lceil n\Delta\rceil))\rightarrow \Gamma(C,\mathcal O_X(-\lceil n\Delta\rceil)\otimes\mathcal O_C)
    \rightarrow H^1(X,\mathcal O_X(-\lceil n\Delta\rceil-C)).
    \end{align}
	There exists a constant $e>0$ such that 
	\begin{equation}\label{eqLim4}
		\ell_R\left(H^1(X,\mathcal O_X(-\lceil n\Delta\rceil-C)\right)<e
	\end{equation}
	for $n>0$ by  \cite[Corollary 5.7]{CNag}. Moreover $h^0\left(\mathcal O_X(-\lceil n\Delta\rceil)\otimes\mathcal O_C\right)$ goes to infinity  as $n\rightarrow\infty$ by the Riemann-Roch Theorem,  recalled in    \mbox{\autoref{SecRR}}, 
    and since
	$$
	\deg\left(\mathcal O_X(-\lceil n\Delta\rceil)\otimes\mathcal O_C\right)=(-\lceil n\Delta\rceil\cdot C) \quad \text{and}\quad (-\Delta\cdot C)>0.
	$$
	  Therefore $\Gamma(X,\mathcal O_X(-\lceil n\Delta\rceil-C))\ne \Gamma(X,\mathcal O_X(-\lceil n\Delta\rceil))$ for $n$ sufficiently large, and so 
    there exists $n_0$ such that $C$ is not a component of the fixed component $G_n$ of $-\lceil n\Delta\rceil$  for $n\gs n_0$. We now restrict to $n\gs n_0$.
    
  Let $\sigma_n\in \Gamma(X,\mathcal O_X(G_n))$ be the section such that $\Div(\sigma_n)=G_n$ and let 
  $$
  \mathcal O_X(-\lceil n\Delta\rceil-G_n)
  \xrightarrow{\sigma_n}
  \mathcal O_X(-\lceil n\Delta\rceil)
  $$
  be the natural injection. Then
  $$
  0\rightarrow \mathcal O_X(-\lceil n\Delta\rceil-G_n)\otimes\mathcal O_C
  \xrightarrow{\sigma_n\otimes 1}
  \mathcal O_X(-\lceil n\Delta\rceil)\otimes\mathcal O_C
  $$
 is an injection by  \autoref{LemInj} since $C$ is not in the support of $\Div(\sigma_n)$ as $n\ge n_0$ by assumption, and so we have a commutative diagram
\begin{equation}\label{D1}
\begin{tikzcd}
&\Gamma(X,\mathcal O_X(-\lceil n\Delta\rceil-G_n)) \arrow[r, "\sim"] \arrow[d] 
  & \Gamma(X,\mathcal O_X(-\lceil n\Delta\rceil)) \arrow[d] \\
0\arrow[r] &\Gamma(C,\mathcal O_X(-\lceil n\Delta\rceil-G_n)\otimes\mathcal O_C) \arrow[r] 
  & \Gamma(X,\mathcal O_X(-\lceil n\Delta\rceil)\otimes\mathcal O_C).
\end{tikzcd}
  \end{equation} 
Thus
	$$
	U_n:=\Image\big(\Gamma(X,\mathcal O_X(-\lceil n\Delta\rceil))\rightarrow \Gamma(C,\mathcal O_X(-\lceil n\Delta\rceil)\otimes\mathcal O_C)\big)
    \subset \Gamma(C,\mathcal O_X(-\lceil n\Delta\rceil-G_n)\otimes\mathcal O_C)). 
	$$

	There exists a sequence of morphisms $\pi_{i+1}:X_{i+1}\rightarrow X_i$ for $i\gs 0$ such that $\pi_{i+1}$ is the blowup of a closed point $p_{i+1}$ on the strict transform $C_{i}$ of $C$ on $X_i$, with exceptional divisor $E_{i+1}$, and such that for $n\gs n_0$, there exists a positive integer $\sigma(n)$ with $I_n\mathcal O_{X_{\sigma(n)}}$ invertible in a neighborhood of $C_{\sigma(n)}$. This is constructed as follows. Let $Z_1$ be a resolution of singularities of the blowup of $I_1\mathcal O_X$. Since $X$ is nonsingular, $Z_1\rightarrow X$ is a sequence of blowups of closed points by \cite[Theorem 4.1]{L}. 
    Let $T_1\rightarrow X$ be the sequence of blowups of points which is obtained by only blowing up points in the sequence $Z_1\rightarrow X$ which are on the strict transform of $C$. Then $I_1\mathcal O_{T_1}$ is invertible in a neighborhood of the strict transform of $C$ on $T_1$. Let $Z_2$ be a resolution of singularities of the blowup of $I_2\mathcal O_{T_1}$. Then $Z_2\rightarrow T_1$ is again a sequence of blowups of closed points, and letting $T_2\rightarrow T_1$ be  the sequence of blowups of points which is obtained by only blowing up points in the sequence $Z_2\rightarrow T_1$ which are on the strict transform of $C$, we obtain that $I_2\mathcal O_{T_2}$ is invertible. By  induction, we  construct the morphisms $\pi_{i+1}:X_{i+1}\rightarrow X_i$.


Assume that 
$$
	p_1\in \Bs(\Gamma(X,\mathcal O_X(-\lceil n\Delta\rceil-G_n))\cap C.
	$$ 
    Then the fixed component of $\pi_1^*(-\lceil n\Delta\rceil-G_n)$ is $a_1(n)E_1$ for some $a_1(n)>0$, and so the fixed component of $\pi_1^*(-\lceil n\Delta\rceil)$ is $\pi_1^*(G_n)+a_1(n)E_1$. Let $C_1\cong C$ be the strict transform of $C$ on $X_1$ (the Zariski closure of $\pi_1^{-1}(C\setminus \{p_1\})\cong C\setminus \{p_1\}$ in $X_1$).  Since $C_1$ is not in the support of the divisor $a_1(n)E_1$, using  \autoref{LemInj} again we obtain a commutative diagram with $\mathcal L_n:=\mathcal O_X(-\lceil n\Delta\rceil-G_n)$ and $\mathcal M_n:=\mathcal O_{X_1}(-a_1(n)E_1)$ 
    \begin{equation}\label{D2}
    \begin{tikzcd}
    &\Gamma(X_1,\pi_1^*\mathcal L_n\otimes \mathcal M_n) \arrow[r, "\sim"] \arrow[d] 
    & \Gamma(X_1,\pi_1^*\mathcal L_n) \arrow[r, "\sim"] \arrow[d] & \Gamma(X,\mathcal L_n)\\
    0\arrow[r] &\Gamma(C_1,\pi_1^*\mathcal L_n\otimes \mathcal M_n\otimes\mathcal O_{C_1}) \arrow[r] \arrow[d] 
    & \Gamma(C_1,\pi_1^*\mathcal L_n\otimes\mathcal O_{C_1}) \arrow[d] &\\
    0\arrow[r] &\Gamma(C,\mathcal L_n\otimes\mathcal O_C(-a_1(n)p_1)) \arrow[r] 
    & \Gamma(C,\mathcal L_n\otimes\mathcal O_C),&
    \end{tikzcd}
     \end{equation}
	where we are using the isomorphism 
	$(\pi_1)_*(\mathcal M_n \otimes\mathcal O_{C_1})\cong \mathcal O_C(-a_1(n)p_1)$. 
	Combining diagrams \autoref{D1} and \autoref{D2} we have a natural  inclusion
\begin{equation}\label{eq:Un_contain}
	U_n\subset \Gamma(C,\mathcal O_X(-\lceil n\Delta\rceil-G_n)\otimes\mathcal O_C(-a_1(n)p_1)).
	\end{equation} 
	If $p_1\not\in \Bs(\Gamma(X,\mathcal O_X(-\lceil n\Delta\rceil-G_n))$, then \autoref{eq:Un_contain} holds with $a_1(n)=0$.

	By induction on $n$ and the argument leading to the inclusion  \autoref{eq:Un_contain}, we obtain  	
		\begin{align}\label{eqLim3}			U_n&\subset \Gamma(C,\mathcal O_X(-\lceil n\Delta\rceil)\otimes\mathcal B_n^{-1}\otimes \mathcal O_C(-a_1(n)p_1-\cdots - a_{\sigma(n)}(n)p_{\sigma(n)}))
		\end{align}
where $\mathcal B_n=\mathcal O_X(G_n)\otimes\mathcal O_C$.
 There exists a positive integer $r$ such that $r\Delta$ is an integral divisor and since $(-\Delta\cdot C)>0$,  we can also replace  $r$ with a multiple of $r$ so that 
$
\deg(\mathcal O_X(-r\Delta)\otimes\mathcal O_C)>2p_a(C)-2
$. 
Let 
$$
f=e+\deg(\mathcal O_X(-r\Delta)\otimes\mathcal O_C)
$$
where $e$ is defined by \autoref{eqLim4}.

  Let $\mathcal E_n:=\mathcal B_n\otimes\mathcal O_C(a_1(n)p_1+\cdots+a_{\sigma(n)}(n)p_{\sigma(n)})$,	  we  now show that  
	\begin{align}\label{eqLim7}
	0&\ls \deg\left(\mathcal E_n\right)
	=\deg(\mathcal B_n)+a_1(n)[R(p_1):\kk]+\cdots+a_{\sigma(n)}(n)[R(p_{\sigma(n)}):\kk]\ls  f,
     \end{align}
	for all $n\gg0$,  where $R(p_i)$ is the residue field of 
	$\mathcal O_{X,p_i}$ 
    for each $i$. 
	Note that  
	\begin{align*}
	\dim_{\kk}(\Gamma(C,\mathcal O_X(-\lceil n\Delta\rceil)\otimes\mathcal O_C)/U_n)	&=\ell_R(\Gamma(C,\mathcal O_X(-\lceil n\Delta\rceil)\otimes\mathcal O_C)/U_n)\\
	&\ls
	\ell_R(H^1(X,\mathcal O_X(-\lceil n\Delta\rceil-C))< e
	\end{align*}
     by (\ref{eqN20}). 
	Thus, using \autoref{eqLim3}, we have 
		\begin{align}\label{eqLim6}
			e> &h^0(\mathcal O_X(-\lceil n\Delta\rceil)\otimes\mathcal O_C)-h^0(\mathcal O_X(-\lceil n\Delta\rceil)\otimes\mathcal E_n^{-1}).
		\end{align}
%
 Suppose that \autoref{eqLim7} does not hold, i.e., $\deg(\mathcal E_n)>f$.  We will derive a contradiction.  Since $(-\Delta\cdot C)>0$, there exists $n_1$ such that
	$
	\deg(\mathcal O_X(-\lceil n\Delta\rceil)\otimes\mathcal O_C)>2p_a(C)-2
	$
	for $n\gs n_1$, so that 
	$$
	h^0(\mathcal O_X(-\lceil n\Delta\rceil)\otimes\mathcal O_C)=\chi(\mathcal O_X(-\lceil n\Delta\rceil)\otimes\mathcal O_C))
	$$
	for  $n\gs n_1$ by the Riemann-Roch Theorem. 
	First  suppose  that $n\gs n_1$ 
    and $$\deg(\mathcal O_X(-\lceil n\Delta\rceil)\otimes\mathcal E_n^{-1})>2p_a(C)-2.$$
Then, from \autoref{eqLim6}
\begin{align*}
e&> h^0(\mathcal O_X(-\lceil n\Delta\rceil)\otimes\mathcal O_C)
-h^0(\mathcal O_X(-\lceil n\Delta\rceil)\otimes\mathcal E_n^{-1})\\
&= \chi(\mathcal O_X(-\lceil n\Delta\rceil)\otimes\mathcal O_C)-\chi(\mathcal O_X(-\lceil n\Delta\rceil)\otimes\mathcal E_n^{-1})\,\,=\,\,\deg(\mathcal E_n)>e,
\end{align*}
a contradiction.	 Now consider   $n\gs n_1$ and the remaining case, 
	$$
	\deg(\mathcal O_X(-\lceil n\Delta\rceil)\otimes\mathcal E_n^{-1})\ls 2p_a(C)-2.
	$$ 
Then
$$
h^0(\mathcal O_X(-\lceil n\Delta\rceil)\otimes\mathcal E_n^{-1})\ls h^0(\mathcal O_X(-\lceil n\Delta\rceil-r\Delta)\otimes\mathcal E_n^{-1})=\chi(\mathcal O_X(-\lceil n\Delta\rceil-r\Delta)\otimes\mathcal E_n^{-1}).
$$		
Therefore		
		\begin{align*}
		e&>h^0(\mathcal O_X(-\lceil n\Delta\rceil)\otimes\mathcal O_C)
-h^0(\mathcal O_X(-\lceil n\Delta\rceil)\otimes\mathcal E_n^{-1})\\
&= \chi(\mathcal O_X(-\lceil n\Delta\rceil)\otimes\mathcal O_C)-h^0(\mathcal O_X(-\lceil n\Delta\rceil)\otimes\mathcal E_n^{-1})\\
&\gs \chi(\mathcal O_X(-\lceil n\Delta\rceil)\otimes\mathcal O_C)-\chi(\mathcal O_X(-\lceil n\Delta\rceil-r\Delta)\otimes\mathcal E_n^{-1})\\
&= -\deg(\mathcal O_C(-r\Delta))+\deg(\mathcal E_n)>-\deg( \mathcal O_C(-r\Delta))+f=e,
	\end{align*}
 a contradiction. 
 Thus \autoref{eqLim7} holds for  $n\gs n_1$.

	Let $\tau_{n,m}:X_{\sigma(n)}\rightarrow X_m$ be the natural morphisms for $\sigma(n)\gs m$. 
	Since $I_n\mathcal O_{X_{\sigma(n)}}$ is invertible in a neighborhood of $C_{\sigma(n)}$, we have 
    \begin{align*}
    I_n\mathcal O_{X_{\sigma(n)}}
	=
    \mathcal O_{X_{\sigma(n)}}\big(-\tau_{n,0}^*(-\lceil n\Delta\rceil-G_n)-
    \sum_{i=1}^{\sigma(n)-1}a_i(n)\tau_{n,i}^*(E_i)
   -a_{\sigma(n)}(n)E_{\sigma(n)}\big)\mathcal I_n,
    \end{align*}
	where $\mathcal I_n$ is an ideal sheaf on $X_{\sigma(n)}$ such that the  support of $\mathcal O_{X_{\sigma(n)}}/\mathcal I_n$  is disjoint from $C_{\sigma(n)}$. Let $\lambda_n:V_n\rightarrow X_{\sigma(n)}$ be a resolution of singularities of the blowup of $\mathcal I_n$, so that  $I_n\mathcal O_{V_n}$ is invertible. 
	Let $\tilde{C}$ be the strict transform of $C_{\sigma(n)}$ on $V_n$. 
	Then
    \begin{align*}
    	&(I_n\mathcal O_{V_n}\cdot C(V_n,v))
			=(I_n\mathcal O_{V_n}\cdot \tilde{C})\\
			&=\deg(I_n\mathcal O_{V_n}\otimes\mathcal O_{\tilde{C}})=\deg(I_n\mathcal O_{X_{\sigma(n)}}\otimes\mathcal O_{C_{\sigma(n)}})\\
          &=\deg\big(\mathcal O_{X_{\sigma(n)}}\big(-\tau_{n,0}^*(-\lceil n\Delta\rceil-G_n)-
    \sum_{i=1}^{\sigma(n)-1}a_i(n)\tau_{n,i}^*(E_i)
   -a_{\sigma(n)}(n)E_{\sigma(n)}\big)\otimes\mathcal O_{C_{\sigma(n)}}\big)\\  
	&=\deg(\mathcal O_X(-\lceil n\Delta\rceil)\otimes \mathcal E^{-1}_n
   ))
	\gs (-\lceil n\Delta\rceil\cdot C(X,v))- f.
    \end{align*}
	Thus
	$$
	\liminf_{n\rightarrow\infty}\frac{(I_n\mathcal O_{Y_n}\cdot C(Y_n,v))}{n}
	= \liminf_{n\rightarrow\infty}\frac{(I_n\mathcal O_{V_n}\cdot C(V_n,v))}{n}
		\gs (-\Delta\cdot C(X,v)).
	$$
\end{proof}


\section{When is the set of Rees valuations of the members of a graded family finite?}

We now consider the question of when a graded family of ideals $\II=\{I_n\}_{n\gs 0}$ has only finitely many Rees valuations; that is, when the union of the sets of Rees valuations of all the $I_n$ is a finite set.
To begin with, we observe that if $\II$ is a Noetherian filtration, then under mild assumptions on the ring  the union of the sets of Rees valuations of all the ideals $I_n$ is finite.

\begin{proposition}\label{prop:Rees_finite} Let $(R,\fm_R,\kk)$ be  an analytically unramified local domain 
 and $\II=\{I_n\}_{n\gs 0}$ be a graded family of ideals such that $R[\II]$ is Noetherian. Then  the union of the sets of Rees valuations of all the ideals $I_n$ is finite. 
\end{proposition}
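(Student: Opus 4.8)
The plan is to manufacture a single normal birational model of $\Spec(R)$ on which \emph{every} member $I_n$ of the family becomes invertible, and then to read off all the Rees valuations from the finitely many prime exceptional divisors of that one model. The input that makes this possible is the Noetherianity of $R[\II]$, which I would first convert into rigid multiplicative structure via a Veronese argument. Since $R[\II]=\oplus_{n\gs 0} I_n t^n$ is a finitely generated $\NN$-graded $R$-algebra, there is an integer $e\gs 1$ such that the Veronese subalgebra $R[\II]^{(e)}=\oplus_{n\gs 0} I_{ne}t^{ne}$ is generated in degree one, so that $I_{ne}=I_e^{\,n}$ for all $n\gs 0$, and $R[\II]$ is a finite module over $R[\II]^{(e)}$. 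Splitting $R[\II]$ into residue classes modulo $e$, each graded piece $M_r=\oplus_{n\gs 0} I_{ne+r}t^{ne+r}$ is a finitely generated graded module over the standard graded ring $R[\II]^{(e)}$, whence for each $r$ there is $N_r$ with $I_{(n+1)e+r}=I_e\, I_{ne+r}$ for all $n\gs N_r$. Iterating this down to the base degree shows that every member is a \emph{single} product $I_m=I_e^{\,k}I_j$ with $k\gs 0$ and $j$ lying in a fixed finite set $F$ of indices (one may take $F$ to contain $e$ together with all indices below $\max_r(N_r e+r)$).

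With this reduction in hand I would build the model. Set $\mathfrak a=\prod_{j\in F}I_j$ and let $Y=\Proj\bigl(\oplus_{n\gs 0}\overline{\mathfrak a^{\,n}}\bigr)$ be the normalized blowup of $\mathfrak a$. Because $R$ is analytically unramified, $\oplus_{n\gs 0}\overline{\mathfrak a^{\,n}}$ is module-finite over $R[\mathfrak a t]$ by \cite[Corollary 9.21]{HS}, so $Y$ is an integral normal scheme, projective and birational over $\Spec(R)$, and $\mathfrak a\mathcal O_Y$ is invertible. Here I would use the simple but decisive point that on an integral scheme a product of ideal sheaves being invertible forces each factor to be invertible: at each stalk $\bigl(\prod_{j}I_j\mathcal O_Y\bigr)$ is principal and generated by a nonzerodivisor, so each nonzero factor $I_j\mathcal O_Y$ has an inverse fractional ideal and is therefore invertible; peeling off the factors one at a time gives that $I_j\mathcal O_Y$ is invertible for every $j\in F$ (in particular for $j=e$). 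Consequently $I_m\mathcal O_Y=(I_e\mathcal O_Y)^{k}(I_j\mathcal O_Y)$ is invertible for every $m\gs 1$.

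To conclude, fix $m$. Since $Y$ is normal and $I_m\mathcal O_Y$ is invertible, $Y$ factors through the normalized blowup $Y_m=\Proj\bigl(\oplus_{n}\overline{I_m^{\,n}}\bigr)$ of $I_m$, exactly as in the factorization step in the proof of \autoref{Bupval}. By the first paragraph of \autoref{SubSecDeg}, the Rees valuations of $I_m$ are the valuations $v_E$ attached to the prime exceptional divisors $E$ of $Y_m$; for each such $E$ the center of $v_E$ on the normal scheme $Y$ is a prime divisor $E'$ with $\mathcal O_{Y,E'}=\mathcal O_{v_E}$ (the center has dimension $d-1$ because the map to $Y_m$ is birational, and a one-dimensional normal local ring dominated by a divisorial valuation ring equals it), so $v_E=v_{E'}$. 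Thus every Rees valuation of every $I_m$ has the form $v_{E'}$ for a prime divisor $E'$ of $Y$ contracted by $\pi\colon Y\to\Spec(R)$. As $\pi$ is birational and of finite type there are only finitely many such $E'$, so the union over all $n$ of the sets of Rees valuations of the $I_n$ is finite.

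I expect the main obstacle to be the first two steps rather than the last: concretely, the algebraic reduction that upgrades finite generation of the Veronese module into the statement that \emph{each} $I_m$ is a single product $I_e^{\,k}I_j$ (not merely a sum of such), combined with the trick of passing to one normalized blowup of the product ideal $\mathfrak a$ so that all members become simultaneously invertible. Once a single normal finite-type model carrying invertibility of all the $I_m$ is in place, the identification of Rees valuations with centers of prime divisors on $Y$ is routine given the machinery already developed for \autoref{Bupval}.
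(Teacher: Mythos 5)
Your argument is correct and follows essentially the same route as the paper's proof: a Veronese/Bourbaki reduction writing every $I_m$ as $I_e^{k}I_j$ with $j$ in a fixed finite set, a single normal projective birational model $Y$ on which all the $I_m\mathcal O_Y$ are simultaneously invertible (the paper obtains such a model by citing \cite[Corollary 9.2.1]{HS} for the finitely many building-block ideals, rather than by your blowup of the product $\mathfrak a$ combined with the lemma that an invertible product of ideal sheaves has invertible factors, but the effect is identical), followed by reading off all Rees valuations from finitely many prime divisors of that one model. The only point to repair is the phrase ``contracted by $\pi$'' in your last step: the $I_n$ are not assumed $\fm_R$-primary, so the center $E'$ of a Rees valuation of $I_m$ on $Y$ may map birationally onto a height-one subvariety of $\Spec(R)$ rather than being contracted; the finiteness you need comes instead from the observation that each such $E'$ is an irreducible component of $V(I_m\mathcal O_Y)\subset V(\mathfrak a\mathcal O_Y)$ and that $V(\mathfrak a\mathcal O_Y)$ is an effective Cartier divisor with only finitely many components (the paper's counterpart of this step is the inclusion $\Supp(R/I_n)\subset\Supp(R/I_1)$).
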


\begin{proof} The Rees algebra $R[\II]$ is a finitely generated $R$-algebra since it is Noetherian. There exists $d>0$ such that $R[\II]^{(d)}:=\oplus_{n\gs 0}\,I_{nd}$ is generated in degree 1 by \cite[Proposition III.1.3]{Bou}. By \cite[Lemma III.1.1]{Bou}, there exists $m_0>0$ such that $I_{nd+k}=I_{nd}I_k$ for $m_0d\ls k<m_0(d+1)$ and $n\gs 0$.  Since $R$ is analytically unramified, there exists by \cite[Corollary 9.2.1]{HS} an $R$-morphism 
$\pi:X\rightarrow\Spec(R)$ such that $X$ is normal, projective over $\Spec(R)$, and $I_l\mathcal O_X$ is invertible for $l<m_0(d+1)$. Then $I_n\mathcal O_X$ is invertible for all $n\gs 0$.

Let $\pi_n:X_n\rightarrow \Spec(R)$ be the normalized blowup of $I_n$, which is a projective morphism by \cite[Corollary 9.2.1]{HS}. The Rees valuations of $I_n$ are the equivalence classes of valuations $v_{n,i}$ for $1\ls i\ls \lambda_n$ whose valuation ring is $\mathcal O_{X_n,E_{n,i}}$, where 
$$
I_n\mathcal O_{X_n}=\mathcal O_{X_n}(-a_1E_{n,1}-\cdots-a_{n,\lambda_n}E_{n,\lambda_n})
$$
with $E_{n,i}$ being prime Weil divisors on $X_n$. Since $I_n\mathcal O_X$ is invertible, there exist natural projective $R$-morphisms $\phi_n:X\rightarrow X_n$ for $n\gs 1$. Now $I_1^n\subset I_n$ for all $n$ so that we have an inclusion of supports $\Supp(R/I_1)\supset \Supp(R/I_n)$ for all $n$. The strict transforms of the $E_{n,i}$ are prime divisors on $X$ that are contained in $\pi^{-1}(\Supp(R/I_1))$, which contains only a finite number of prime divisors of $X$. Thus the union of all the Rees valuations of all the $I_n$ is a finite set.
\end{proof}

In the remainder of this section we show the examples mentioned in \autoref{rem:dim2_assump}. In \autoref{ExS1} we prove the existence of non-Noetherian divisorial filtrations of $\m_R$-primary ideals in  $2$-dimensional normal local rings satisfying the condition of  \autoref{dim2lim}, i.e.,   with finite total number of Rees valuations. On the other hand, in \autoref{ExS2}  we prove that not all   divisorial filtrations of $\m_R$-primary ideals in  $2$-dimensional normal local rings satisfy this condition. 

The graded filtration $I_n=\m_R$ for $n>0$ gives a simple example of a non-Noetherian graded family such that the union of the sets of Rees valuations of all the ideals $I_n$ is finite.

\begin{example}\label{ExS1} There exists a non-Noetherian divisorial filtration of $\m_R$-primary ideals $\II=\{I_n\}_{n\gs 0}$ in a $2$-dimensional  normal local ring $R$ that is essentially of finite type over a field such that the union of the sets of Rees valuations of all the ideals $I_n$ is finite. 
\end{example}

\begin{proof} Let $C$ be an elliptic curve over an algebraically closed field and $p\in C$ be a closed point. Let $Z=\PP(\mathcal O_C\oplus\mathcal O_C(-p))$ be the projective bundle over $C$
	with natural projection $\lambda:Z\rightarrow C$. Let $C_0$ be the zero section of $\lambda$, so that 
	$\mathcal O_{C_0}\otimes\mathcal O_Z(-C_0)\cong \mathcal O_C(-p)$. Let $Z\rightarrow W$ be the contraction of $C_0$ to a closed point $a$ on a normal surface $W$, which exists by  \cite[Proposition II.8.8.2 \& Remark II.8.8.3]{EGA}. Let $R=\mathcal O_{W,a}$ and $X=Z\times_W\Spec(R)$  with natural birational projective morphism $\pi:X\rightarrow \Spec(R)$. We note that 
	$R$ is a $2$-dimensional  normal local ring such that $X\rightarrow \Spec(R)$ is a resolution of singularities and $X\setminus C_0\rightarrow \Spec(R)\setminus \{\m_R\}$ is an isomorphism. 
	
	Let $q\in C_0$ be such that $\mathcal O_C(p-q)$ has infinite order in the Jacobian of $C$, and let $\tau:Y\rightarrow X$ be the blowup of $q$ with exceptional divisor $E$. Let $C^*$ be the strict transform of $C_0$ on $Y$. Therefore
	$$
    E\cong \PP^1, \qquad  \mathcal O_E\otimes\mathcal O_Y(E)\cong \mathcal O_{\PP^1}(-1), \qquad \text{and}\qquad
	\mathcal O_E\otimes\mathcal O_Y(C^*)\cong \mathcal O_{\PP^1}(1).
    $$
	We  have  $C^*\cong C$ and 
	$$
	\mathcal O_C(-p)\cong \mathcal O_{C_0}\otimes\mathcal O_X(C_0)\cong \mathcal O_{C^*}\otimes\mathcal O_Y(\tau^*C_0)\cong \mathcal O_{C^*}\otimes\mathcal O_Y(C^*+E),
	$$
	so $\mathcal O_{C^*}\otimes\mathcal O_Y(C^*)\cong \mathcal O_C(-p-q)$. 
	Let $\Delta=C^*+2E$, then
    $$\mathcal O_{C^*}\otimes\mathcal O_Y(-\Delta)\cong \mathcal O_C(p-q)\qquad\text{and}\qquad
	\mathcal O_E\otimes\mathcal O_Y(-\Delta)\cong \mathcal O_{\PP^1}(1),$$ so that $-\Delta$ is nef.

	Define the divisorial filtration of $\m_R$-primary ideals $\II=\{I_n\}_{n\gs 0}$ on $R$ by 
	$$
	I_n=\Gamma(Y,\mathcal O_{Y_n}(-n\Delta))
	$$
	for $n\gs 0$. 
	Note that if $\mathcal L$ is a line bundle on the elliptic curve $C$, then $H^1(C,\mathcal L)=0$ if $\deg(\mathcal L)\gs 1$ by the Riemann-Roch Theorem, since $C$ has genus 1. Moreover, if $\mathcal L'$ is a line bundle on $E$, then $H^1(E,\mathcal L')=0$ if $\deg(\mathcal L')\gs -1$, since $E\cong\PP^1$. 
	For integers $m\gs 0$ and $n\gs 2$, we have
	\begin{equation}\label{Neq1}
		H^1(E,\mathcal O_E\otimes\mathcal O_Y(-(m+1)C^*-mE-n\Delta-2C^*))=0,
	\end{equation}
	since $\deg(\mathcal O_E\otimes\mathcal O_Y(-(m+1)C^*-mE-n\Delta-2C^*))=n-3$, and  for $m\gs 0$,  $n\gs 0$, we have
	\begin{equation}\label{Neq2}
		H^1(C^*,\mathcal O_{C^*}\otimes\mathcal O_Y(-mC^*-mE-n\Delta-2C^*))=0
	\end{equation}
	since $\deg(\mathcal O_{C^*}\otimes\mathcal O_Y(-mC^*-mE-n\Delta-2C^*))=2(m+2)-m=m+4>0$.

	For every integer $m\gs 0$ we have short exact sequences 
    \begin{align}\label{Neq5}
    &0\rightarrow \O_E\otimes\mathcal O_Y(-(m+1)C^*-mE)\rightarrow \mathcal O_{(m+1)C^*+(m+1)E}\rightarrow \mathcal O_{(m+1)C^*+mE}\rightarrow 0\\
   \nonumber&0\rightarrow \mathcal O_{C^*}\otimes\mathcal O_Y(-mC^*-mE)\rightarrow \mathcal O_{(m+1)C^*+mE}\rightarrow \mathcal O_{mC^*+mE}\rightarrow 0. 
    \end{align} 
	Tensoring \autoref{Neq5} with $\mathcal O_Y(-n\Delta-2C^*)$ for  $n\gs 2$, taking cohomology, and using \autoref{Neq1} and \autoref{Neq2}, we obtain 
	\begin{equation*}\label{Neq3}
		H^1(Y,\mathcal O_{mC^*+mE}\otimes\mathcal O_Y(-n\Delta-2C^*))=0\qquad\text{for all $m\gs 0$ and $n\gs 2$.}
	\end{equation*} 
	 Thus for all $n\gs 2$,
	$$
	H^1(Y,\mathcal O_Y(-n\Delta-2C^*))\otimes_R\hat R\cong \lim_{\leftarrow }H^1(Y,\mathcal O_{mC^*+mE}\otimes\mathcal O_Y(-n\Delta-2C^*))=0
	$$
	by 
    \cite[Theorem III.11.1]{H}, where $\hat{R}$ is the $\fm_R$-adic completion of $R$ and the inverse limit is over $m\in \ZZ_{>0}$. Thus
    \begin{equation}\label{Neqq}
		H^1(Y,\mathcal O_Y(-n\Delta-2C^*))=0\qquad \text{for $n\gs 2$,} 
	\end{equation}
    since $\hat R$ is a faithfully flat $R$-module.
	
	Similarly, for $m\gs 0$ and $n\gs 2$, we have 
	$$
	H^1(E,\mathcal O_E\otimes\mathcal O_Y(-(m+1)C^*-mE-n\Delta-C^*-E))=0,
	$$
	since $E\cong\PP^1$ and 
	$\deg(\mathcal O_E\otimes\mathcal O_Y(-(m+1)C^*-mE-n\Delta-C^*-E))=n-1>-1$,  
	and for $m\gs 0$ and $n\gs 2$, we have
	$$
	H^1(C^*,\mathcal O_{C^*}\otimes \mathcal O_Y(-mC^*-mE-n\Delta-C^*-E))=0
	$$
	since
	$\deg(\mathcal O_{C^*}\otimes \mathcal O_Y(-mC^*-mE-n\Delta-C^*-E))=m+1>0$.
	Therefore,  tensoring \autoref{Neq5} with $\mathcal O_Y(-n\Delta-C^*-E)$ and proceeding as in the proof of \autoref{Neqq}, we obtain 
	\begin{equation}\label{Neq4}
		H^1(Y,\mathcal O_Y(-n\Delta-C^*-E))=0\qquad \text{for $n\gs 2$.} 
	\end{equation}

	Now $\Gamma(C^*,\mathcal O_Y(-n\Delta))=0$ for all $n\gs 0$ so that $C^*$ is contained in the stable base locus $\sBL(-\Delta)$. The natural map
	$$
	\Gamma(Y,\mathcal O_Y(-n\Delta-C^*))\rightarrow \Gamma(C^*,\mathcal O_{C^*}\otimes\mathcal O_Y(-n\Delta-C^*))
	$$
	is surjective for $n\gs 2$ by \autoref{Neqq}, and $\mathcal O_{C^*}\otimes\mathcal O_Y(-n\Delta-C^*)$ is generated by global sections for all $n\gs 0$ since $(C^*\cdot(-n\Delta-C^*))=-((C^*)^2)=2$ and $C$ is an elliptic curve.  Thus 
	$I_n\mathcal O_Y=\mathcal O_Y(-n\Delta-C^*)$ is invertible in a neighborhood of $C^*$ for $n\gs 2$.
	The natural map 
	$$
	\Gamma(Y,\mathcal O_Y(-n\Delta-C^*))\rightarrow \Gamma(E,\mathcal O_E\otimes\mathcal O_Y(-n\Delta-C^*))
	$$
	is surjective by \autoref{Neq4} and $\Gamma(E,\mathcal O_E\otimes\mathcal O_Y(-n\Delta-C^*))$ is generated by global sections for all $n\gs 0$ since $\deg(\mathcal O_Y(-n\Delta-C^*))=n-1$. Thus $I_n\mathcal O_Y$ is invertible in a neighborhood of $E$ for $n\gs 2$, and so $I_n\mathcal O_Y$ is invertible on a neighborhood of $\pi^{-1}(\m_R)$.  Thus $I_n\mathcal O_Y=\mathcal O_Y(-n\Delta-C^*)$ is invertible for $n\gs 2$.
	
	Since $((-n\Delta-C^*)\cdot C^*)=2>0$ and $((-n\Delta-C^*)\cdot E)=n-1>0$ for $n\gs 2$, 
    the natural morphism $Y\rightarrow \Proj(R[\II])$ does not contract $C^*$ or $E$ to points. Thus the natural valuations $v_E$ and $v_{C^*}$ associated to $E$ and $C^*$ respectively are the Rees valuations of $I_n$ for $n\gs 2$. Furthermore, 
    $-\Delta$ is a nef Cartier divisor and the stable base locus of $-\Delta$ is $\sBL(-\Delta)=C^*$, so $R[\II]$ is not a finitely generated $R$-algebra by  \cite[Lemma 8.3]{C4}. The stable base locus is defined in \autoref{eqN21}. 
\end{proof}

\begin{example}\label{ExS2} There exists a divisorial filtration of $\m_R$-primary ideals $\II=\{I_n\}_{n\gs 0}$ of a $2$-dimensional  normal local ring $A$ which is essentially of finite type over $\CC$, such that the set of all Rees valuations of all the $I_n$ is an infinite set. Necessarily, $\II$ is non-Noetherian.
\end{example}

\begin{proof}
	We construct the example using a method from  \cite[\S 2]{C5}. Let $C$ be an elliptic curve over $\CC$ and $a,q_1\in C$ be distinct closed points such that $\mathcal O_C(a-q_1)$ has infinite order in the Jacobian of $C$. Let $E\cong \PP^1_{\CC}$ and let $q_2,b,c\in E$ be distinct closed points on $E$. Let $V_1=V(\mathcal O_C(a))$ and $V_2=V(\mathcal O_E(b+c))$ be line bundles over $C$ and $E$, respectively (using the notation of \cite[Exercise II.5.18]{H}). 
	Identify the zero sections of $V_1$ and $V_2$ with $C$ and $E$ respectively, so that 
	$\mathcal O_C\otimes\mathcal O_{V_1}(C)\cong \mathcal O_C(-a)$ and $\mathcal O_E\otimes\mathcal O_{V_2}(E)\cong \mathcal O_E(-b-c).$ Let $x_1,y_1$ be local equations at $q_1$ in $V_1$, where $x_1=0$ is a local equation for the fiber of $V_1\rightarrow C$ through $q_1$ and $y_1=0$ is a local equation of $C$ at $q_1$. Let $x_2,y_2$ be local equations at $q_2$ in $V_2$, where $x_2=0$ is a local equation for the fiber of $V_2\rightarrow E$ through $q_2$ and $y_2=0$ is a local equation of $E$ at $q_2$. 
	
	Using plumbing, as is explained in \cite[p. 83]{La}, we  construct a complex manifold $V$ which contains the curves $C$ and $E$ intersecting in a point $q$. Identify $x_1$ with $y_2$ and $y_1$ with $x_2$ in some small analytic neighborhoods of $q_1$ and $q_2$ to obtain a complex manifold $V$ which is a neighborhood of $C+E$ with $q_1$ and $q_2$ identified to a point $q$ on $V$. By construction,
	$\mathcal O_C\otimes\mathcal O_V(C)\cong \mathcal O_C(-a)$ and $\mathcal O_E\otimes\mathcal O_V(E)\cong \mathcal O_E(-b-c)$ and $C\cdot E=q$. The intersection matrix
	\begin{equation}\label{eqN11}
	\left(\begin{array}{cc}
		(C^2)&(C\cdot E)\\
		(E\cdot C)&(E^2)
	\end{array}\right)
	=
	\left(\begin{array}{cc}
		-1&1\\1&-2
	\end{array}\right)
	\end{equation}
	of $C$ and $E$ is negative definite, so by Grauert's contraction criterion \cite{Ga}, there is a bimeromorphic holomorphic map $g:V\rightarrow W$, where $W$ is a normal complex analytic surface and $g$ is the contraction of $C$ and $E$ to a point $p$ on $W$. 
	

Let $R=\mathcal O_{W,p}$, an analytic local ring. Since $R$ has an isolated singularity, by \cite[Theorem 1]{Hir} or \cite[Theorem A]{CS} there exists a two dimensional local domain $(A,\m_A)$ which is essentially of finite type over $\CC$ such that $\hat A\cong \hat R$.  Thus $R\cong A^{\rm an}$ by \cite[Corollary 1.6]{Art}.  Moreover, this isomorphism can be chosen so that it is induced by an isomorphism of a ring of convergent power series. 
That is, there exists a local ring $B=\CC[x_1,\ldots,x_n]_{\fm}$ with maximal ideal $\fm_B$, 
where $\CC[x_1,\ldots,x_n]$ is the polynomial ring with maximal homogeneous ideal $\fm=(x_1,\ldots,x_n)$, and ideals $I\subset B$ and $J\subset \hat B$ with $A\cong B/I$ and $\hat R\cong \hat B/J$, and such that there exists a $\CC$-isomorphism $\sigma$ of $\hat B$ such that  $\sigma(\hat I)=J$, so that $\sigma$ induces an isomorphism of $\hat A$ and $\hat R$.

The ring $A$ is normal since $\hat A$ is normal. So, 
$A$ is the local ring of a closed point $p'$ on a normal complex projective surface $V'$. Let $Z'\rightarrow V'$ be a projective birational morphism which is a resolution of singularities. Let $\alpha:(Z')^h\rightarrow (V')^h$ be the induced analytic morphism of complex analytic spaces. There exist analytic neighborhoods $U_p$ of $p$ in $W$ and $U_{p'}$ of $p'$ in $V'$ with an analytic isomorphism $\lambda:U_{p'}\rightarrow  U_{p}$, such that $\alpha^{-1}(U_{p'}\setminus \{p'\})\rightarrow U_{p'}\setminus\{p'\}$ is an isomorphism. Now $(\lambda\alpha)^{-1}(U_p)\rightarrow U_p$ and $g^{-1}(U_p)\rightarrow U_p$ are resolutions of singularities of $U_p$, which are isomorphisms over $U_p\setminus\{p\}$.

A $(-1)$-curve on a nonsingular complex analytic surface $S$, or on a nonsingular finite type surface $S$ over $\CC$, is a curve $C$ such that $C\cong \PP^1$ and $(C^2)=-1$ (equivalently $C$ is a proper curve over $\CC$ such that $(C^2)<0$ and $(K_S\cdot C)<0$,  where $K_S$ is a canonical divisor on $S$).

 Now $g^{-1}(U_p)\rightarrow U_p$ is the minimal resolution of singularities of $U_p$ since $g^{-1}(U_p)$ contains no $(-1)$-curves (the minimal resolution of singularities of a complex analytic surface is defined on \cite[page 85]{BPV}).
	Thus we have an analytic morphism 
	$(\lambda\alpha)^{-1}(U_p)\rightarrow g^{-1}(U_p)$ which is a contraction of a sequence of $(-1)$-curves that factors 
	$(\lambda\alpha)^{-1}(U_p)\rightarrow U_p$ by \cite[Theorem III.6.2 and Theorem III.6.3]{BPV}. 
	
 A $(-1)$-curve $F$ in $(\lambda\alpha)^{-1}(U_p)$ is a $(-1)$-curve on $Z'$, so we may blow down $F$ on $Z'$ to get a nonsingular surface $Z^*$ which is a nonsingular projective variety by Castelnuovo's criterion, \cite[Theorem V.5.7]{H}. 
 The birational map $Z^*\dashrightarrow V'$ is a morphism by Zariski's main theorem, \cite[Theorem V.5.2]{H}.  Let $\beta:(Z^*)^h\rightarrow (V')^h$ be the induced analytic morphism. The morphism
 $(\lambda\alpha)^{-1}(U_p)\rightarrow (\lambda\beta)^{-1}(U_p)$ is the contraction of $F$ and there is an induced morphism $(\lambda\beta)^{-1}(U_p)\rightarrow g^{-1}(U_p)$ since $ g^{-1}(U_p)\rightarrow U_p$ is the minimal resolution of singularities of $U_p$.
 Iterating, after a finite number of blowdowns of $(-1)$-curves, we obtain a nonsingular projective surface $X'$, with a birational morphism $X'\rightarrow V'$ such that $\pi:X:= X'\times_{V'}\Spec(A)\rightarrow \Spec(A)$ is such that $\pi^{-1}(\m_A)=C+E$ ,   and $C$ and $E$ have the intersection matrix \autoref{eqN11}. 

	Let $\Delta=C+E$, an exceptional divisor on $X$, and define the divisorial filtration of $\m_A$-primary ideals $\II=\{I_n\}_{n\gs 0}$ of $A$, where $I_n=\Gamma(X,\mathcal O_X(-n\Delta))$. We have
	$$\mathcal O_C\otimes\mathcal O_X(-\Delta)\cong \mathcal O_C(a-q) \qquad \text{and} \qquad\mathcal O_E\otimes\mathcal O_X(-\Delta)\cong \mathcal O_E(b+c-q)\cong\mathcal O_{\PP^1}(1),$$ and a short exact sequence for each $n>0$,
	$$
	0\rightarrow \mathcal O_X(-n\Delta-C)\rightarrow \mathcal O_X(-n\Delta)\rightarrow \mathcal O_X(-n\Delta)\otimes\mathcal O_C\rightarrow 0.
	$$
	Note that $\Gamma(C,\mathcal O_C\otimes\mathcal O_X(-n\Delta))=0$
    since 
    $$
    \mathcal O_C\otimes\mathcal O_X(-n\Delta)\cong\mathcal O_C(n(a-q))=\mathcal O_C(n(a-q_1))
    $$
    has infinite order in the Jacobian of $C$. 
    Thus  we have an identification $\Gamma(X,\mathcal O_X(-n\Delta))=\Gamma(X,\mathcal O_X(-n\Delta-C))$ for each $n>0$, and so  $C\subset \sBL(-\Delta)$.
    The stable base locus is defined in \autoref{eqN21}.
	
	For integers $m\gs 0$ and $n\gs 2$, we have that
	\begin{equation}\label{eq2E4}
		H^1(E,\mathcal O_E\otimes\mathcal O_X(-(m+1)C-mE-n\Delta-2C))=0
	\end{equation}
	since $$\deg(\mathcal O_E\otimes\mathcal O_X(-(m+1)C-mE-n\Delta-2C))= m-3+n,$$
     and for $m\gs 0$ and $n\gs 2$,
	\begin{equation}\label{eq2E5}
		H^1(C,\mathcal O_C\otimes\mathcal O_X(-mC-mE-n\Delta-2C))=0
	\end{equation}
	since $\deg(\mathcal O_C\otimes\mathcal O_X(-mC-mE-n\Delta-2C))=2>0$.

    For every integer $m\gs 0$ we have short exact sequences 
    \begin{align}\label{eq2E1}
    &0\rightarrow \O_E\otimes\mathcal O_X(-(m+1)C-mE)\rightarrow \mathcal O_{(m+1)C+(m+1)E}\rightarrow \mathcal O_{(m+1)C+mE}\rightarrow 0\\
   \nonumber&0\rightarrow \mathcal O_{C}\otimes\mathcal O_X(-mC-mE)\rightarrow \mathcal O_{(m+1)C+mE}\rightarrow \mathcal O_{mC+mE}\rightarrow 0. 
    \end{align}
	Tensoring \autoref{eq2E1} with $\mathcal O_X(-n\Delta-2C)$, taking cohomology, and using 
	\autoref{eq2E4} and \autoref{eq2E5}, we obtain 
	$$
	H^1(X,\mathcal O_{mC+mE}\otimes\mathcal O_X(-n\Delta-2C))=0
	$$
	for all $m\gs 0$ and $n\gs 2$. Thus for all $n\gs 2$,
	\begin{equation}\label{eq2E2}
		H^1(X,\mathcal O_Y(-n\Delta-2C))=\lim_{\leftarrow}H^1(X,\mathcal O_{mC+mE}\otimes\mathcal O_X(-n\Delta-2C))=0
	\end{equation}
	by  \cite[Theorem III.11.1]{H}, where the inverse limit is over $m\in \ZZ_{>0}$.

    Now $\mathcal O_C\otimes\mathcal O_X(-n\Delta-C)\cong \mathcal O_C(n(a-q)+a)$. Since $\mathcal O_C(a-q)$ has infinite order in the Jacobian of $C$, we have by the Riemann-Roch theorem that for $n\gs 1$ there exists a unique closed point $p_n\in C$ such that $\mathcal O_C\otimes\mathcal O_X(-n\Delta-C)\cong\mathcal O_C(p_n)$ and the $p_n$ are all distinct. 
	By \autoref{eq2E2}, for all $n$ we have a surjection
	\begin{equation}\label{eq2E3}
		\Gamma(X,\mathcal O_X(-n\Delta-C))\rightarrow \Gamma(C,\mathcal O_C\otimes\mathcal O_X(-n\Delta-C))\cong \Gamma(C,\mathcal O_C(p_n)),
	\end{equation}
	where $\dim_{\CC}\big(\Gamma(C,\mathcal O_C(p_n))\big)=1$  by the Riemann-Roch Theorem. 
	Write $I_n\mathcal O_X=\mathcal I_n\mathcal O_X(-n\Delta-C)$, where $\mathcal I_n$ is an ideal sheaf on $X$. 
    Since $\Gamma(X,\mathcal O_X(-n\Delta-C))=\Gamma(X,\mathcal O_X(-n\Delta))$, we have that $\Gamma(X,\mathcal O_X(-n\Delta-C))\mathcal O_X=\mathcal I_n\mathcal O_X(-n\Delta-C)$.

   Let $\sigma\in \Gamma(C,\mathcal O_C(p_n))$ be such that $\Div(\sigma)=p_n$ (with the notation of \autoref{SecCart}).   Then $\sigma$ induces an inclusion $\mathcal O_C\xrightarrow{\sigma}\mathcal O_C(p_n)$ so that there exists a generator $t$ of $\m_{p_n}$ where $\fm_{p_n}$ is the maximal ideal of $\mathcal O_{C,p_n}$,  such that 
$\mathcal O_{C,p_n} \xrightarrow{\sigma}\mathcal O_C(p_n)_{p_n}$ is the map
    $
    \mathcal O_{C,p_n} \xrightarrow{t}\frac{1}{t}\mathcal O_{C,p_n}.
    $ We have
    $p_a(C)=1$, since $C$ is an elliptic curve. Thus $h^0(\mathcal O_C(p_n))=1$ by the Riemann-Roch theorem, recalled in \autoref{SecRR}. Therefore the natural inclusion
    $
    \Gamma(C,\mathcal O_C)\stackrel{\sigma}{\rightarrow}\Gamma(C,\mathcal O_C(p_n))
    $ 
    is an isomorphism. Thus $\Gamma(C,\mathcal O_C(p_n))\mathcal O_{C,p_n}=\m_{p_n}\mathcal O_{C}(p_n)_{p_n}$
 and $\Gamma(C,\mathcal O_C(p_n))\mathcal O_{C,\alpha}=\mathcal O_C(p_n)_{\alpha}$ for $\alpha\in C\setminus\{p_n\}$. 
   By \autoref{eq2E3},
\begin{align*}
\mathcal I_n\mathcal O_{C}(p_n)_{p_n}=\mathcal I_n\mathcal O_X(-n\Delta-C)\mathcal O_{C,p_n}
&=\Gamma(X,\mathcal O_X(-n\Delta-C))\mathcal O_{C,p_n}\\
&= \Gamma(C,\mathcal O_C(p_n))\mathcal O_{C,p_n}=\m_{p_n}\mathcal O_{C}(p_n)_{p_n}
\end{align*}
and similarly, $\mathcal I_n\mathcal O_C(p_n)_{\alpha}=\mathcal O_C(p_n)_{\alpha}$ for $\alpha\in C\setminus\{p_n\}$. 
Thus 
$\mathcal I_n\subset 
\mathcal J_{p_n}$, where 
$\mathcal J_{p_n}$ is the ideal sheaf of $p_n$ on $X$, and  further, by \autoref{eq2E3}, $\mathcal I_{n,\alpha}=\mathcal O_{X,\alpha}$ for $\alpha\in C\setminus \{p_n\}$. In particular, $q\not\in \BL(\Gamma(X,\mathcal O_X(-n\Delta-C)))$ for $n\gg 0$. 
    Therefore  for $n\gg 0$, $E\not\subset \BL(\Gamma(X,\mathcal O_X(-n\Delta-C))$ and then $\Supp(\mathcal O_X/\mathcal I_n)$ is a finite set of points, which includes $p_n$. Thus for each $n\gg0$, there exists a Rees valuation $v_n$ of $I_n$ whose center on $X$ is $p_n$, and so the $v_n$ are distinct (inequivalent) valuations for $n\gg 0$.
	Thus the set of all Rees valuations of all the $I_n$ is an infinite set. 
    The non-finite generation of $A[\II]$  follows  from \autoref{prop:Rees_finite}. Alternatively, it follows from \cite[Lemma 8.3]{C4} since 
    the stable base locus of $-\Delta$ is $\sBL(-\Delta)=C$.
\end{proof}

\begin{center}
	{\it Acknowledgments}
\end{center}

We acknowledge support by the NSF Grant  DMS \#1928930, while the authors were in residence at the Simons Laufer Mathematical Science Institute (formerly MSRI) in Berkeley, California, during the Spring 2024 semester.

\bibliographystyle{plain}

\end{document}